\theoremstyle{plain}
\newtheorem{theorem}{Theorem}[section]
\newtheorem{lemma}[theorem]{Lemma}
\newtheorem{corollary}[theorem]{Corollary}
\newtheorem{proposition}[theorem]{Proposition}
\theoremstyle{definition}
\newtheorem{definition}[theorem]{Definition}
\newtheorem{example}[theorem]{Example}
\theoremstyle{remark}
\newtheorem{remark}{Remark}
\newtheorem{chained-construction}{{\bf Chained one-dimensional double extension construction}}
\newcommand{\fn}{\mathfrak{n}}
\newcommand{\fa}{\mathfrak{a}}
\newcommand{\fL}{\mathfrak{L}}
\newcommand{\ku}{\mathbb{K}}
\newcommand\blank{{\mkern 2mu\cdot\mkern 2mu}}
\DeclareMathOperator{\cdim}{codim}
\DeclareMathOperator{\ad}{ad} 
\DeclareMathOperator{\der}{Der} 
\DeclareMathOperator{\spa}{span} 
\DeclareMathOperator{\im}{Im} 
\DeclareMathOperator{\inner}{Inner}
\DeclareMathOperator{\alt}{Alt}
\DeclareMathOperator{\sgn}{sgn}
\DeclareMathOperator{\rank}{rank}
\DeclareMathOperator{\rad}{Rad}
\let\oldforall\forall
\renewcommand{\forall}{\oldforall \, }
\let\oldexist\exists
\renewcommand{\exists}{\oldexist \: }
\begin{document}

\title{Equivalent constructions of nilpotent quadratic Lie algebras}

\author{
	\name{Pilar Benito\textsuperscript{a}\thanks{CONTACT Pilar Benito. Email: pilar.benito@unirioja.es.} and Jorge Rold\'an-L\'opez\textsuperscript{a}}
	\affil{\textsuperscript{a}Dpto. de Matem\'aticas y Computaci\'on, Universidad de La Rioja, Logro\~no, La Rioja, Spain}
}

\maketitle
\begin{abstract}
	The double extension and the $T^*$-extension are classical methods for constructing finite dimensional quadratic Lie algebras. The first one gives an inductive classification in characteristic zero, while the latest produces quadratic non-associative algebras (not only Lie) out of arbitrary ones in characteristic different from $2$. The classification of quadratic nilpotent Lie algebras can also be reduced to the study of free nilpotent Lie algebras and their invariant forms. In this work we will establish an equivalent characterization among these three construction methods. This equivalence reduces the classification of quadratic $2$-step nilpotent to that of trivectors in a natural way. In addition, theoretical results will provide simple rules for switching among them.
\end{abstract}

\begin{keywords}
	Lie algebra; quadratic algebra; nilpotent; double-extension; $T^*$-extension; $n$-quadratic family; trivector
\end{keywords}

\section{Introduction}
Quadratic Lie algebras, also named as metrisable, were introduced in 1957 (see~\cite{Tsou_Walker_1957}) as real Lie algebras of Lie groups admitting a  Riemannian metric invariant under all translations of the group. In fact, according to \cite[Lemmas 7.1 and 7.2]{milnor1976curvatures} (see also \cite[Lemma 2.1]{medina1985groupes}), the connected Lie groups admitting a bi-invariant Riemannian metric are those Lie groups for which their Lie algebras are quadratic. In \cite{Tsou_Walker_1957} several decomposition and existence theorems are given, and it is shown that every metrisable algebra decomposes as an orthogonal sum of an abelian algebra and a finite number of non-decomposable reduced ones. The family of quadratic Lie algebras is quite large and contains reductive Lie algebras, and also infinitely many solvable examples. Their structure has essential patterns which can be used to decode the structure of some Lie groups. Riemannian Geometry makes this class of algebras visible, but they also play an important role in many other branches of mathematics and physics from Cartan's Criterion up to completely integrable Hamiltonian systems (see \cite[Section 1]{bordemann1997nondegenerate}).

A bilinear form $\varphi$ on a non-associative algebra $A$ with product $xy$ that satisfies,
\begin{equation}\label{eq:invariante}
	\varphi(xy,z)=\varphi(x,yz)\quad \forall x,y,z\in A,
\end{equation}
is called invariant. If the invariant bilinear form $\varphi$ is non-degenerated, the pair $(A,\varphi)$ is named \emph{pseudo-quadratic} algebra and \emph{quadratic} if, in addition, $\varphi$ is symmetric. In the literature they appear also named as, metric, metrised, metrisable (usual names for algebras over the real field), orthogonal, regular quadratic, quassi-classical or symmetric self-dual.

Double extensions and $T^*$-extensions are classical construction methods for finite dimensional quadratic Lie algebras. The first one gives an inductive classification of quadratic Lie algebras in characteristic zero, while the latest one produces quadratic non-associative algebras of dimension even (not only Lie) out of arbitrary ones in characteristic different from $2$. In the case of nilpotent Lie algebras, a classification scheme based on free nilpotent algebras and their invariant forms was introduced in \cite{benito2017free}.

The double extension process appears during the 1980s in several independent works of Keith and Hofmann (see~\cite{keith1984invariant}), Favre and Santharoubane and Medina and Revoy (see~\cite{favre_santharoubane_1987,medina1985algebres}). According to \cite{favre_santharoubane_1987}, this procedure follows the main ideas that V. G. Kac had written in several exercises for his students (see~\cite[Exercise 2.10 and 2.11]{kac1990infinite}). In Exercise 2.10, the double one-dimensional extension is defined, and the fact that every indecomposable solvable quadratic algebra of dimension $n+2$ can be obtained from a quadratic algebra of dimension $n$ is established in Exercise 2.11 (see \cite[Proposition 2.9]{favre_santharoubane_1987} for a complete proof). In 1997, Bordemann introduced the $T^*$-extension technique that can be applied to all known classes of non-associative algebras over fields of characteristic different from $2$. The method produces quadratic algebras of dimension even and Witt index a half of its dimension. From \cite[Theorem 3.2]{bordemann1997nondegenerate}, $T^*$-extensions are just quadratic non-associative algebras of dimension $2n$ that contain an isotropic subspace $U$ of dimension $n$ such that $U^2=0$. 
Both methods are good to produce examples, but they present difficulties when dealing with the classification problem. In the 2000s, the notion of a \emph{quadratic extension} was introduced as an attempt to understand general pseudo-Riemannian symmetric spaces (see \cite{kath2006metric} and \cite{kath2009structure}). Quadratic extensions are close relatives of double extensions.

A Lie algebra $A$ such that is centre is contained in its derived algebra is said \emph{reduced}. The pair $(r,s) = (\dim A^2, \dim Z(a))$ is called the \emph{type of the algebra}.

If the Lie algebra is quadratic, by orthogonality, its dimension is just $r+s$. In 1962, S. T. Tsou (see~\cite{tsou1962xi}) established an existence theorem for real quadratic Lie algebras of arbitrary type. The proof of this result, that was announced in \cite[Section 7]{Tsou_Walker_1957}), involves structure constants, trivectors and solutions of nonlinear systems of equations, so multilinear algebra. These ideas are in the base of the proposed scheme, as well as results on the classification of quadratic nilpotent Lie algebras given in \cite{benito2017free}. In the case of quadratic nilpotent algebras of nilindex $2$ ($2$-step nilpotent in the sequel), elemental multilinear techniques lead in \cite{benito2019quadratic} to the notion of \emph{$n$-quadratic family of matrices} and produce a computational algorithm to build this type of algebras.

In this paper, we will focus on multilinear tools to provide a constructive equivalence theorem (Theorem \ref{thm:equivalence}) that relates multi-step one-dimensional double extensions, $T^*$-extensions and $n$-quadratic families. This equivalence reduces, in a direct and natural way, the classification up to isometries of quadratic $2$-step nilpotent Lie algebras to that of $3$-alternating forms up to equivalence (Theorem \ref{thm:trivector-classification}) following the ideas of \cite{noui1997algebres}. Our equivalence also shows that invariant forms of the subclass of reduced quadratic $2$-step are hyperbolic. Even more, the class of quadratic $2$-step Lie algebras agree with the class of $T^*$-extensions of abelian Lie algebras given by non-degenerate (equivalently linearly surjective) $2$-cocycles (Corollary \ref{cor:reduced}). This assertion collects and expands Proposition 11 in \cite{Duong_2013}.

The paper splits into four sections including this introduction. In Section 2, we introduce main terminology on quadratic Lie algebras. Subsections \ref{sub:double}, \ref{sub:Tstar} and \ref{sub:quadratic_families} describe the construction methods, and give examples and patterns. Starting with the trivial quadratic algebra $(A_0=0, \varphi_0=0)$, Subsection \ref{sub:double} includes, as its main tool, a general multi-step hyperbolic extension inductively built by one-dimensional double extensions. Section 3 is devoted to our main result, Theorem \ref{thm:equivalence}. This result gives us $2$-step quadratic Lie algebras by structure constants encoded in cyclic $2$-cocycles and provides a simple rule for switching from one construction method to another. In the final section, the bijective map described in \eqref{eq:biy_coc_tri} yields to the explicit relationship between cyclic $2$-cocycles and trivectors. This bijection leads to the bijection up to isomorphisms of reduced quadratic $2n$-dimensional $2$-step Lie algebras and $n$-rank trivectors up to equivalence. As an easy application, we list the $22$ non-isometrically isomorphic reduced quadratic $2$-step Lie algebras up to dimension $16$ over the complex field.


\section{Generalities and methods}\label{sec:methods}

Along this paper, all vector spaces are considered finite-dimensional over a field $\ku$ of characteristic zero. Although, it is worth mentioning many of these results can be established in characteristic different from two.

In general, $A$ will denote a \emph{Lie algebra} over $\mathbb{K}$ and $\varphi\colon A\times A\to \mathbb{K}$ a \emph{bilinear form}. So, the bracket product $[x,y]$ of $A$ is skew-symmetric and satisfies the \emph{Jacobi identity}, i.e. $[x,[y,z]]+[y,[z,x]]+[z,[x,y]]=0$. From Jacobi and skew symmetry, the left multiplication by $x\in A$, $\ad x=[x,\blank]$, is a derivation of $A$ known as \emph{inner derivation}. When working with Lie algebras, condition (\ref{eq:invariante}) of invariance of $\varphi$ can be rewritten as
\begin{equation}\label{eq:lie_invariant}
	\varphi([x,y],z)+\varphi(y,[x,z]) = 0\quad \forall x,y,z\in A,
\end{equation}
and the inner derivation $\ad x=[x,\blank]$ is said $\varphi$-skew symmetric. In general, a derivation $d$ of $A$ such that $\varphi(d(x),y)+\varphi(x,d(y)) = 0$ will be called \emph{$\varphi$-skew symmetric} and $\der_\varphi A$ will denote the set of skew-symmetric derivations with respect to $\varphi$, while $\der A$ ($\inner A$) will denote the whole set of derivations (inner derivations) of $A$. In this way, we arrive to the notion of quadratic Lie algebra:

\begin{definition} 
	A quadratic Lie algebra $(A, \varphi)$ is a pair formed by a Lie algebra $A$ equipped with a non-degenerate invariant, see condition~\eqref{eq:lie_invariant}, symmetric bilinear form $\varphi\colon A \times A \to \mathbb{K}$.
\end{definition}

For arbitrary subsets $S$ and $T$ of $A$, $[S,T]$ denotes the $\mathbb{K}$-linear span
$[S,T]=\spa \langle [s,t]: s\in S, t\in T\rangle$. The Lie algebra $A$ is said to be $t$-step nilpotent if $A^{t+1}=0$ when $A^{t+1} = [A,A^{t}] = 0$, but $A^{t} \neq 0$, starting  $A^1=A$. We also say that $t$ is the index of nilpotency of $A$ or \emph{nilindex}. The chain of ideals
\begin{equation*}
	A\supseteq A^2 \supseteq \dots \supseteq A^t\supseteq A^{t+1}\supseteq \cdots
\end{equation*}
is the well-known \emph{lower central series} of $A$. Hence, if $A$ is $t$-step nilpotent, this series arrives to $0$ in $t+1$ steps. The \emph{upper central series} of $A$ is inductively defined by $Z_1(A)=Z(A)$ and $Z_{t+1}(A)=\{x\in A: [x,A]\subseteq Z_t(A)\}$. For every quadratic ($A$, $\varphi$), the previous series are related between them by the orthogonal condition $(A^k)^\perp=Z_{k-1}(A)$ (see \cite[Proposition~2.1]{bordemann1997nondegenerate} or \cite{medina1985algebres}). In particular,
\begin{equation}\label{eq:centre_orto_der}
	(A^2)^\perp=Z(A)\quad \mathrm{and\ therefore}\quad \dim A=\dim A^2+\dim Z(A).
\end{equation}

Among all quadratic Lie algebras, we are going to focus on the reduced ones:

\begin{definition}
	A Lie  algebra $A$ is said to be reduced if its centre is contained in its square, i.e. $Z(A)\subseteq A^2$. In the 2-step case, this is equivalent to $Z(A) = A^2$ as the other inclusion comes from being nilpotent.
\end{definition}

\noindent The reason why we limit our classification to this type of algebras is explained in~\cite[Theorem 6.2]{Tsou_Walker_1957} which says:

\begin{theorem}\label{thm:decomposition}
	Any non-reduced and non-abelian quadratic Lie algebra $(A,\varphi)$ decomposes as an orthogonal direct sum of proper ideals, $A = \fn \oplus \fa$, where $\varphi = \varphi_1 \perp \varphi_2$ and $(\fn, \varphi_1)$ is a quadratic reduced Lie algebra and $(\fa,\varphi_2)$ is a quadratic abelian algebra. \hfill $\square$
\end{theorem}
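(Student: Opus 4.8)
The plan is to extract a nondegenerate central ideal using the orthogonality relation $(A^2)^\perp=Z(A)$ recorded in \eqref{eq:centre_orto_der}, and then take its orthogonal complement as the reduced part. Since $A$ is not reduced, $Z(A)\not\subseteq A^2=Z(A)^\perp$, so the restriction of $\varphi$ to $Z(A)$ is degenerate with radical exactly $Z(A)\cap Z(A)^\perp=Z(A)\cap A^2$, a proper subspace of $Z(A)$. First I would choose any vector-space complement $\fa$ of $Z(A)\cap A^2$ inside $Z(A)$. A one-line linear algebra argument shows that the restriction of a symmetric bilinear form to any complement of its radical is nondegenerate, so $\varphi|_{\fa}$ is nondegenerate; and $\fa\neq 0$ precisely because $A$ is non-reduced.

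Next I would verify that $\fa$ and $\fn:=\fa^\perp$ give the asserted decomposition. As $\fa\subseteq Z(A)$, it is an abelian ideal; as $\varphi|_{\fa}$ is nondegenerate, we obtain the orthogonal direct sum $A=\fa\oplus\fa^\perp$ with $\varphi|_{\fn}$ nondegenerate; invariance of $\varphi$ together with $\fa$ being an ideal forces $\fn=\fa^\perp$ to be an ideal, and $[\fn,\fa]\subseteq[A,\fa]=0$, so the bracket respects the splitting and $\varphi=\varphi|_{\fn}\perp\varphi|_{\fa}$. Thus $(\fa,\varphi|_{\fa})$ is an abelian quadratic algebra; non-abelianity of $A$ gives $\fn\neq 0$, so both ideals are proper, and it remains only to check that $(\fn,\varphi|_{\fn})$ is reduced.

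For that final point, I would use that $\fa$ is central in $A$: this gives $A^2=\fn^2\subseteq\fn$, $Z(\fn)=Z(A)\cap\fn$, and $Z(A)=(Z(A)\cap\fn)\oplus\fa$. Comparing this with the defining splitting $Z(A)=(Z(A)\cap A^2)\oplus\fa$, using the inclusion $Z(A)\cap A^2\subseteq Z(A)\cap\fn$ (valid since $A^2\subseteq\fn$) and that both have dimension $\dim Z(A)-\dim\fa$, I conclude $Z(\fn)=Z(A)\cap\fn=Z(A)\cap A^2\subseteq A^2=\fn^2$, which is exactly reducedness.

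There is no deep obstacle here; the argument is bookkeeping around a single orthogonal complement. The point requiring the most care is to see that the complement $\fa\subseteq Z(A)$ is simultaneously nondegenerate for $\varphi$ (so that $A=\fa\oplus\fa^\perp$ splits orthogonally) and chosen so that $\fa^\perp$ absorbs all of $A^2$ and all of the ``reduced'' part of the centre; both facts hinge on the observation that the radical of $\varphi|_{Z(A)}$ is precisely $Z(A)\cap A^2$. One should also dispatch the edge cases, namely $\fa\neq 0$ (from non-reduced) and $\fn\neq 0$ (from non-abelian), to guarantee the ideals are proper.
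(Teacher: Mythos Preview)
Your argument is correct. The paper does not supply its own proof of this theorem: it is stated as a quotation of \cite[Theorem 6.2]{Tsou_Walker_1957} and closed with a bare $\square$, so there is nothing to compare against beyond noting that your route---complementing the radical $Z(A)\cap A^2$ of $\varphi|_{Z(A)}$ inside the centre and taking the orthogonal of that complement---is the standard one and matches the spirit of the original Tsou--Walker argument. All the bookkeeping steps (nondegeneracy of $\varphi|_{\fa}$, that $\fn=\fa^\perp$ is an ideal, the identifications $A^2=\fn^2$ and $Z(\fn)=Z(A)\cap\fn$, and the dimension count giving $Z(A)\cap\fn=Z(A)\cap A^2$) are valid as written.
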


A quadratic algebra $(A,\varphi)$ is called \emph{decomposable} if it contains a proper ideal $I$ that is non-degenerated (i.e. $\varphi\mid_{I\times I}$ is non-degenerate), and \emph{indecomposable} otherwise. So, every abelian algebra of dimension greater than one or non-reduced algebra is decomposable. Any quadratic Lie algebra is the orthogonal direct sum of indecomposable quadratic Lie algebras. This assertion follows easily from the fact that $I$ is an ideal of $A$ if and only if its orthogonal space $I^\perp$ also is.

Finally, we will denote as $\fn_{d,t}$ the free $t$-step nilpotent Lie algebra on $d$ generators (see~\cite{benito2017free} for a formal definition).

There exist several ways to construct quadratic Lie algebras or equivalent structures. We give an overview of three of them, with focus on the 2-step case. We are going to see all methods in the order they first appeared.

From now on, $(A,f)$ will denote a finite-dimensional quadratic Lie algebra, while $A^*$ will be the  dual space of $A$. Moreover, $\ad^*$ will represent the coadjoint representation, so for any $\alpha\colon A\to \ku$, $\alpha \in A^*$ and $a,a' \in A$:
\begin{equation}\label{eq:coadjoint}
	\ad^*(a)(\alpha)(a') = -\alpha([a,a'])=-\alpha\circ \ad a(a').
\end{equation}


\subsection{Double extension}\label{sub:double}

Chronologically, this is the first classical method to construct all quadratic Lie algebras (see~\cite{figueroa1996structure} for a nice presentation). This is an iterative process, simultaneously introduced in the early 1980s by several authors, that allows us to find new quadratic Lie algebras starting from a smaller dimensional one.
The formal description we present here follows from \cite[Theorem 2.2]{bordemann1997nondegenerate}) which explicitly describes a similar result mentioned in \cite[Section~2.2]{medina1985algebres}.

\begin{theorem}\label{thm:doubleExtension}
	Let $(A, f)$ be a finite-dimensional quadratic Lie algebra over a field $\mathbb{K}$. Let $B$ be another finite-dimensional Lie algebra over $\mathbb{K}$ and suppose there is a Lie homomorphism $\phi\colon B \to \der_f(A)$ from $B$ onto the space of all $f$-skew-symmetric derivations of $A$. Denote by $w\colon A\times A \to B^*$ the bilinear skew-symmetric map $(a, a') \mapsto (b \mapsto f(\phi(b)(a),a'))$. Take the vector space direct sum $A_B:= B \oplus A \oplus B^*$ and define the following multiplication for $b$, $b' \in B$, $a$, $a'\in A$, and $\beta, \beta\in B^*$:
	\begin{multline}\label{eq:de_bracket}
		[b + a + \beta, b' + a' + \beta'] :=  [b,b']_B + \phi(b)(a') - \phi(b')(a) + [a,a']_A \\+ w(a, a') + \ad^*(b)(\beta') - \ad^*(b')(\beta).
	\end{multline}
	Moreover, define the following symmetric bilinear form $f_B$ on $A_B$:
	\begin{equation}\label{eq:de_bilinear}
		f_B(b + a + \beta, b' + a' + \beta') := \beta(b') + \beta'(b) + f(a, a').
	\end{equation}
	Then the pair $(A_B, f_B)$ is a quadratic Lie algebra over $\mathbb{K}$ and is called the double extension of $A$ by $(B, \phi)$. \hfill $\square$
\end{theorem}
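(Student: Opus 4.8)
The plan is to verify, in order, that the new bracket on $A_B = B \oplus A \oplus B^*$ is skew-symmetric, that it satisfies the Jacobi identity, that $f_B$ is non-degenerate and symmetric, and finally that $f_B$ is invariant under the new bracket. Skew-symmetry of \eqref{eq:de_bracket} is immediate, component by component: $[\blank,\blank]_B$ and $[\blank,\blank]_A$ are skew, the term $\phi(b)(a') - \phi(b')(a)$ is manifestly antisymmetric under swapping $(b,a) \leftrightarrow (b',a')$, as is $w(a,a') = -w(a',a)$ and the coadjoint piece $\ad^*(b)(\beta') - \ad^*(b')(\beta)$. Symmetry of $f_B$ in \eqref{eq:de_bilinear} is equally clear. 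Non-degeneracy of $f_B$: if $b + a + \beta$ is $f_B$-orthogonal to all of $A_B$, then pairing against arbitrary $a'\in A$ forces $f(a,a')=0$ for all $a'$, so $a=0$ by non-degeneracy of $f$; pairing against arbitrary $b'\in B$ forces $\beta(b')=0$, so $\beta = 0$; pairing against arbitrary $\beta'\in B^*$ forces $\beta'(b)=0$, so $b=0$.

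The substantial computations are the Jacobi identity and the invariance of $f_B$. For invariance one must check $f_B([x,y],z) + f_B(y,[x,z]) = 0$ for $x,y,z$ ranging over the three summands; by trilinearity this reduces to finitely many cases sorted by how many of $x,y,z$ lie in $B$, in $A$, or in $B^*$. The cases involving only $A$ reduce to invariance of $f$ on $A$; the cases mixing $B$ and $A$ use precisely that $\phi(b) \in \der_f(A)$, i.e. $f(\phi(b)(a),a') + f(a,\phi(b)(a')) = 0$; the cases producing a $B^*$-component are handled by the defining identity $w(a,a')(b) = f(\phi(b)(a),a')$ together with the definition \eqref{eq:coadjoint} of $\ad^*$, which makes the $w$-term and the $\ad^*$-term cancel in pairs. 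I expect the bookkeeping here to be routine but lengthy.

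The Jacobi identity is the main obstacle. Writing $J(x,y,z) = [x,[y,z]] + [y,[z,x]] + [z,[x,y]]$, one expands in the three summands and collects the resulting expression by the summand ($B$, $A$, or $B^*$) in which each term lands. The $B$-component of $J$ vanishes because $B$ is a Lie algebra (Jacobi in $B$). The $A$-component vanishes because $A$ is a Lie algebra together with the fact that $\phi$ is a Lie homomorphism, so that $\phi([b,b']_B) = \phi(b)\phi(b') - \phi(b')\phi(b)$ as operators on $A$ and each $\phi(b)$ is a derivation of $A$ — this handles the mixed $B$-on-$A$ terms. The delicate part is the $B^*$-component: here the terms coming from $w$ (after one further application of $\phi(b)$, or after feeding $w(a,a')$ into the coadjoint action of $B$) must cancel against the terms coming from the $\ad^*$ pieces and against $w$ applied to brackets in $A$. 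The key algebraic facts needed are, again, that $\phi(b)$ is an $f$-skew-symmetric derivation and that $\phi$ respects brackets; one rewrites everything in $B^*$ by evaluating on an arbitrary $b'' \in B$, turning the identity into a scalar identity in $f$, in the $\phi(b)$'s, and in the brackets of $A$ and $B$, which then follows from the Jacobi identity of $A$, the derivation property, and the homomorphism property. I would organize this as: (i) show the $B$-part is zero; (ii) show the $A$-part is zero; (iii) evaluate the $B^*$-part on a test element of $B$ and reduce it to an identity already established, namely a combination of invariance of $f$, the derivation rule, and $\phi$ being a homomorphism. Once these three vanishings are in hand, together with the elementary checks above, the pair $(A_B, f_B)$ is a quadratic Lie algebra and the theorem is proved. $\square$
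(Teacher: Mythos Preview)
The paper does not prove this theorem: the $\square$ after the statement marks it as quoted without proof, and the surrounding text attributes it to \cite[Theorem 2.2]{bordemann1997nondegenerate} (and earlier to Medina--Revoy and Kac). So there is no ``paper's own proof'' to compare against.

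Your outline is the standard direct verification and is correct. The only place worth a closer look is the $B^*$-component of the Jacobiator when all three arguments lie in $A$: evaluated on $b\in B$ this becomes the cyclic sum $\sum_{\circlearrowright} f(\phi(b)a_1,[a_2,a_3]_A)$, and its vanishing uses simultaneously that $\phi(b)$ is a derivation of $[\blank,\blank]_A$ \emph{and} $f$-skew (equivalently, that $\phi(b)$ annihilates the alternating $3$-form $g(x,y,z)=f([x,y]_A,z)$). You implicitly cover this under ``derivation rule $+$ invariance of $f$'', but it is the one case that genuinely needs both hypotheses at once, so it is worth writing out explicitly rather than folding into the bookkeeping.
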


So, the double extension, as name suggests, consist of two extensions: it is the semidirect product of two Lie algebras, $B$ and $(A\oplus_w B^*)$, where this last algebra comes from a central extension of algebra $A$. We also note that when double extending a nilpotent Lie algebra, in case the result is nilpotent, we always keep or increase its nilpotency index as next lemma proves.

\begin{lemma}\label{lem:nilpotencyDoubleExtension}
	Let $(A_B,f_B)$ be the double extension of $(A,f)$ by $(B,\phi)$. If $A_B$ is $t$-step nilpotent then $A$ is $n$-step with $n\leq t$.
\end{lemma}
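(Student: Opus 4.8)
The plan is to show that the lower central series of $A$ is, up to the natural identification, controlled by that of $A_B$. First I would recall the multiplication \eqref{eq:de_bracket} and extract from it the key structural fact: the subspace $A \oplus B^*$ is an ideal of $A_B$ (since $\phi(b)$ maps $A$ to $A$, $w$ maps into $B^*$, and $\ad^*(b)$ maps $B^*$ to $B^*$), and moreover $B^*$ is a central ideal of this subalgebra because $[a,a']_A + w(a,a') \in A \oplus B^*$ but the $B^*$-component pairs trivially under further bracketing with $A$ only through $w$. The crucial observation is that for elements $a, a' \in A$, the bracket $[a,a']_{A_B} = [a,a']_A + w(a,a')$, so projecting onto the $A$-summand along $B \oplus B^*$ gives exactly $[a,a']_A$. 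Let $\pi\colon A_B \to A$ denote this projection.

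Next I would prove by induction on $k$ that $\pi\bigl((A_B)^k\bigr) \supseteq A^k$ for all $k \geq 1$; equivalently, that $A^k \subseteq \pi((A_B)^k)$. The base case $k=1$ is immediate since $A \subseteq A_B$. For the inductive step, take a generator $[a, y]_A$ of $A^{k+1}$ with $a \in A$ and $y \in A^k$. By induction there is $z \in (A_B)^k$ with $\pi(z) = y$, and then $[a, z]_{A_B} \in (A_B)^{k+1}$ has $A$-component equal to $[a, \pi(z)]_A = [a,y]_A$, using that the cross terms $\phi(b)(\blank)$ contribute nothing to the $A$-component when the first argument lies in $A$ (it has no $B$-part) and that $w$ and $\ad^*$ land outside $A$. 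Hence $A^{k+1} \subseteq \pi((A_B)^{k+1})$.

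Now the conclusion follows quickly: if $A_B$ is $t$-step nilpotent, then $(A_B)^{t+1} = 0$, so $\pi((A_B)^{t+1}) = 0$, and therefore $A^{t+1} \subseteq \pi((A_B)^{t+1}) = 0$. Thus $A$ is $n$-step nilpotent for some $n \leq t$ (taking $n$ to be the actual nilindex of $A$, which is finite and at most $t$; if $A = 0$ one interprets $n = 0$).

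The only mildly delicate point — and the one I would write out carefully — is the bookkeeping in the inductive step: one must check that when we bracket an element of $A \subseteq A_B$ with an arbitrary element $b' + a' + \beta' \in (A_B)^k$, the $A$-component of the result is precisely $[a,a']_A$ and is unaffected by the $b'$ and $\beta'$ parts. From \eqref{eq:de_bracket}, bracketing $a$ with $b'$ yields $-\phi(b')(a) \in A$, which would spoil the argument, so the real content is to choose the lifted representative $z$ of $y \in A^k$ to lie in $A$ itself, not merely in $A_B$. This is legitimate because the inductive hypothesis should be strengthened to: every element of $A^k$ lifts to an element of $(A_B)^k \cap A$, which holds since $A^k \subseteq A$ and the above computation shows $[a, z]_{A_B} \in A$ whenever $z \in A \cap (A_B)^k$, keeping us inside $A$ at every stage. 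With this strengthened hypothesis the induction closes cleanly and the cross terms never appear.
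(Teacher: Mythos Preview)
Your approach is essentially the paper's: observe that for $a,a' \in A$ one has $[a,a']_{A_B} = [a,a']_A + w(a,a') \in A \oplus B^*$, so vanishing in $A_B$ forces vanishing of the $A$-part, and iterate. There is, however, a slip in your last paragraph. The claim that $[a,z]_{A_B} \in A$ whenever $z \in A$ is false --- you yourself computed two paragraphs earlier that this bracket equals $[a,z]_A + w(a,z)$, which carries a $B^*$-component. Consequently the strengthened hypothesis ``every element of $A^k$ lifts to $(A_B)^k \cap A$'' does not survive the inductive step as written.

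The fix is simply to replace $A$ by $A \oplus B^*$ throughout that paragraph: if $z \in A \oplus B^*$ then $[a,z]_{A_B} = [a,\pi(z)]_A + w(a,\pi(z)) \in A \oplus B^*$ with $A$-component exactly $[a,\pi(z)]_A$, and now the induction closes cleanly. Equivalently and more structurally, $\pi|_{A\oplus B^*}\colon A \oplus B^* \to A$ is a surjective Lie algebra homomorphism with kernel the central ideal $B^*$, so $(A\oplus B^*)^k$ surjects onto $A^k$; since $A\oplus B^*$ is a subalgebra of $A_B$ one has $(A\oplus B^*)^k \subseteq (A_B)^k$, and the conclusion follows immediately.
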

\begin{proof}
	The proof is straightforward. As every $a\in A$ can be seen as even an element of $A_B$, and $[a,a']_{A_B}=[a,a']_A+w(a,a')$. If $[a,a']_{A_B} = 0$, then $[a,a']_A=0$, and $w(a,a')=0$ because the first part lays on $A$, while the second belongs to $B^*$.
\end{proof}
This result cannot be improved as an abelian quadratic Lie algebra can generate a $n$-step nilpotent one, for every $n \in \mathbb{N}$. To see it we have the following example.
\begin{example}
	Let us take the abelian quadratic Lie algebra $(A,f)$ of dimension $2n$ with basis $\{e_{-n}, e_{-n+1}, \ldots, e_{-1},e_1,\ldots,e_n\}$, where
	\begin{equation*}
		f(e_i,e_j) =
		\begin{cases}
			1 & \text{if } |i-j| = n+1,    \\
			0 & \text{if } |i-j| \neq n+1.
		\end{cases}
	\end{equation*}
	Let us consider the $f$-skew-symmetric derivation $d\colon A \to A$ where $d(e_{-i}) = e_{-i+1}$, $d(e_i) = -e_{i-1}$ for $i=2,\ldots,n$ and $d(e_1) = d(e_{-1}) = 0$. Now, we can build the double extension $(A_B, f_B)$ of $(A,f)$ by $(\phi, B)$ where $B = \mathbb{K}b$ and $\phi(b) = d$. This new algebra satisfies
	\begin{equation*}
		(A_B)^t = \spa\langle e_{-n+t-1},\ldots, e_{-1},e_1,\ldots,e_{n-t+1}\rangle.
	\end{equation*}
	Thus, $(A_B)^n$ is the bidimensional ideal linearly generated by $\{e_{-1},e_1\}$ and $(A_B)^{n+1} = 0$. Therefore, $A_B$ is $n$-step nilpotent. \hfill $\square$
\end{example}

Every non-abelian solvable (nilpotent) quadratic Lie algebra $(S,q)$ has a nonzero element $z\in S^2\cap Z(S)$, so $f(z,z)=0$ and
\begin{equation*}
	(A,f)=\left(\frac{(\ku z)^\perp}{\ku z}, f\big|_{\frac{(\ku z)^\perp}{\ku z}}\right)
\end{equation*}
is a solvable (nilpotent) quadratic algebra of dimension $\dim S-2$. According to \cite[Proposition 2.9]{favre_santharoubane_1987}, $(S,q)$ is isometrically isomorphic to a double extension of the algebra $(A,f)$ by a one-dimensional algebra $B=\ku b$. Iterating this one-dimensional process, we get that the class of solvable (nilpotent) quadratic Lie algebras is a direct sum of abelian and one-dimensional double extensions of solvable (nilpotent) ones \cite[Th\'eor\`eme III]{medina1985algebres}. So, although Theorem~\ref{thm:doubleExtension} gives us the general double extension method, for our goal, a subset of these extensions is enough. This leads us to the following definition.

\begin{definition}
	We call $(A_b, f_b)$ the one-dimensional double extension of $(A,f)$ by $(b,d)$ to the double extension $(A_B, f_B)$ of $(A,f)$ by $(B, \phi)$ where $B=\mathbb{K}b$ has dimension~1 and $\phi(b)=d$.
\end{definition}

Now, when considering the vector space $A_b=\ku b\oplus A \oplus \ku \beta$, where $\beta(b)=1$ (dual $1$-form of $b$), the Lie bracket from equation~\eqref{eq:de_bracket} turns into

\begin{equation*}
	[b_ib + a_i + \beta_i\beta, b_jb + a_j + \beta_j\beta] :=  b_id(a_j) - b_jd(a_i) + [a_i,a_j] + f(d(a_i), a_j)\beta,
\end{equation*}
for every scalar $b_i, b_j, \beta_i, b_j\in\mathbb{K}$ and $a_i\in A$.
While the symmetric bilinear form from equation~\eqref{eq:de_bilinear} can be written as
\begin{equation*}
	f_b(b_ib + a_i + \beta_i\beta, b_jb + a_j + \beta_j\beta) := b_i\beta_j + b_j\beta_i + f(a_i, a_j).
\end{equation*}

On the other hand, the derived algebra can be easily computed as
\begin{equation}\label{eq:ab2}
	A_b^2 = \im d + \spa\langle [a_1,a_2]_A + f(d(a_1),a_2)\beta : a_1,a_2 \in A\rangle.
\end{equation}

The description of the centre and, therefore, the reducibility of $A_b$ depends on if $d$ is either an inner or an outer $f$-skew-symmetric derivation of $A$.

\begin{lemma}\label{lem:centre}
	Let $(A_b,f_b)$ be the one-dimensional double extension of $(A,f)$ by $(b,d)$. Then
	\begin{equation*}
		Z(A_b) = (Z(A)\cap\ker d) \oplus B^*
	\end{equation*}
	if and only if $d\notin \inner A$. Otherwise, $d=\ad x$ for some $x\in A$ and
	\begin{equation*}
		Z(A_b) = (Z(A)\cap\ker d) \oplus B^* \oplus \ku (b-x)
	\end{equation*}

\end{lemma}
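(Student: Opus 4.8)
The plan is to compute $Z(A_b)$ directly from the bracket formula for the one-dimensional double extension. Fix the basis-like decomposition $A_b = \ku b \oplus A \oplus \ku\beta$ with $\beta(b)=1$, and recall from the simplified bracket that
\[
[b_i b + a_i + \beta_i\beta,\ b_j b + a_j + \beta_j\beta] = b_i d(a_j) - b_j d(a_i) + [a_i,a_j]_A + f(d(a_i),a_j)\beta.
\]
First I would take an arbitrary element $z = b_0 b + a_0 + \beta_0\beta$ and impose $[z, w] = 0$ for all $w \in A_b$; by linearity it suffices to test $w = b$, $w = a$ (for all $a \in A$), and $w = \beta$ (which contributes nothing). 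Testing against $b$ gives the component $-d(a_0)$ in $A$ together with the term $b_0 d(a) \cdot 0$... more carefully, $[z,b] = -d(a_0) + (\text{terms from } b_0\cdot 0)$; writing it out, $[z,b] = b_0\,d(0) - 1\cdot d(a_0) + [a_0,0]_A + f(d(a_0),0)\beta = -d(a_0)$, so we need $d(a_0) = 0$, i.e. $a_0 \in \ker d$. Testing against a general $a \in A$ gives $[z,a] = b_0 d(a) + [a_0,a]_A + f(d(a_0),a)\beta$; since $d(a_0)=0$ already, the $\beta$-part vanishes, and we are left with the condition $b_0 d(a) + [a_0,a]_A = 0$ for all $a \in A$, equivalently $\ad_A(a_0) = b_0\, d$ as endomorphisms of $A$ (reading $[a_0,a]_A = -[a,a_0]_A$, so $-\ad_A a_0 = b_0 d$, i.e. $b_0 d = \ad_A(-a_0)$; the sign is cosmetic).

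The case split now falls out naturally. If $d \notin \inner A$, then $b_0 d = \ad_A(\text{something})$ forces $b_0 = 0$, and then the condition reduces to $\ad_A(a_0) = 0$, i.e. $a_0 \in Z(A)$, combined with $a_0 \in \ker d$ from the first test. The $\beta$-coefficient $\beta_0$ is free. Hence $z \in (Z(A)\cap\ker d)\oplus B^*$, and conversely any such element is easily checked to be central (it kills $b$ because it lies in $\ker d$, it kills $A$ because $a_0$ is central and $b_0 = 0$, and it kills $\beta$ trivially). This gives the first displayed formula. If instead $d = \ad x$ for some $x \in A$, then $b_0 d = \ad_A(a_0)$ becomes $\ad_A(a_0 - b_0 x) = 0$, i.e. $a_0 - b_0 x \in Z(A)$; writing $a_0 = b_0 x + c$ with $c \in Z(A)$, the element is $z = b_0(b + x) + c + \beta_0\beta$. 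Wait — I should double-check the sign to match the paper's $b - x$: with the convention that testing against $a$ gave $b_0 d(a) = [a, a_0]_A = \ad_A(a)(a_0)$... I would recompute carefully and simply adopt whichever sign makes $d = \ad x$ yield the stated generator $b - x$; this is a bookkeeping matter, not a mathematical one. The remaining constraint $a_0 \in \ker d$ becomes $d(b_0 x + c) = b_0 d(x) + d(c) = 0$; since $d = \ad x$ we have $d(x) = [x,x]_A = 0$ and $d(c) = [x,c]_A = 0$ because $c \in Z(A)$, so this is automatic. Therefore $z$ ranges over $(Z(A)\cap\ker d)\oplus B^* \oplus \ku(b-x)$, and conversely one verifies $b - x$ is central by a direct bracket computation: $[b - x, a] = d(a) - [x,a]_A = d(a) - d(a) = 0$ for all $a\in A$, and $[b-x, b] = -d(-x)\cdot(\text{sign}) = \pm d(x) = 0$, and the $\beta$ terms vanish.

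I do not anticipate a genuine obstacle here; the argument is a finite linear computation. The one point requiring care is the sign bookkeeping in the identification $b_0 d = \pm\,\ad_A(a_0)$ and the resulting shift $b \mapsto b - x$ versus $b + x$ — this depends on the precise sign conventions in \eqref{eq:de_bracket} and \eqref{eq:coadjoint}, so I would fix those conventions explicitly at the start and track signs consistently. A secondary subtlety worth a sentence is the well-definedness of the decomposition in the inner case: the element $x$ with $d = \ad x$ is unique only modulo $Z(A)$, but since $Z(A)\cap\ker d \supseteq Z(A)$ is already absorbed into the first summand (note $Z(A) \subseteq \ker d$ when $d = \ad x$, because $[x,c]=0$ for central $c$, so $Z(A)\cap\ker d = Z(A)$ in this case), the stated direct-sum decomposition is independent of the choice of $x$. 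Finally, I would remark that the "if and only if" in the first equivalence is genuine: the extra line $\ku(b-x)$ is present exactly when $d$ is inner, because $b - x \notin (Z(A)\cap\ker d)\oplus B^*$ (its $b$-component is nonzero), so the two formulas describe genuinely different centres and the dichotomy is sharp.
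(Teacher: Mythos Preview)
Your proposal is correct and follows essentially the same approach as the paper: a direct computation of the centre from the one-dimensional bracket formula, yielding the two conditions $d(a_0)=0$ and $b_0\,d(a)+[a_0,a]_A=0$ for all $a$, followed by the inner/outer case split. The only cosmetic difference is that you test against $b$, $a$, $\beta$ separately (obtaining $a_0\in\ker d$ from the bracket with $b$), whereas the paper brackets against a general element and extracts $d(a_1)=0$ from the $\beta$-component via non-degeneracy of $f$; the remaining analysis and the final note on the ambiguity of $x$ modulo $Z(A)\cap\ker d$ match the paper exactly. Your sign worry is harmless: from $b_0 d(a)+[a_0,a]_A=0$ one gets $\ad(a_0)=-b_0 d$, so $d=\ad x$ forces $a_0+b_0 x\in Z(A)$ and hence $z=b_0(b-x)+c+\beta_0\beta$, matching the stated generator $b-x$.
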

\begin{proof}

	If we calculate the centre we obtain
	\begin{align*}
		Z(A_b) & = \{b_1b+a_1+\beta_1\beta : b_1d(a_2) - b_2d(a_1)+[a_1,a_2]=0,                                                            \\
		       & \hspace{5.5cm} f(d(a_1),a_2) = 0 \ \forall a_2\in A, b_2 \in \mathbb{K}\}                                                 \\
		       & = \{b_1b+a_1+\beta_1\beta : b_1d(a_2) + [a_1,a_2]=0, d(a_1) = 0 \ \forall a_2\in A\}                                      \\
		       & = \{a_1+\beta_1\beta : [a_1,a_2]=0, d(a_1) = 0 \ \forall a_2\in A\}                                                       \\
		       & \hspace{3.6cm}+ \spa\left\langle b+\frac{1}{b_1}\,a_1: d = -\ad\!\left(\frac{1}{b_1}\,a_1\right), b_1 \neq 0\right\rangle \\
		       & = \begin{cases}
			(Z(A)\cap \ker d) \oplus B^*                 & \text{ if } d\notin\inner A, \\
			(Z(A)\cap \ker d) \oplus B^* \oplus \ku(b-x) & \text{ if } d = \ad x.
		\end{cases}
	\end{align*}

	Note that $d=\ad x =\ad y$ if and only if $x-y \in Z(A)\cap \ker d$.
\end{proof}

\begin{corollary}
	Let $(A_b,f_b)$ be the one-dimensional double extension of $(A,f)$ by $(b,d)$. Then $(A_b,f_b)$ is reduced if and only if $d\notin \inner A$, $A_b^2=(\im d +A^2)\oplus B^*$ and $Z(A)\cap\ker d \subseteq \im d +A^2$.
\end{corollary}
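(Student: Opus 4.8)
The plan is to read everything off from Lemma~\ref{lem:centre}, which describes $Z(A_b)$, and from formula~\eqref{eq:ab2}, which describes $A_b^2$, by comparing the three homogeneous components of the decomposition $A_b=\ku b\oplus A\oplus B^*$. The single fact I will use repeatedly is that \eqref{eq:ab2} forces $A_b^2\subseteq A\oplus B^*$, and in fact $A_b^2\subseteq(\im d+A^2)\oplus B^*$: no element of the derived algebra has a nonzero $b$-component, the sum being direct because $\im d+A^2\subseteq A$ while $B^*=\ku\beta$ is disjoint from $A$.

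\emph{Forward implication.} Assume $(A_b,f_b)$ is reduced, i.e.\ $Z(A_b)\subseteq A_b^2$. First I would rule out $d\in\inner A$: if $d=\ad x$, then by Lemma~\ref{lem:centre} the element $b-x$ lies in $Z(A_b)\subseteq A_b^2$, which is impossible since its $b$-component is $1\neq 0$. Hence $d\notin\inner A$, and Lemma~\ref{lem:centre} gives $Z(A_b)=(Z(A)\cap\ker d)\oplus B^*$; in particular $B^*\subseteq A_b^2$ and $Z(A)\cap\ker d\subseteq A_b^2$. Using $\beta\in B^*\subseteq A_b^2$, each generator $[a_1,a_2]_A+f(d(a_1),a_2)\beta$ of $A_b^2$ in \eqref{eq:ab2} can be corrected by a scalar multiple of $\beta$ to show $[a_1,a_2]_A\in A_b^2$, so $A^2\subseteq A_b^2$; combined with $\im d\subseteq A_b^2$, $B^*\subseteq A_b^2$, and the reverse inclusion noted above, this yields $A_b^2=(\im d+A^2)\oplus B^*$, the second condition. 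Intersecting this description with $A$ gives $A_b^2\cap A=\im d+A^2$; since $Z(A)\cap\ker d$ lies in $A$ and in $A_b^2$, we conclude $Z(A)\cap\ker d\subseteq\im d+A^2$, the third condition.

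\emph{Converse.} Assume $d\notin\inner A$, $A_b^2=(\im d+A^2)\oplus B^*$, and $Z(A)\cap\ker d\subseteq\im d+A^2$. By Lemma~\ref{lem:centre}, $Z(A_b)=(Z(A)\cap\ker d)\oplus B^*$. The second hypothesis gives $B^*\subseteq A_b^2$, and the third gives $Z(A)\cap\ker d\subseteq\im d+A^2\subseteq A_b^2$; hence $Z(A_b)\subseteq A_b^2$, i.e.\ $(A_b,f_b)$ is reduced.

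I do not anticipate a genuine obstacle: the whole argument is bookkeeping in the decomposition $A_b=\ku b\oplus A\oplus B^*$, driven by Lemma~\ref{lem:centre} and \eqref{eq:ab2}. The only step worth isolating as a sub-claim is that, once $B^*\subseteq A_b^2$, one automatically has $A_b^2=(\im d+A^2)\oplus B^*$ and therefore $A_b^2\cap A=\im d+A^2$, since this identity is exactly what turns membership of $Z(A)\cap\ker d$ in $A_b^2$ into the third stated condition.
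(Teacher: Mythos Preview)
Your proof is correct and follows exactly the approach the paper indicates: it applies Lemma~\ref{lem:centre} and equation~\eqref{eq:ab2} and compares components in $A_b=\ku b\oplus A\oplus B^*$. The paper's own proof is a one-line remark that the result is straightforward from these two ingredients; you have simply written out the bookkeeping the paper leaves implicit.
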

\begin{proof}
	Since $A_b$ is reduced if and only if $Z(A_b) \subseteq A_b^2$, the result is straight forward applying Lemma~\ref{lem:centre} and equation~\eqref{eq:ab2}.
\end{proof}

Now, as the aim of this paper is to study the 2-step case, we will see which restrictions do $A$ and $d$ need to satisfy in the following proposition.

\begin{proposition}\label{prop:2step}
	Let $(A_b,f_b)$ be the one-dimensional double extension of a quadratic Lie algebra $(A,f)$ by $(b,d)$. Then  $A_b$ is 2-step if and only if
	\begin{equation*}
		0 \neq \im d + A^2\subseteq Z(A) \cap \ker d.
	\end{equation*}
\end{proposition}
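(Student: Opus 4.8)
The plan is to characterize the $2$-step nilpotency of $A_b$ directly from the explicit bracket formula, using the description of $A_b^2$ in~\eqref{eq:ab2} together with Lemma~\ref{lem:centre}. Recall that $A_b$ is $2$-step nilpotent precisely when $A_b^2 \neq 0$ and $[A_b,A_b^2] = 0$, i.e. $A_b^2 \neq 0$ and $A_b^2 \subseteq Z(A_b)$.

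First I would compute $[A_b, A_b^2]$. By~\eqref{eq:ab2}, $A_b^2 = \im d + \spa\langle [a_1,a_2]_A + f(d(a_1),a_2)\beta : a_1,a_2\in A\rangle$, and since $B^* = \ku\beta$ is central in $A_b$, it suffices to bracket $A_b$ against $\im d + A^2$ (the $\beta$-component contributes nothing). Bracketing $b$ against $\im d + A^2$ gives $d(\im d + A^2) = \im d^2 + d(A^2)$ together with a $\beta$-term, and bracketing $A$ against $\im d + A^2$ gives $[A, \im d + A^2]_A$ plus a $\beta$-term. Hence $[A_b, A_b^2] = 0$ forces in particular $[A, \im d + A^2]_A = 0$ and $d(\im d + A^2) = 0$; the first means $\im d + A^2 \subseteq Z(A)$ and the second means $\im d + A^2 \subseteq \ker d$. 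Conversely, if $\im d + A^2 \subseteq Z(A)\cap\ker d$, then these two conditions kill the $A$-valued parts of all the above brackets, and one checks the residual $\beta$-terms vanish as well: the $\beta$-terms have the form $f(d(\blank), \blank)$ evaluated on elements of $\im d + A^2 \subseteq \ker d$, so $d$ of them is zero. Thus $[A_b, A_b^2] = 0 \iff \im d + A^2 \subseteq Z(A)\cap\ker d$.

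It remains to translate the nonvanishing condition $A_b^2 \neq 0$. From~\eqref{eq:ab2}, if $\im d + A^2 = 0$ then every generator $[a_1,a_2]_A + f(d(a_1),a_2)\beta$ of $A_b^2$ already has vanishing $A^2$-part, and $f(d(a_1),a_2) = 0$ for all $a_1,a_2$ because $d(a_1) \in \im d = 0$; hence $A_b^2 = 0$. Conversely, if $\im d + A^2 \neq 0$, then clearly $A_b^2 \supseteq \im d + A^2 \neq 0$ up to the obvious identification inside $A_b$ — more precisely, $A_b^2$ surjects onto $\im d + A^2$ under the projection $A_b \to A$, so it is nonzero. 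Therefore $A_b^2 \neq 0 \iff \im d + A^2 \neq 0$, and combining the two equivalences yields the claimed statement.

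I do not expect a serious obstacle here; the statement is essentially a bookkeeping consequence of the bracket formula. The one point requiring a little care is the handling of the $B^*$-components (the $\beta$-terms) when checking $[A_b, A_b^2] = 0$: one must confirm that once the hypothesis $\im d + A^2 \subseteq \ker d$ is imposed, every form $f(d(x), y)$ with $x \in \im d + A^2$ vanishes, so no spurious central-but-nonzero contribution survives. Using $f$-skew-symmetry of $d$ one could alternatively write $f(d(x),y) = -f(x, d(y))$, but the direct observation $d(x) = 0$ is cleaner. The other mild subtlety is to note that $Z(A)\cap\ker d$ and $A_b^2$ live in different ambient spaces, so the inclusion $\im d + A^2 \subseteq Z(A)\cap\ker d$ is an inclusion of subspaces of $A$, which is exactly how~\eqref{eq:ab2} and Lemma~\ref{lem:centre} present them.
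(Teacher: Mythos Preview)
Your proposal is correct and follows essentially the same route as the paper: both arguments compute $A_b^3=[A_b,A_b^2]$ using the explicit bracket and the description of $A_b^2$ in~\eqref{eq:ab2}, then read off the conditions $\im d+A^2\subseteq Z(A)\cap\ker d$ and handle $A_b^2\neq 0$ separately. A tiny inaccuracy: $[b,x]_{A_b}=d(x)$ carries no $\beta$-term, so only the brackets $[a',x]_{A_b}$ contribute a $B^*$-component; your argument for killing that component via $d(x)=0$ (or equivalently $f$-skew-symmetry) is fine once this is noted.
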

\begin{proof}
	First, we have that

	\begin{multline*}
		A_b^3 = d^2(A) + d(A^2) + \spa\langle [d(a_1),a_2] + f(d^2(a_1),a_2)\beta : a_i \in A\rangle\\
		+ \spa\langle f(d([a_1,a_2]),a_3)\beta : a_i \in A\}.
	\end{multline*}
	As this must be zero, we need

	\begin{equation}\label{eq:conditions2step}
		\begin{cases}
			d(A)\subseteq Z(A), \\
			d(d(A))=d^2(A)=0,   \\
			d([A,A])=d(A^2)=0.
		\end{cases}
	\end{equation}
	The conditions in equation~\eqref{eq:conditions2step}, as $d$ is a derivation, can be expressed in one line as
	\begin{equation}\label{eq:conditions2stepSimplified}
		\im d + A^2\subseteq Z(A) \cap \ker d.
	\end{equation}
	At this point, $A_b^3 = 0$, and we need to check if $A_b^2 \neq 0$ in case $A$ is abelian. This, using equation~(\ref{eq:ab2}), translates into

	$d\neq 0$ finishing the proof.
\end{proof}
\begin{remark}
	Note that every homomorphism $d\colon A\to A$ that satisfies condition~\eqref{eq:conditions2stepSimplified} is indeed a derivation as $\im d\subseteq Z(A)$ and $A^2\subseteq \ker d$.
\end{remark}
\begin{corollary}
	Let $(A_b,f_b)$ be the one-dimensional double extension of an abelian quadratic Lie algebra $(A,f)$ by $(b,d)$. Then $A_b$ is 2-step if and only if $d\neq 0$ and $d^2=0$.
\end{corollary}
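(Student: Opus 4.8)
The plan is to deduce this directly from Proposition~\ref{prop:2step} by specialising to the abelian case, so the argument is essentially a substitution. Since $A$ is abelian we have $A^2=[A,A]=0$ and $Z(A)=A$. Therefore the characterising condition of Proposition~\ref{prop:2step},
\[
0\neq \im d + A^2\subseteq Z(A)\cap\ker d,
\]
collapses to
\[
0\neq \im d\subseteq A\cap\ker d=\ker d.
\]

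Next I would split this single condition into the two asserted ones. The inequality $\im d\neq 0$ is literally the statement $d\neq 0$. The inclusion $\im d\subseteq\ker d$ says exactly that $d(d(a))=0$ for all $a\in A$, i.e. $d^2=0$. Conversely, if $d\neq 0$ and $d^2=0$ then $\im d$ is nonzero and annihilated by $d$, so $0\neq\im d\subseteq\ker d$, and Proposition~\ref{prop:2step} applies (note that in the abelian case any $f$-skew-symmetric linear endomorphism is automatically a derivation, since $[A,A]=0$, so $(A_b,f_b)$ is genuinely a one-dimensional double extension). Combining the two equivalences yields the claim.

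There is essentially no obstacle here: the only point worth stating explicitly is the identification $A^2=0$, $Z(A)=A$ for abelian $A$, after which Proposition~\ref{prop:2step} does all the work. I would phrase the whole proof in one or two sentences referencing that proposition.
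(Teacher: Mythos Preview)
Your proposal is correct and matches the paper's approach: the corollary is stated there without proof, as an immediate specialisation of Proposition~\ref{prop:2step} to the abelian case, which is exactly what you do.
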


As we have previously noted, solvable Lie algebras can be obtained by iterating one-dimensional double extensions. This multistep procedure can be implemented in a nested way, which is the idea in our following construction.

\subsection*{Chained one-dimensional double extensions construction}
We are going to consider a chain of one-dimensional double extensions $\{(A_k,f_k)\}_{k=0}^n$. We start by introducing the following notation for every algebra of our chain:

\begin{align*}
	A_{k+1} = B_{k+1} \oplus A_k \oplus B^*_{k+1},
\end{align*}
where $B_k = \mathbb{K}b_k$ and $B^*_k = \mathbb{K}b_k^*$ are 1-dimensional and $\dim A_k = 2k$.
Now, we can also define $A_{k+1} = A_{k+1,1} \oplus A_{k+1,2}$ where
\begin{align*}
	A_{k+1,1} & = B_{k+1} \oplus A_{k,1},   \\
	A_{k+1,2} & = A_{k,2} \oplus B_{k+1}^*.
\end{align*}
Applying this definition recursively we obtain
\begin{equation}\label{eq:decomposition_Ak}
	A_{k+1,1} = \bigoplus_{i=1}^{k+1} B_i, \qquad A_{k+1,2} = \bigoplus_{i=1}^{k+1} B_i^*.
\end{equation}
All this algebras $A_k$ are associated to an invariant bilinear form $f_k$. Moreover, over them we define derivations $d_k\colon A_k \to A_k$ such that $d_k \in \der_{f_k}(A_k)$ to do the double extensions. Hence, $(A_{k+1}, f_{k+1})$ is the one-dimensional double extension of $(A_k,f_k)$ by $(b_{k+1},d_k)$, starting with $A_0 = \{0\}$ and $f_0=0$. Note this is a really convenient notation. First, it gives us a basis for $A_{k}$:
\begin{equation*}
	\{b_k, b_{k-1}, \ldots, b_1, b_1^*, b_2^*,\ldots,b_k^*\},
\end{equation*}
where the order of this basis is given by the chain itself. Even more, if we divide the set separating $b_i$ from $b^*_j$ elements, we get the bases for $A_{k,1}$ and $A_{k,2}$ respectively. All together, we can see this build as a telescopic construction in Figure~\ref{fig:telescopic}.
\begin{figure}[hbt]
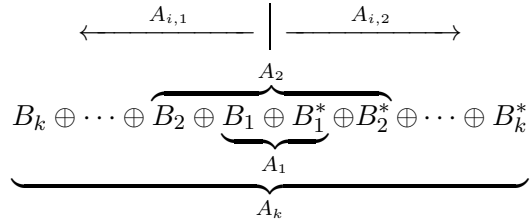

	\newlength{\arrowsep}
	\setlength{\arrowsep}{0.8cm}
	\begin{gather*}
		\begin{array}{c|c}
			\xleftarrow{\hspace{\arrowsep}A_{i,1}\hspace{\arrowsep}} & \xrightarrow{\hspace{\arrowsep}A_{i,2}\hspace{\arrowsep}}
		\end{array}\\
		\underbrace{B_{k} \oplus \cdots \oplus \overbrace{B_2 \oplus \underbrace{B_1 \oplus B_1^*}_{A_1} \oplus B_2^*}^{A_2} \oplus \cdots \oplus B_{k}^*}_{A_{k}}
	\end{gather*}
	\caption{Telescopic view of the chained one-dimensional double extension. 
	}
	\label{fig:telescopic}
\end{figure}

\noindent Let us now define for $k=0,\ldots,n-1$
\begin{align}\label{eq:wk_definition}
	\begin{split}
		w_{k+1}\colon A_{k} \times A_{k} &\to B_{k+1}^*\\
		(a,b) &\mapsto f_{k}(d_{k}(a),b)\,b_{k+1}^*.
	\end{split}
\end{align}
So, in basis
\begin{equation*}
	\{b_{k+1},b_k,\ldots,b_1,b_1^*,\ldots,b_k^*,b_{k+1}^*\},
\end{equation*}
we can give the Lie bracket $[\blank,\blank]_{k+1}$ of algebra $A_{k+1}$ which is
\begin{alignat}{2}
	[b_{k+1}^*, \blank]_{k+1} & = 0,			\qquad           & [b_i, b_j]_{k+1}     & = [b_i,b_j]_k + w_{k+1}(b_i, b_j),\nonumber     \\\label{eq:mulTable}
	[b_{k+1}, b_i]_{k+1}      & = d_k(b_i) ,  \qquad & [b_i^*, b_j^*]_{k+1} & = [b_i^*,b_j^*]_k + w_{k+1}(b_i^*, b_j^*),      \\
	[b_{k+1},b_i^*]_{k+1}     & = d_k(b_i^*),\qquad  & [b_i, b_j^*]_{k+1}   & = [b_i,b_j^*]_k + w_{k+1}(b_i, b_j^*),\nonumber
\end{alignat}
for $1 \leq i,j \leq k$. While the bilinear form satisfies
\begin{equation*}
	\begin{cases}
		f_{k+1}(b_{k+1},b_{k+1}^*) = 1,                                                  \\
		f_{k+1}(b_{k+1},B_{k+1}\oplus A_k) = f_{k+1}(b_{k+1}^*,A_k\oplus B_{k+1}^*) = 0, \\
		f_{k+1}\big|_{A_k\times A_k} = f_k.
	\end{cases}
\end{equation*}

\begin{remark}\label{rmk:chained-bilform}
	Note $A_{k+1,2}=(A_{k+1,2})^\perp$. So, $A_{k+1,2}$ is a \emph{lagrangian for $f_{k+1}$} (see~\cite[Chapter I section 1.C]{elman2008algebraic}) and therefore, $f_{k+1}$ is a \emph{metabolic} or \emph{hyperbolic} symmetric form (they are equivalent terms in characteristic different from 2).
\end{remark}

\begin{remark}\label{rmk:chained-2nil}
	From Lemma \ref{lem:nilpotencyDoubleExtension}, we have $A_{k+1}$ can be $t$-nilpotent only if $A_k$ is $n$-nilpotent with $n\leq t$. Hence, combining this result in the case $t=2$ with Proposition~\ref{prop:2step}, we conclude that $A_{k+1}$ is $2$-step if and only if
	\begin{itemize}
		\item $A_k$ is abelian and $0\neq \im d_k\subseteq \ker d_k$ or,
		\item $A_k$ is $2$-step, and $\im d_k\subseteq Z(A_k)\cap \ker d_k$ and $A_k^2 \subseteq \ker d_k$.
	\end{itemize}
\end{remark}
This remark is useful when searching for $2$-step quadratic Lie algebras, and it leads us to the following definition:
\begin{definition}\label{def:2chained}
	Let $\{(A_k,f_k)\}_{k=0}^{n}$ be a chain of algebras obtained from successive one-dimensional double extensions from the previous one in the chain by $\{b_{k+1},d_k\}_{k=0}^{n-1}$ starting from $A_0=\{0\}$ and $f_0=0$. We say the chain satisfies:
	\begin{itemize}
		\item the \emph{non-null property} (NNP) if there exists $k$ such that $d_{k}\neq 0$,
		\item and the \emph{two-step property} (2SP) if $\im d_k \subseteq A_{k,2} \subseteq \ker d_k$ for every $k\geq 1$.
	\end{itemize}
\end{definition}

In any $\{(A_k,f_k)\}_{k=0}^{n}$ chain of one-dimensional double extensions, we have $d_0=d_1=0$ and, therefore, $A_1$ and $A_2$ are abelian quadratic algebras of dimension $2$ and $4$ respectively. But for greater dimensions we can obtain 2-step algebras. If our chain satisfies (NNP) and (2SP) properties from Definition \ref{def:2chained}, we can easily check by applying (2SP) inductively
\begin{equation}\label{eq:chainContained}
	A_k^2 \subseteq A_{k,2} \subseteq Z(A_k).
\end{equation}
Therefore, every step or link of this chain satisfies equation~\eqref{eq:conditions2stepSimplified}. Hence, its final quadratic Lie algebra $(A_n,f_n)$ for $n\geq 3$ is 2-step by using the (NNP) as observed in Remark~\ref{rmk:chained-2nil}. In addition, we have chosen these properties as we want to end up with a reduced 2-step Lie algebra $A_n$ whose square or centre is $A_{n,2}$ and, as images and kernels determine its product, this is the most natural way to obtain it. The important result we are going to prove later in Theorem~\ref{thm:equivalence} is that we can obtain all reduced quadratic 2-step Lie algebras as the final quadratic Lie algebra $(A_n,f_n)$ of some chain of one-dimensional double extensions that satisfies (NNP) and (2SP).

\begin{lemma}\label{lem:chain}
	Let $\{(A_k,f_k)\}_{k=0}^{n}$ be a chain of one-dimensional double extensions by $\{(b_{k+1},d_k)\}_{k=0}^{n-1}$ satisfying \emph{(NNP)} and \emph{(2SP)}. And let define
	\begin{equation*}
		D_{ijk} := \sgn(\sigma)f_{\sigma(k)-1}(d_{\sigma(k)-1}(b_{\sigma(i)}),b_{\sigma(j)})
	\end{equation*}
	for some permutation $\sigma$ such that $1\leq \sigma(i)<\sigma(j)<\sigma(k)$ or $D_{ijk} = 0$ if some subindexes repeat.  Then $n\geq 3$ and
	\begin{enumerate}[\qquad (a)]
		\item $(A_n,f_n)$ is a $2n$-dimensional $2$-step quadratic Lie algebra 
		      such that $A_{n,2}
			      \subseteq Z(A_n)$, $[b_i, b_j]_n = \sum_{k=1}^{n} D_{ijk}\,b_k^*$, and the invariant bilinear form $f_n$ is given by $f_n(b_i, b_j^*) = \delta_{ij}$ and $f_n(b_i, b_j) = f_n(b_i^*, b_j^*) = 0$. 
		\item $(A_n,f_n)$ is reduced if and only if
		      \begin{equation*}
			      A_{n,2}=\spa \left\langle\; \sum_{k=1}^n \hat{w}_k(b_i,b_j): 1\leq j<i\leq n\right\rangle
		      \end{equation*}
		      where $\hat{w}_k$ is the alternating extension of $w_k$ defined in equation~\eqref{eq:wk_definition}.
	\end{enumerate}

\end{lemma}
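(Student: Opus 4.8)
The plan is to work by induction along the chain, unwinding the recursive bracket formula~\eqref{eq:mulTable} into a closed expression, and then read off both assertions from that expression together with the structural results already established (Remark~\ref{rmk:chained-2nil}, Proposition~\ref{prop:2step}, and Lemma~\ref{lem:centre}). First I would note that $d_0=d_1=0$ forces $A_1,A_2$ to be abelian, so (NNP) can only be realized by some $d_k$ with $k\geq 2$, giving $n\geq 3$; combined with~\eqref{eq:chainContained} and Remark~\ref{rmk:chained-2nil}, this already yields that $(A_n,f_n)$ is a $2n$-dimensional $2$-step quadratic Lie algebra with $A_{n,2}\subseteq Z(A_n)$ (indeed $A_n^2\subseteq A_{n,2}\subseteq Z(A_n)$), and the stated shape of $f_n$ on the basis $\{b_i,b_j^*\}$ is exactly the bilinear form from the chained construction, with $A_{n,2}$ lagrangian by Remark~\ref{rmk:chained-bilform}.

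For the formula $[b_i,b_j]_n=\sum_{k=1}^n D_{ijk}\,b_k^*$ in part~(a), I would argue as follows. By (2SP), $\im d_k\subseteq A_{k,2}\subseteq\ker d_k$, so every $d_k$ annihilates all dual generators $b_j^*$ and sends each $b_i$ into $A_{k,2}=\bigoplus_{\ell\le k}B_\ell^*$; hence in~\eqref{eq:mulTable} the only brackets that survive and are nonzero modulo $A_{k,2}$ are those of the form $[b_i,b_j]$, and the correction term $w_{k+1}(b_i,b_j)=f_k(d_k(b_i),b_j)\,b_{k+1}^*$ records a single coefficient along $b_{k+1}^*$. Unwinding the recursion $[b_i,b_j]_{k+1}=[b_i,b_j]_k+w_{k+1}(b_i,b_j)$ from $k=\max(i,j)-1$ up to $n-1$, one gets $[b_i,b_j]_n=\sum_{k} f_{k-1}(d_{k-1}(b_i),b_j)\,b_k^*$ for $i>j$, where the sum effectively runs over $k\ge \max(i,j)$ (earlier terms vanish because $d_{k-1}$ has not yet ``seen'' $b_i$ or $b_j$). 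Extending skew-symmetrically in $i,j$ recovers precisely the quantity $D_{ijk}$: the permutation $\sigma$ and $\sgn(\sigma)$ just bookkeep the reordering so that the arguments are indexed $\sigma(i)<\sigma(j)<\sigma(k)$, with $D_{ijk}=0$ when two indices coincide (consistent with skew-symmetry and with $d_k$ killing $A_{k,2}$). The main obstacle here is purely notational: verifying that the triple-index gymnastics in the definition of $D_{ijk}$ matches the recursively generated coefficients, including the sign, and checking the edge cases where some index exceeds $n$ or the smallest index equals the index of the derivation.

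For part~(b), I would use that $(A_n,f_n)$ is already $2$-step with $A_n^2\subseteq A_{n,2}\subseteq Z(A_n)$, so by the definition of reduced in the $2$-step case, reducedness is equivalent to $Z(A_n)=A_n^2$, i.e.\ to $A_n^2=A_{n,2}$ (the inclusion $A_n^2\subseteq A_{n,2}$ always holds, so the content is the reverse inclusion, equivalently $\dim A_n^2 = n$). Now $A_n^2$ is spanned by the brackets $[b_i,b_j]_n$ with $1\le j<i\le n$, and by part~(a) these are exactly $\sum_{k=1}^n\hat w_k(b_i,b_j)$ once we identify $\hat w_k$ as the alternating extension of $w_k$ (on a pair $(b_i,b_j)$ with $i>j$ this is $f_{k-1}(d_{k-1}(b_i),b_j)\,b_k^*$, and the alternation supplies the sign for $i<j$ and zero on the diagonal, matching $D_{ijk}$). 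Hence $A_n^2=\spa\langle\sum_{k=1}^n\hat w_k(b_i,b_j):1\le j<i\le n\rangle$ unconditionally, and reducedness holds precisely when this span equals $A_{n,2}$, which is the claimed criterion. I expect no real obstacle in~(b) beyond making the identification of $\hat w_k$ with the relevant slice of $D_{ijk}$ explicit and invoking Lemma~\ref{lem:centre} (or directly~\eqref{eq:ab2} iterated) to be sure that no extra central elements outside $A_{n,2}$ can appear — which is guaranteed because at each step $d_k\notin\inner A_k$ is not needed; rather, the $2$-step hypothesis already pins $Z(A_k)\supseteq A_{k,2}$ and the only way to enlarge $Z$ beyond $A_{k,2}$ is prevented exactly when the span fills $A_{n,2}$.
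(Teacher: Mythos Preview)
Your overall strategy matches the paper's proof: establish $n\ge 3$ and the $2$-step property from (NNP)/(2SP) and Remark~\ref{rmk:chained-2nil}, unwind the recursive bracket~\eqref{eq:mulTable} to extract the structure constants $D_{ijk}$, and then read off~(b) from the identity $A_n^2=\spa\langle [b_i,b_j]_n\rangle$ together with the dimension relation~\eqref{eq:centre_orto_der}. Part~(b) in your sketch is fine once you make explicit that $A_n^2=A_{n,2}$ forces $\dim Z(A_n)=n$ via~\eqref{eq:centre_orto_der}, hence $Z(A_n)=A_{n,2}=A_n^2$; the paper does exactly this.

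There is, however, a concrete gap in your unwinding for part~(a). The recursion $[b_i,b_j]_{k+1}=[b_i,b_j]_k+w_{k+1}(b_i,b_j)$ from~\eqref{eq:mulTable} is valid only when both $b_i,b_j\in A_k$, i.e.\ for $k\ge i=\max(i,j)$. At the bottom, $b_i$ first enters at level $i$, and the bracket there is \emph{not} given by that recursion but by the other rule in~\eqref{eq:mulTable}: $[b_i,b_j]_i=[b_{k+1},b_j]_{k+1}=d_{i-1}(b_j)$. This base term lies in $A_{i-1,2}=\spa\langle b_1^*,\dots,b_{i-1}^*\rangle$ and supplies all the coefficients $D_{ijk}$ with $k<i$; it does not ``vanish because $d_{k-1}$ has not yet seen $b_i$ or $b_j$.'' Consequently your displayed formula $[b_i,b_j]_n=\sum_{k\ge i} f_{k-1}(d_{k-1}(b_i),b_j)\,b_k^*$ captures only the components along $b_k^*$ with $k>i$ and misses the ones with $k<i$. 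The paper handles this by writing, for $j<i$,
\[
[b_i,b_j]_n = d_{i-1}(b_j) + \sum_{k=i+1}^{n} w_k(b_i,b_j),
\]
and then expanding the base term separately as $d_{i-1}(b_j)=\sum_{k=1}^{i-1} f_{i-1}(d_{i-1}(b_j),b_k)\,b_k^*=\sum_{k=1}^{i-1} D_{ijk}\,b_k^*$ using (2SP). Once you insert this missing piece, the full identity $[b_i,b_j]_n=\sum_{k=1}^n D_{ijk}\,b_k^*$ follows, and the rest of your argument goes through as in the paper.
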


\begin{proof}
	First of all, we can observe $D_{ijk}$ definition resembles the idea $d_{k-1}$ is $f_{k-1}$-skew-symmetric because $D_{ijk}=-D_{ijk}$ and $D_{iik}=0$ as $f_{k-1}(d_k(b_i),b_j)$ for $i,j \leq k$ in characteristic different from 2.

	Next, from previous arguments after Definition \ref{def:2chained}, we have that $(A_n,f_n)$ is a quadratic $2$-step Lie algebra. Now, applying multiplication table in equation~\eqref{eq:mulTable} recursively and using $A_{k,2} \subseteq \ker d_k$ by (2SP), we obtain
	\begin{equation}\label{eq:productChainLong}
		\begin{cases}
			[b_i, b_j]_{k+1} = w_{k+1}(b_i,b_j) + w_{k}(b_i,b_j) + \ldots + w_{i+1}(b_i,b_j) + d_{i-1}(b_j), \\
			[b_i^*, \cdot\,]_{k+1} = 0,
		\end{cases}
	\end{equation}
	for $ 1 \leq j<i\leq k+1$. We also have for $ 1 \leq i,j \leq k+1$
	\begin{equation*}
		\begin{cases}
			f_{k+1}(b_i, b_j^*) = \delta_{ij},             \\
			f_{k+1}(b_i, b_j) = f_{k+1}(b_i^*, b_j^*) = 0.
		\end{cases}
	\end{equation*}
	Moreover, (2SP) also implies that $d_{i-1}(A_{i-1})\subseteq A_{{i-1},2}=\spa \langle b_1^*,\dots, b_{i-1}^*\rangle$ and for $j<i$,
	\begin{equation*}
		d_{i-1}(b_j) = \sum_{k=1}^{i-1}f_{i-1}(d_{i-1}(b_j),b_k)b_k^* = \sum_{k=1}^{i-1}D _{jki}\,b_k^* = \sum_{k=1}^{i-1}D_{ijk}\,b_k^*,
	\end{equation*}
	by using $D_{ijk}$ definition in the last equality. So product~\eqref{eq:productChainLong} in $(A_n,f_n)$ turns into
	\begin{equation*}
		\begin{cases}
			[b_i, b_j]_n = \sum_{k=1}^{n} D_{ijk}\,b_k^*, \\
			[b_i^*, \cdot\,]_n = 0.
		\end{cases}
	\end{equation*}
	And even more,
	\begin{equation*}
		f_n([b_i,b_j]_n, b_k) = D_{ijk}.
	\end{equation*}
	Now $A_n$ is a $2$-step nilpotent Lie algebra, thus being reduced is equivalent to $Z(A_n)=A_n^2$.
	Now if we define $\hat{w}_k(b_i,b_k) = w_k(b_i,b_j)$

	when $i,j<k$ and $\hat{w}_k(b_i,b_j) = \sgn(\sigma) w_{\sigma(k)}(b_{\sigma(i)},b_{\sigma(j)})$

	where $\sigma$ is some permutation of $\{i,j,k\}$ such that $\sigma(k) = \max\{i,j,k\}$,
	\begin{equation*}
		\ad_{A_n} b_i (b_j) = \sum_{k=0}^{n-1} \hat{w}_{k+1}(b_i,b_j).
	\end{equation*}
	Hence $A_n^2=\spa \langle \sum_{k=1}^n \hat{w}_k(b_i,b_j): 1\leq j<i\leq n\rangle$ and applying equation~\eqref{eq:centre_orto_der} for $k=n$ we finish the proof.
\end{proof}

All these relations and notation will serve us later to prove the equivalence between the different approaches for constructing these algebras.

\subsection{T*-extension}\label{sub:Tstar}

The $T^*$-extension is a one-step method, which was introduced by Bordemann in 1997. In contrast to double extension, it can be applied not only to Lie algebras, but to arbitrary non-associative algebras. Nevertheless, as we are focused on the study of Lie algebras, we will only see its definition applied on these algebras (for a general definition see \cite{bordemann1997nondegenerate}). 
Along this subsection, $(B,[x,y]_B)$ will be a Lie algebra.

Let $V$ a $B$-module given by the representation $\rho\colon B\to \mathfrak{gl}(V)$ (indeed a Lie algebra homomorphism as $\mathfrak{gl}(V)$ denotes the general Lie algebra of endomorphisms over the vector space $V$). In order to reach the definition of $T^*$-extension we need the following basic cohomology notions.

\begin{definition}
	Let $w\colon B\times B \to V$ be a bilinear map, $V$ a $B$-module given by the representation $\rho$, and $a,b,c$ arbitrary elements of $B$. Then we say:
	\begin{itemize}
		\item $w$ is \emph{non-degenerate} if its radical is zero, i.e. $\rad w=\{b\in B: w(b,\blank)=0\} = 0$;
		\item $w$ is \emph{cyclic} if
		      \begin{equation}\label{eq:cyclic}
			      w(a,b)(c) = w(c,a)(b) = w(b,c)(a);
		      \end{equation}
		\item and $w$ is a \emph{$2$-cocycle} if $w$ is skew-symmetric and
		      \begin{equation*}
			      \sum_{\overset{\circlearrowright}{a,b,c}}w([a,b],c) = \sum_{\overset{\circlearrowright}{a,b,c}} \rho(a) w(b,c).
		      \end{equation*}
	\end{itemize}
	The vector space of $2$-cocycles with values in $V$ is denoted by $Z^2(B,V)$.
\end{definition}
\begin{remark}\label{rmk:2cocyc_3cocyc}
	Given a bilinear map $w\colon B\times B\to B^*$, we can define the trilinear map $\phi_w: B\times B\times B \to \ku$. It is straightforward to infer $w$ is cyclic, $w\in Z^2(B,B^*)$ where $V=B^*$ by the coadjoint representation if and only if $\phi_w$ is a $3$-cocycle, which means it is a $3$-alternating form such that
	\begin{multline*}
		\phi_w([b_0,b_1], b_2,b_3)+\phi_w([b_1, [b_0,b_2],b_3)+\phi_w([b_1, b_2,[b_0,b_3])=\\
		\phi_w(b_0,[b_1, b_2],b_3)+\phi_w(b_0,b_2, [b_1,b_3])-\phi_w(b_0,b_1, [b_2,b_3]).
	\end{multline*}
	The vector space of scalar $3$-cocycles is denoted by $Z^3(B,\ku)$. Here, $V=\ku$ comes from the trivial representation.
\end{remark}

Consider now an arbitrary bilinear form $w\colon B\times B \to B^*$, and define the following multiplication on the vector space $ B\oplus B^*$ for $b,b' \in B$ and $\beta, \beta' \in B^*$:
\begin{equation}\label{eq:TwProduct}
	[b+\beta,\ b'+\beta'] := [b,b']_B + w(b,b') + \ad^*(b)(\beta') - \ad^*(b')(\beta),
\end{equation}
where $\ad^*$ is the coadjoint representation defined in equation~\eqref{eq:coadjoint}.
Moreover, we construct the symmetric bilinear form $q_B$ as:
\begin{equation}\label{eq:TwForm}
	q_B(b+\beta, b'+\beta') := \beta(b') + \beta'(b).
\end{equation}

\begin{proposition}\label{prop:cond2coc}
	Let $B$, $B^*$, $w$, and $q_B$ be as above. Then:
	\begin{itemize}
		\item [\emph{(a)}] The vector space $B\oplus B^*$ with the binary product given in equation~\eqref{eq:TwProduct} is a Lie algebra if and only if $B$ is a Lie algebra and $w\in Z^2(B,B^*)$.
		\item [\emph{(b)}] The hyperbolic form $q_B$ defined in equation~\eqref{eq:TwForm} is an invariant bilinear form of the Lie algebra $B\oplus B^*$ if and only if  $w$ is cyclic.
	\end{itemize}
	So, $(B\oplus B^*, q_B)$ is a quadratic Lie algebra if and only if the bilinear form $w$ is a cyclic $2$-cocycle and $(B,[x,y]_B)$ is a Lie algebra. 
\end{proposition}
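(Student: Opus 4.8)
The plan is to read off from \eqref{eq:TwProduct} that $B^{*}$ is an \emph{abelian ideal} of $B\oplus B^{*}$ (one has $[B^{*},B^{*}]=0$ and $[b+\beta,\beta']=\ad^{*}(b)(\beta')\in B^{*}$), and to organise all the checks around this fact. In particular, if $B\oplus B^{*}$ is a Lie algebra then so is the quotient $(B\oplus B^{*})/B^{*}$, and that quotient is precisely $(B,[\blank,\blank]_{B})$, since the quotient bracket sends $(b+B^{*},b'+B^{*})$ to $[b,b']_{B}+B^{*}$; this already forces ``$B$ is a Lie algebra'' in part~(a). For all the remaining implications I may therefore assume $B$ is a Lie algebra, so that the coadjoint map $\ad^{*}$ of \eqref{eq:coadjoint} is a Lie representation of $B$ on $B^{*}$, that is $\ad^{*}([b,b']_{B})=[\ad^{*}(b),\ad^{*}(b')]$. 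It then suffices to check, by multilinearity on homogeneous triples sorted by how many arguments lie in $B^{*}$, the skew-symmetry and Jacobi identity of \eqref{eq:TwProduct} for (a), and the symmetry, non-degeneracy and invariance of $q_{B}$ for (b).

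For~(a): the bracket \eqref{eq:TwProduct} is skew-symmetric if and only if $w(b,b')=-w(b',b)$, because $[\blank,\blank]_{B}$ is skew and swapping the arguments interchanges the two coadjoint terms. Granting this, let $J(x,y,z)=[[x,y],z]+[[y,z],x]+[[z,x],y]$ be the Jacobiator. If two or three of $x,y,z$ lie in $B^{*}$, then $J=0$ since $B^{*}$ is an abelian ideal. If exactly one of them lies in $B^{*}$, only the coadjoint terms survive and $J$ collapses to $\bigl(\ad^{*}([b,b']_{B})-[\ad^{*}(b),\ad^{*}(b')]\bigr)(\beta)=0$. If $x=b_{1}$, $y=b_{2}$, $z=b_{3}$ all lie in $B$, expanding $[b_{i},b_{j}]=[b_{i},b_{j}]_{B}+w(b_{i},b_{j})$ shows that the $B$-part of $J$ is the Jacobiator of $B$, hence $0$, while the $B^{*}$-part equals
\[
  \sum_{\circlearrowright} w\bigl([b_{1},b_{2}]_{B},b_{3}\bigr)-\sum_{\circlearrowright}\ad^{*}(b_{1})\bigl(w(b_{2},b_{3})\bigr),
\]
the cyclic sums running over $(b_{1},b_{2},b_{3})$. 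Hence $J\equiv 0$ exactly when $w$ is skew-symmetric and satisfies the $2$-cocycle identity, i.e.\ $w\in Z^{2}(B,B^{*})$; together with the preliminary remark this is~(a).

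For~(b): $q_{B}$ in \eqref{eq:TwForm} is symmetric by inspection, and non-degenerate (if $\beta(b')+\beta'(b)=0$ for all $b'+\beta'$, take first $\beta'=0$, then $b'=0$); since it pairs $B$ with $B^{*}$ by evaluation and kills $B\times B$ and $B^{*}\times B^{*}$, the ideal $B^{*}$ is Lagrangian and $q_{B}$ is hyperbolic. So only invariance, $q_{B}([x,y],z)=q_{B}(x,[y,z])$, can fail. Testing on homogeneous triples and using $\ad^{*}(b)(\beta)(b')=-\beta([b,b']_{B})$, every case with at least one argument in $B^{*}$ turns into an identity that holds automatically; the only remaining case $x=b_{1}$, $y=b_{2}$, $z=b_{3}\in B$ gives $q_{B}([b_{1},b_{2}],b_{3})=w(b_{1},b_{2})(b_{3})$ and $q_{B}(b_{1},[b_{2},b_{3}])=w(b_{2},b_{3})(b_{1})$. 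Thus $q_{B}$ is invariant if and only if $w(b_{1},b_{2})(b_{3})=w(b_{2},b_{3})(b_{1})$ for all $b_{i}$, which is cyclicity \eqref{eq:cyclic}. Assembling (a) and (b) gives the final assertion: $(B\oplus B^{*},q_{B})$ is a quadratic Lie algebra if and only if $B$ is a Lie algebra and $w$ is a cyclic $2$-cocycle.

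I expect the only delicate point to be the sign bookkeeping that matches the $B^{*}$-component of the Jacobiator with the $2$-cocycle convention of the Definition above (in particular that it is the coadjoint representation, and no other, that must appear there); and, on the ``only if'' side of~(a), the mild subtlety that $B$ is recovered as a quotient by the abelian ideal $B^{*}$ rather than as a subalgebra, since $[b,b']$ generically has a nonzero $B^{*}$-component.
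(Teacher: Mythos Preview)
Your proof is correct and complete. The paper's own proof of this proposition is essentially a citation: it refers to \cite[p.~177]{bordemann1997nondegenerate} for assertion~(a) and to \cite[Lemma~3.1]{bordemann1997nondegenerate} for assertion~(b), noting only that skew-symmetry of the bracket forces $[\blank,\blank]_B$ and $w$ to be skew, and that the cyclic condition is what makes $q_B$ invariant. Your argument carries out precisely the direct verification that underlies those references: organising the Jacobi and invariance checks by the number of $B^*$-arguments, and extracting the $2$-cocycle and cyclicity conditions from the only nontrivial cases (all three arguments in $B$). The one point where you go slightly beyond the paper is the ``only if'' direction of~(a): you recover $B$ as the quotient $(B\oplus B^*)/B^*$ by the abelian ideal $B^*$, which cleanly forces $B$ to be a Lie algebra without assuming it in advance; the paper sidesteps this by working under the blanket hypothesis, stated at the start of the subsection, that $(B,[\blank,\blank]_B)$ is a Lie algebra.
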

\begin{proof}
	Assertion (a) follows from \cite[p. 177]{bordemann1997nondegenerate}. Here we can see the Jacobi identity is satisfied if and only if $B$ is a Lie algebra and $w$ is a $2$-cocycle. On the other hand, product in equation~\eqref{eq:TwProduct} is skew-symmetric if and only if $[x,y]_B$ and $w$ are skew, which also comes from being a 2-cocyle. Finally, to prove assertion (b) about the $T^*$ construction, we use \cite[Lemma 3.1]{bordemann1997nondegenerate}, which adds us the cyclic condition in order to be quadratic.
\end{proof}
\begin{definition}
	Let $w\colon B\times B \to B^*$ be cyclic $2$-cocycle and $B$ Lie algebra. The quadratic algebra $(B\oplus B^*, q_B)$, with product and quadratic form defined in equations~\eqref{eq:TwProduct} and \eqref{eq:TwForm} respectively, is called the $T^*$-extension of $B$ by $w$, and we denote it as $(T^*_w B, q_B)$.
\end{definition}

Finally, we have the following theorem (see~\cite[Theorem~3.2]{bordemann1997nondegenerate}) which gives us conditions about when we can build a quadratic Lie algebra using $T^*$-extensions.
\begin{theorem}[Bordemann, 1997]\label{thm:text_isometric}
	Let $(A,f)$ be a quadratic Lie algebra of finite dimension $n$ over a field $\mathbb{K}$ of characteristic not equal to two. Then $(A,f)$ will be isometric to a $T^*$-extension $(T^*_wB, q_B)$ if and only if $n$ is even and $A$ contains an isotropic ideal
	$I$ (i.e. $I \subset I^\perp$) of dimension $n/2$. In this case, as a Lie algebra, $B$ is isomorphic to the quotient $A/I$.
\end{theorem}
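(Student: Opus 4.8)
The statement to prove is Theorem~\ref{thm:text_isometric} (Bordemann, 1997): a quadratic Lie algebra $(A,f)$ of dimension $n$ over a field of characteristic $\neq 2$ is isometric to a $T^*$-extension if and only if $n$ is even and $A$ contains an isotropic ideal of dimension $n/2$.

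The plan is to prove both implications directly from the definitions. The forward direction is essentially a computation: given $(T^*_w B, q_B)$, I would point to $I = B^* = \{0 + \beta : \beta \in B^*\}$. From the product formula~\eqref{eq:TwProduct}, $[B^*, B^*] = 0$ (both the bracket $[\blank,\blank]_B$ term and the $w$ term vanish on $B^*$, and $\ad^*$ applied to elements of $B^*$ makes no sense — only the $B$-components act), so $B^*$ is abelian; and $[B \oplus B^*, B^*] \subseteq B^*$ since $\ad^*(b)(\beta') \in B^*$, so $B^*$ is an ideal. From~\eqref{eq:TwForm}, $q_B(\beta, \beta') = 0$ for all $\beta, \beta' \in B^*$, so $B^*$ is isotropic; and $\dim B^* = \dim B = n/2$ since $\dim(T^*_w B) = 2 \dim B = n$. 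Finally $B \cong (T^*_w B)/B^*$ as Lie algebras, reading off the product formula modulo $B^*$.

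The converse is the substantive half. Given $(A,f)$ with an isotropic ideal $I$ of dimension $n/2$, note first that isotropic of dimension $n/2$ forces $I = I^\perp$ (since $I \subseteq I^\perp$ and $\dim I^\perp = n - \dim I = n/2$), i.e.\ $I$ is Lagrangian. Set $B := A/I$, a Lie algebra. I would then choose a complement: pick a subspace $W \subseteq A$ with $A = I \oplus W$ as vector spaces, so $W \cong B$ linearly. The non-degenerate pairing $f$ restricted to $I \times W$ is a perfect pairing (because $I = I^\perp$), which furnishes a linear isomorphism $I \cong W^* \cong B^*$; concretely, identify $\iota \in I$ with the functional $b + I \mapsto f(\iota, w_b)$ where $w_b \in W$ represents $b$. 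Transporting everything through $A \cong W \oplus I \cong B \oplus B^*$, I would then verify that the bracket of $A$ takes exactly the form~\eqref{eq:TwProduct} for a suitable $w\colon B \times B \to B^*$ defined by $w(b, b') := \text{(the $I$-component of }[w_b, w_{b'}])$ transported to $B^*$, that $f$ corresponds to $q_B$ in~\eqref{eq:TwForm}, and that $B$ acts on $B^*$ via $\ad^*$. By Proposition~\ref{prop:cond2coc}, since $(A,f)$ is a quadratic Lie algebra, this $w$ is automatically a cyclic $2$-cocycle, so $(A,f) \cong (T^*_w B, q_B)$.

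The main obstacle is the bookkeeping in the converse: one must check that the transported bracket has \emph{no} stray terms — in particular that $[I, I] = 0$ (which follows because $I$ is an abelian ideal: $I$ Lagrangian and a subalgebra forces $f([I,I], I) = f(I, [I,I]) \subseteq f(I, I) = 0$ and then $[I,I] \subseteq I^\perp{}^\perp$... more carefully, $[I,I]\subseteq I$ and $f([i,i'],a)=-f(i',[i,a])$; since $[i,a]\in I$ when... no, $I$ is an ideal so $[i,a]\in I$, hence $f(i',[i,a])=0$ as $i'\in I=I^\perp$, so $[I,I]\perp A$, forcing $[I,I]=0$), that the action of $W$ on $I$ matches the coadjoint action under the identification $I \cong B^*$ (this is where invariance of $f$ is used: $f([w,\iota], w') = -f(\iota, [w,w'])$ unpacks precisely to the $\ad^*$ formula~\eqref{eq:coadjoint}), and that the $W$-component of $[W,W]$ descends to the bracket of $B = A/I$. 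Once the identification is set up cleanly, each verification is a short application of invariance~\eqref{eq:lie_invariant} and the ideal property, and the cyclic-$2$-cocycle condition on $w$ comes for free from Proposition~\ref{prop:cond2coc} rather than needing a separate check.
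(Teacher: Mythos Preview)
The paper does not supply its own proof of this theorem; it is quoted from \cite[Theorem~3.2]{bordemann1997nondegenerate} and followed only by a remark, so there is no in-paper argument to compare against. Your outline is essentially Bordemann's original strategy and is structurally sound, but there is one genuine gap in the converse direction.

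When you write ``pick a subspace $W \subseteq A$ with $A = I \oplus W$ as vector spaces'' and later assert ``that $f$ corresponds to $q_B$ in~\eqref{eq:TwForm}'', you are tacitly assuming $f(W,W)=0$. An arbitrary linear complement to $I$ need not be isotropic; if it is not, the transported form on $B\oplus B^*$ carries an extra symmetric term on the $B\times B$ block and is \emph{not} $q_B$. The fix is standard but must be stated: since $I=I^\perp$ is Lagrangian and $\mathrm{char}\,\mathbb{K}\neq 2$, one may choose the complement $W$ to be Lagrangian as well (equivalently, the restriction of $f$ to any hyperbolic pair built from $I$ admits a totally isotropic splitting). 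This is precisely the point at which the characteristic hypothesis is used, and in your write-up that hypothesis is never invoked. Once $W$ is taken isotropic, your remaining verifications---$[I,I]=0$ via invariance and $I=I^\perp$, the identification of the $W$-action on $I$ with the coadjoint action, the definition of $w$ as the $I$-component of $[w_b,w_{b'}]$, and the appeal to Proposition~\ref{prop:cond2coc} to obtain the cyclic $2$-cocycle conditions for free---all go through exactly as you describe.
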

\begin{remark}
	As seen in the proof, any isotropic ideal of dimension $n/2$ will work. We also note that, as stated in original paper, any isotropic subspace $V$ of $A$ whose dimension is $\dim A/2$ is an ideal of $A$ if and only if it is abelian ($V^2=0$).
\end{remark}

\begin{corollary}\label{cor:lagrangian}
	The class of $T^*$-extensions is just the class of quadratic Lie algebras of dimension $2n$ with a lagrangian $n$-dimensional ideal.
\end{corollary}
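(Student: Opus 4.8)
The plan is to read the statement as an almost immediate consequence of Bordemann's Theorem~\ref{thm:text_isometric} together with a dimension count, using the standard fact that a lagrangian subspace of a $2n$-dimensional non-degenerate quadratic space is exactly a totally isotropic subspace of dimension $n$.

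For the inclusion that every $T^*$-extension lies in the stated class, I would start from an arbitrary $T^*$-extension $(T^*_wB, q_B)$ with $\dim B = n$, so that $\dim(B\oplus B^*) = 2n$. The subspace $B^*\subseteq B\oplus B^*$ is totally isotropic for $q_B$ because $q_B(\beta,\beta') = 0$ for all $\beta,\beta'\in B^*$ by definition~\eqref{eq:TwForm}; since $\dim B^* = n$ is exactly half of $\dim(B\oplus B^*)$ and $q_B$ is non-degenerate, $B^*$ is lagrangian, i.e. $(B^*)^\perp = B^*$. Moreover $B^*$ is an ideal: from the product~\eqref{eq:TwProduct} one reads off $[b+\beta, \beta'] = \ad^*(b)(\beta')\in B^*$ and $[\beta,\beta'] = 0$. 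Hence $(T^*_wB, q_B)$ is a $2n$-dimensional quadratic Lie algebra with a lagrangian $n$-dimensional ideal.

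For the converse, let $(A,f)$ be a quadratic Lie algebra of dimension $2n$ possessing a lagrangian ideal $I$. By definition $I = I^\perp$, and non-degeneracy of $f$ forces $\dim I = n = \tfrac12\dim A$; in particular $I$ is a totally isotropic ideal of dimension $\dim A/2$. Theorem~\ref{thm:text_isometric} then applies directly and produces an isometric isomorphism of $(A,f)$ with a $T^*$-extension $(T^*_wB, q_B)$, where $B\cong A/I$. Combining the two inclusions gives the claimed equality of classes. (One can equally invoke the remark following Theorem~\ref{thm:text_isometric}, which records that a half-dimensional isotropic subspace is an ideal precisely when it is abelian, but this is not needed here since being an ideal is part of the hypothesis.)

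The only point deserving a word of care — more a bookkeeping remark than a genuine obstacle — is the identification of a lagrangian with a totally isotropic subspace of dimension $n$ in a $2n$-dimensional non-degenerate quadratic space, so that the hypotheses of Theorem~\ref{thm:text_isometric} are visibly met; this is standard (see e.g.~\cite[Chapter I section 1.C]{elman2008algebraic}). Everything else is either Bordemann's theorem quoted verbatim or a one-line check on the structure constants~\eqref{eq:TwProduct} and~\eqref{eq:TwForm}.
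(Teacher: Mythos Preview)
Your proof is correct and follows exactly the route the paper intends: the corollary is stated without proof immediately after Theorem~\ref{thm:text_isometric} and its remark, and your argument simply unpacks that implication by checking that $B^*$ is a lagrangian ideal in any $T^*$-extension and then invoking Bordemann's theorem for the converse.
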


\begin{example}
	$T^*$-extensions of Lie algebras by the null bilinear form, $T^*_0B$ are just split extensions of that Lie algebra $B$ by means of its coadjoint representation. And even more, any invariant bilinear form $f\colon B\times B\to \ku$ let us define another invariant quadratic form $Q_{q_B,f}$ on $(T^*_0B,q_B)$:
	\begin{equation*}
		Q_{q_B,f}(a+\alpha, b+\beta)=q_B(a+\alpha, b+\beta) + f(a,b) = \alpha(b)+\beta(a)+f(a,b).
	\end{equation*}
	The resulting quadratic Lie algebra $(T^*_0B, Q_{q_B,f})$ was introduced in \cite{hofmann1986invariant} as the \emph{inflaction of $B$} with respect to the forms $q_B$ and $f$. According to \cite[Lemma 2.9]{hofmann1986invariant}, inflactions occur prominently in the structure of quadratic mixed Lie algebras. \hfill $\square$
\end{example}

\begin{example}
	Let $\mathfrak{h}=\spa \langle x,y, z \rangle$ be the Heissenberg $3$-dimensional Lie algebra  given by the nonzero products $[x,y]=z$. Apart from the $6$-dimensional $2$-step Lie algebra $T^*_0\mathfrak{h}$, we can construct a $6$-dimensional $3$-step Lie algebra taking the cyclic 2-cocyle $w$ defined as
	\begin{equation*}
		w(v_1,v_2)(v_3)=\left|
		\begin{array}{ccc}
			\lambda_1 & \lambda_2 & \lambda_3 \\
			\beta_1   & \beta_2   & \beta_3   \\
			\gamma_1  & \gamma_2  & \gamma_3
		\end{array}
		\right|,
	\end{equation*}
	where $v_i=\lambda_ix+\beta_iy+\gamma_iz$.  \hfill $\square$
\end{example}
Once the construction is clear, we can start by seeing what does $B$ and $w$ need to satisfy in order to obtain a 2-step quadratic algebra as we have already done in the double extension. First, analogue to Lemma~\ref{lem:nilpotencyDoubleExtension}, we have the following result about the nilpotency order of the extension. It comes from~\cite[Theorem~3.1]{bordemann1997nondegenerate} but adapting indices to our situation.
\begin{proposition}
	If $B$ is a $k$-step nilpotent Lie algebra, then for every cyclic 2-cocycle $w\colon B \times B \to B^*$ the $T^*$-extension $T^*_w B$ is $n$-step nilpotent where $k\leq n \leq 2k$.
\end{proposition}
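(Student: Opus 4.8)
The plan is to work with the natural projection $\pi\colon T^*_wB\to B$, $b+\beta\mapsto b$, together with the abelian ideal $B^*$. From the bracket~\eqref{eq:TwProduct}, the $B$-component of $[b+\beta,b'+\beta']$ is exactly $[b,b']_B$, so $\pi$ is a surjective Lie algebra homomorphism with $\ker\pi=B^*$; moreover $B^*$ is an abelian ideal, since $w(b,b')$, $\ad^*(b)(\beta')$ and $\ad^*(b')(\beta)$ all lie in $B^*$ while $[B^*,B^*]=0$. Write $L:=T^*_wB$.

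For the lower bound I would first record that a surjective Lie homomorphism satisfies $\pi(L^m)=B^m$ for all $m$ (immediate induction from $\pi([L,L^m])=[\pi L,\pi L^m]$). Hence $B^k\neq 0$ forces $L^k\neq 0$, giving $n\geq k$, while $B^{k+1}=0$ gives $L^{k+1}\subseteq\ker\pi=B^*$.

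For the upper bound the main device is a filtration of $B^*$ that decreases under the action of $B$: set $W_m:=\{\beta\in B^*:\beta(B^m)=0\}$, the annihilator of $B^m$, so that $W_1=0$ and $W_{k+1}=B^*$. The key (short) computation is $[B,W_m]\subseteq W_{m-1}$ for $m\geq 2$: for $b\in B$, $\beta\in W_m$ and $a\in B^{m-1}$ one has $(\ad^*(b)(\beta))(a)=-\beta([b,a])$ with $[b,a]\in B^m$, so this vanishes. Since $W_m\subseteq B^*$ and $[B^*,B^*]=0$, this upgrades to $[L,W_m]\subseteq W_{m-1}$. Now, starting from $L^{k+1}\subseteq B^*=W_{k+1}$ and iterating $L^{r+1}=[L,L^r]$ together with $[L,W_m]\subseteq W_{m-1}$, an induction on $j$ yields
\[
L^{k+1+j}\subseteq W_{k+1-j}\qquad(0\leq j\leq k);
\]
at $j=k$ this reads $L^{2k+1}\subseteq W_1=0$, so $L$ is nilpotent of nilindex $n\leq 2k$.

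I do not expect a genuine obstacle: the whole argument is the interplay of the two filtrations --- $L^m$ surjecting onto $B^m$ under $\pi$, and the annihilator filtration $W_m$ of $B^*$ contracting under $\ad B$ --- and once these are set up the bounds $k\leq n\leq 2k$ fall out. The only places calling for a little care are the verification that $[B,W_m]\subseteq W_{m-1}$ and tracking the index shift in the final induction, both of which are routine. (Alternatively one could quote \cite[Theorem~3.1]{bordemann1997nondegenerate} and renumber, but the self-contained argument above is equally short.)
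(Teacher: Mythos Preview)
Your argument is correct. The projection $\pi$ gives the lower bound immediately, and the annihilator filtration $W_m=(B^m)^\circ$ together with the computation $[L,W_m]\subseteq W_{m-1}$ yields $L^{2k+1}\subseteq W_1=0$; the index bookkeeping is clean.

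For comparison: the paper does not actually prove this proposition---it simply records that the statement ``comes from \cite[Theorem~3.1]{bordemann1997nondegenerate} but adapting indices to our situation.'' Your self-contained argument is therefore strictly more than what the paper supplies, and is essentially the standard proof behind that citation (Bordemann's Theorem~3.1 treats solvable and nilpotent lengths simultaneously via the analogous filtration). You already note this alternative yourself in the final parenthetical remark, so nothing is missing.
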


\begin{remark}
	This result cannot be improved. Indeed, in the following sections, we build 2-step quadratic Lie algebras from abelian ones (see Corollary \ref{cor:duong}).
\end{remark}

Now, let us find which is the centre and the square of these algebras. In general,
\begin{equation*}
	Z(T^*_w B) = \{b +\beta : b \in Z(B) \text{ and } w(b, b') + \beta \circ \ad b' = 0 \enspace \forall b' \in B\},
\end{equation*}
and
\begin{equation}\label{eq:tstar2}
	(T^*_w B)^2 = \spa \langle [b,b']_B + w(b,b') : b,b' \in B\rangle +
	\spa \langle \beta \circ \ad b : b \in B, \beta \in B^*\rangle,
\end{equation}

\noindent

\begin{lemma}\label{lem:centre_der}
	For any $V$ subspace of $B$, let define $V^\circ:=\{\beta \in B^*: \beta(V)=0\}$ and $V^\perp$ its orthogonal subspace in $T^*_wB$ with respect to quadratic form $q_B$. Then, $V^\perp=B\oplus V^\circ$ and $(V^\circ)^\perp= B^*\oplus V$ and:
	\begin{enumerate}[\quad(a)]
		\item $Z(B)^\circ=\spa \langle \beta \circ \ad b : b \in B, \beta \in B^*\rangle\subseteq (T^*_w B)^2$.
		\item $(B^2)^\circ=\{ \beta\in B^*: \beta\circ \ad b=0 \enspace \forall b \in B\}$.
		\item $Z(T^*_w B) \cap B= Z(B)\cap \rad w$ and $Z(T^*_w B) \cap B^*= (B^2)^\circ$.
		\item $\spa\langle w(b,b'): b,b' \in B\rangle\subseteq (\rad w)^\circ$.
	\end{enumerate}
\end{lemma}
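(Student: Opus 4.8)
The plan is to unwind each of the four assertions directly from the definitions of the product \eqref{eq:TwProduct}, the quadratic form \eqref{eq:TwForm}, the coadjoint representation \eqref{eq:coadjoint}, and the formulas already recorded for $Z(T^*_wB)$ and $(T^*_wB)^2$ just above the statement. First I would establish the two orthogonal-complement identities. Since $q_B$ is the hyperbolic pairing between $B$ and $B^*$, a vector $b+\beta$ is $q_B$-orthogonal to a subspace $V\subseteq B$ iff $\beta(v)=0$ for all $v\in V$, which puts no condition on $b$; hence $V^\perp=B\oplus V^\circ$. Dually, $b+\beta$ is orthogonal to $V^\circ$ iff $\gamma(b)=0$ for every $\gamma\in B^*$ vanishing on $V$, i.e. iff $b\in V$ (using that $B$ is finite-dimensional, so $(V^\circ)^\circ=V$ inside $B$), giving $(V^\circ)^\perp=B^*\oplus V$.

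Next I would treat (a). Applying the previous identity with $V=Z(B)$ gives $Z(B)^\circ=\{\beta\in B^*:\beta(Z(B))=0\}$; I must show this equals $\spa\langle\beta\circ\ad b:b\in B,\beta\in B^*\rangle$. The inclusion $\supseteq$ is immediate since $(\beta\circ\ad b)(z)=\beta([b,z])=0$ when $z\in Z(B)$. For $\supseteq$ in the other direction one notes the span of the $\beta\circ\ad b$ is exactly the annihilator of $\bigcap_{b}\ker\ad b=Z(B)$: a linear functional annihilating all $\beta\circ\ad b$ corresponds to an element $z$ with $\beta([b,z])=0$ for all $b,\beta$, hence $[b,z]=0$ for all $b$, i.e. $z\in Z(B)$; by finite-dimensional duality the span of the $\beta\circ\ad b$ is all of $Z(B)^\circ$. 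The containment in $(T^*_wB)^2$ then reads off the second summand of \eqref{eq:tstar2}. For (b), $(B^2)^\circ=\{\beta:\beta([b,b'])=0\ \forall b,b'\}=\{\beta:(\beta\circ\ad b)(b')=0\ \forall b,b'\}=\{\beta:\beta\circ\ad b=0\ \forall b\}$, which is just rewriting.

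For (c) I would compute $Z(T^*_wB)\cap B$ and $Z(T^*_wB)\cap B^*$ by specialising the displayed formula for $Z(T^*_wB)$. Taking $\beta=0$ forces $b\in Z(B)$ and $w(b,b')=0$ for all $b'$, i.e. $b\in Z(B)\cap\rad w$; conversely such $b$ clearly lies in the centre, using $\ad^*(b')(0)=0$. Taking $b=0$ forces $b\in Z(B)$ (vacuous) and $\beta\circ\ad b'=0$ for all $b'$, which by (b) is exactly $\beta\in(B^2)^\circ$; the converse is again immediate. For (d), if $z\in\rad w$ then $w(b,b')(z)=0$ for all $b,b'$, but by the cyclic identity \eqref{eq:cyclic} $w(b,b')(z)=w(z,b)(b')=0$, so every value $w(b,b')$ annihilates $z$; since this holds for all $z\in\rad w$, each $w(b,b')\in(\rad w)^\circ$, hence so does their span. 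I do not anticipate a genuine obstacle here; the one point requiring care is the finite-dimensional duality step in (a) (that the span of the $\beta\circ\ad b$ is the full annihilator of $Z(B)$, not merely contained in it), and the use of cyclicity rather than mere skew-symmetry in (d) — everything else is a direct substitution.
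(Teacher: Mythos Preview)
Your proposal is correct. The overall strategy---unwind definitions, use finite-dimensional duality, and invoke cyclicity for (d)---matches the paper, but the execution differs in a couple of places worth noting. For (a) you argue directly in $B^*$: the span of the functionals $\beta\circ\ad b$ is the annihilator of $\bigcap_b\ker\ad b=Z(B)$, by double-annihilator in finite dimension. The paper instead routes this through the quadratic form $q_B$ on $T^*_wB$: it shows that $X^\perp\cap B=Z(B)$ (where $X$ is the span in question), and then reads off $Z(B)^\circ=X$ from the preliminary identity $V^\perp=B\oplus V^\circ$. For (b) and the second half of (c) the paper again exploits the quadratic structure, applying the general relation $(A^2)^\perp=Z(A)$ from equation~\eqref{eq:centre_orto_der} to $T^*_wB$ and comparing orthogonal complements to get both statements at once; you instead prove (b) by a bare rewriting and (c) by specialising the displayed centre formula. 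Your route is more self-contained and elementary; the paper's buys a slight economy by treating (b) and half of (c) in one stroke via orthogonality. For (d) the two arguments are identical in substance. The caution you flag about needing finite-dimensionality in (a) is exactly the right one, and your observation that (d) genuinely uses cyclicity (not just skew-symmetry) is also on point.
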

\begin{proof}
	Let $X=\spa \langle \beta \circ \ad b : b \in B, \beta \in B^*\rangle$ and $x\in B$ such that $0=q_B(x, \beta \circ \ad b)=\beta ([b,x]_B)$, $\forall \beta \in B^*, \forall b\in B$. Previous equality is equivalent to $[b,x]_B=0$ $\forall b\in B$, i.e., $x\in Z(B)$. Hence $Z(B)=X^\perp\cap B$ and item (a) follows from $Z(B)^\perp=X \oplus B$, $X \subseteq Z(B)^{\circ}$ and equation~\eqref{eq:tstar2}. Now, from equation~\eqref{eq:centre_orto_der}, $(Z(T^*_w B) \cap B^*)^\perp=(T^*_w B)^2 +B^*=B^2\oplus B^*=((B^2)^\circ)^\perp$ which implies item (b) and second assertion in item (c). Note, the other equality in item (c) is straightforward. Finally, if $\beta \in (\rad w)^\perp \cap B^*$, then $\beta(a)=0$ $\forall a\in B$ such that $w(a,b)=0, \forall b\in B$. Since $w$ is cyclic, for a fixed $a\in B$, $w(a,b)(b')=w(b,b')(a)$ and we get (d) when $a \in \rad w$.
\end{proof}

From Lemma \ref{lem:centre_der} we get immediately that $Z(T^*_0B)=Z(B)\oplus (B^2)^\circ$ and $(T^*_0B)^2=B^2\oplus Z(B)^\circ$. So, $T^*_0B$ is reduced $2$-step if and only if $B$ is reduced $2$-step. Even more, this lemma also shows that we can build quadratics $2$-step Lie algebras from abelian ones in an easy way:

\begin{corollary}\label{cor:reduced}
	Let $B$ be an abelian Lie algebra. Then $(T^*_w B)^2=\spa \langle w(b,b') : b,b' \in B\rangle$ and $Z(T^*_w B)=\rad w \oplus B^*$. So, $T^*_w B$ is $2$-step if and only if $w$ is not null. Moreover, it is equivalent:
	\begin{enumerate}[\quad (a)]
		\item $(T^*_w B,q_B)$ is reduced, 
		\item $w$ is non-degenerate, 
		\item $B^*=\spa \langle w(b,b'): b,b'\in B\rangle$. 
	\end{enumerate}
\end{corollary}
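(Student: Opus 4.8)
The plan is to derive everything from Lemma~\ref{lem:centre_der} by specializing to the abelian case $B^2=0$, $Z(B)=B$. First I would substitute $B^2=0$ and $Z(B)=B$ into equation~\eqref{eq:tstar2}: the term $\spa\langle[b,b']_B+w(b,b'):b,b'\in B\rangle$ collapses to $\spa\langle w(b,b'):b,b'\in B\rangle$, and the term $\spa\langle\beta\circ\ad b:b\in B,\beta\in B^*\rangle$ vanishes since $\ad b=0$ for all $b$ (equivalently, $Z(B)^\circ=B^\circ=0$ by item (a)). This gives $(T^*_wB)^2=\spa\langle w(b,b'):b,b'\in B\rangle\subseteq B^*$. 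For the centre, item (c) of Lemma~\ref{lem:centre_der} gives $Z(T^*_wB)\cap B=Z(B)\cap\rad w=\rad w$ and $Z(T^*_wB)\cap B^*=(B^2)^\circ=B^*$; since $T^*_wB=B\oplus B^*$ splits, $Z(T^*_wB)=\rad w\oplus B^*$.

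Next I would handle the $2$-step dichotomy. Since $(T^*_wB)^2\subseteq B^*$ and $B^*\subseteq Z(T^*_wB)$, the product of any two elements is central, so $(T^*_wB)^3=0$ automatically; hence $T^*_wB$ is $2$-step precisely when $(T^*_wB)^2\neq 0$, i.e. when $w$ is not the zero map. (One should note $T^*_wB$ is automatically nilpotent here, being $2$-step or abelian, so the notion applies.)

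For the equivalence of (a), (b), (c), the cleanest route is $(a)\Leftrightarrow(b)$ then $(b)\Leftrightarrow(c)$. For $(a)\Leftrightarrow(b)$: $T^*_wB$ is reduced iff $Z(T^*_wB)\subseteq(T^*_wB)^2$; using the computed expressions, $Z(T^*_wB)=\rad w\oplus B^*$ and $(T^*_wB)^2\subseteq B^*$, so reducedness forces $\rad w\oplus B^*\subseteq B^*$, i.e. $\rad w=0$, which is exactly non-degeneracy of $w$; conversely if $\rad w=0$ then $Z(T^*_wB)=B^*$, and we must check $B^*\subseteq(T^*_wB)^2=\spa\langle w(b,b')\rangle$ — but this is where (c) enters, so it is more efficient to prove $(b)\Leftrightarrow(c)$ first and chain the implications. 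For $(b)\Leftrightarrow(c)$: the image $\spa\langle w(b,b'):b,b'\in B\rangle$ and the radical $\rad w=\{b:w(b,\blank)=0\}$ are related by the fact that $w$, being cyclic, induces a symmetric (in the appropriate sense) trilinear form $\phi_w$ on $B$ via Remark~\ref{rmk:2cocyc_3cocyc}, so $b\in\rad w$ iff $\phi_w(b,\blank,\blank)=0$ iff $b$ is orthogonal to the image span under the pairing $B^*\times B\to\ku$; thus $(\im w)^\circ$ inside $B$ equals $\rad w$, giving $\dim\im w=\dim B-\dim\rad w$, and $\im w=B^*$ iff $\rad w=0$. Equivalently one invokes item (d) of Lemma~\ref{lem:centre_der}, $\im w\subseteq(\rad w)^\circ$, together with the reverse containment coming from cyclicity, to get $\im w=(\rad w)^\circ$; then $\im w=B^*\Leftrightarrow(\rad w)^\circ=B^*\Leftrightarrow\rad w=0$.

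The main obstacle is the single nontrivial linear-algebra fact underlying $(b)\Leftrightarrow(c)$, namely that cyclicity of $w$ forces $\im w=(\rad w)^\circ$ rather than merely $\im w\subseteq(\rad w)^\circ$; everything else is bookkeeping and direct substitution into Lemma~\ref{lem:centre_der}. I would phrase this reverse inclusion as: if $\beta\in(\rad w)^\circ$, pick $a$ with $\beta=a^\flat$ under a complement to $\rad w$ — or, more invariantly, use that the map $a\mapsto w(a,\blank)$ has image $\im w$ and kernel $\rad w$, and cyclicity makes the bilinear pairing $(a,b)\mapsto w(a,b)$ symmetric enough that its "column space'' and the annihilator of its "null space'' coincide, which is the standard rank-nullity statement for a symmetric bilinear form read through the identification $\phi_w$.
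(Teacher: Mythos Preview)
Your argument is correct. One small point of care: from $Z\cap B=\rad w$ and $Z\cap B^*=B^*$ alone one cannot in general conclude $Z=\rad w\oplus B^*$, but here the full containment $B^*\subseteq Z$ lets you subtract the $B^*$-component of any $b+\beta\in Z$ and deduce $b\in Z\cap B=\rad w$, so the conclusion stands.

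Where you genuinely diverge from the paper is in the equivalence $(b)\Leftrightarrow(c)$. You work directly with $w$: cyclicity gives $(\im w)^\circ=\rad w$ (since $w(a,b)(c)=0$ for all $a,b$ iff $w(c,a)(b)=0$ for all $a,b$ iff $c\in\rad w$), and double annihilation in finite dimension yields $\im w=(\rad w)^\circ$, whence $\im w=B^*\Leftrightarrow\rad w=0$. The paper instead leans on the quadratic structure of $T^*_wB$: it uses equation~\eqref{eq:centre_orto_der}, $(A^2)^\perp=Z(A)$, together with $B^*$ being lagrangian, $(B^*)^\perp=B^*$. From $(b)$ one gets $Z=B^*$, hence $(T^*_wB)^2=Z^\perp=B^*$, which is $(c)$; and $(c)$ gives $(T^*_wB)^2=B^*$, hence $Z=(B^*)^\perp=B^*$, hence $\rad w=0$. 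Your route is self-contained and never touches the invariant form $q_B$; the paper's is shorter because it recycles a structural identity already established for quadratic algebras. The two are dual in the sense that the orthogonality relation $(A^2)^\perp=Z(A)$ in $T^*_wB$, read off on the $B$ and $B^*$ summands, \emph{is} the annihilator identity $(\im w)^\circ=\rad w$ you prove by hand.
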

\begin{proof}
	The first part follows easily from $B$ being abelian, item (d) of Lemma \ref{lem:centre_der}, and the general description of $(T^*_0B)^2$. Now, $(T^*_w B,q_B)$ is reduced if and only if $(T^*_w B)^2=B^*=Z(T^*_w B)$. Hence, items (a) and (c) are equivalent. Finally, $Z(T^*_w B)=B^*$ if $w$ is non-degenerate, and then $(T^*B,q_B)^2=Z(T^*_w B)^\perp=B^*$ by using equation~\eqref{eq:centre_orto_der}.
\end{proof}

\begin{corollary}\label{cor:duong}
	Let $(A,f)$ a quadratic Lie algebra. Then, $A$ is 2-step reduced nilpotent if and only if $A$ is isometrically isomorphic to a $T^*_w B$ extension of an abelian Lie algebra $B$ where $w$ is non-degenerate.
\end{corollary}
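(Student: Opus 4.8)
The plan is to prove the two implications separately, assembling results already available in the excerpt.

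For the ``if'' direction, suppose $A$ is isometrically isomorphic to $T^*_w B$ with $B$ abelian and $w$ non-degenerate; then $B\neq 0$ and, since $\rad w=0$, $w\neq 0$. Corollary~\ref{cor:reduced} applies verbatim: $T^*_w B$ is $2$-step because $w$ is not null, and it is reduced because $w$ is non-degenerate (equivalence (a)$\Leftrightarrow$(b) there). Since both being $2$-step nilpotent and being reduced are preserved by isometric isomorphisms, $A$ is $2$-step reduced nilpotent.

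For the ``only if'' direction, let $(A,f)$ be $2$-step reduced nilpotent. The key observation is that the centre is a Lagrangian ideal. Indeed, recall that for $2$-step nilpotent algebras reducedness means exactly $Z(A)=A^2$, and by the orthogonality relation~\eqref{eq:centre_orto_der} we have $Z(A)^\perp=(A^2)^\perp=Z(A)$; hence $Z(A)$ is an isotropic ideal with $\dim Z(A)=\tfrac12\dim A$, and in particular $\dim A$ is even. Now apply Bordemann's Theorem~\ref{thm:text_isometric} with $I=Z(A)$: $(A,f)$ is isometric to a $T^*$-extension $(T^*_w B,q_B)$, with $B\cong A/Z(A)$ as Lie algebras. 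Since $A$ is $2$-step, $[A,A]=A^2\subseteq Z(A)$, so $B=A/Z(A)$ is abelian. Finally, reducedness of $A$ transfers through the isometry to $T^*_w B$, and since $B$ is abelian, Corollary~\ref{cor:reduced} forces $w$ to be non-degenerate, which completes the equivalence.

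There is no serious obstacle here: the argument is a short assembly of Theorem~\ref{thm:text_isometric} and Corollary~\ref{cor:reduced}. The one point that deserves a moment's care is identifying the correct isotropic ideal to feed into Bordemann's theorem — namely $Z(A)=A^2$ — and verifying it is genuinely Lagrangian via~\eqref{eq:centre_orto_der}. The degenerate cases ($A$ abelian, or zero) are excluded automatically, since a $2$-step nilpotent algebra satisfies $A^2\neq 0$ and a reduced one then has $Z(A)=A^2\neq 0$.
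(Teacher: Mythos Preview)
Your proof is correct and follows essentially the same approach as the paper: identify $Z(A)=A^2=Z(A)^\perp$ as a Lagrangian ideal via~\eqref{eq:centre_orto_der}, invoke Theorem~\ref{thm:text_isometric} to realize $A$ as a $T^*$-extension of the abelian quotient $A/Z(A)$, and use Corollary~\ref{cor:reduced} for the converse and to conclude non-degeneracy of $w$. Your write-up is simply more explicit than the paper's terse version, particularly in spelling out why $w$ must be non-degenerate.
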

\begin{proof}
	If $A$ is $2$-step and reduced, $Z(A)=A^2=Z(A)^\perp$ is a lagrangian ideal and, from Theorem \ref{thm:text_isometric}, algebra $A$ is isometrically isomorphic to $T^*_w B$ and $B\cong A/A^2$, so $B$ is abelian. The converse follows from Corollary \ref{cor:reduced}.
\end{proof}

\begin{remark}
	Corollary \ref{cor:duong} provides an alternatively proof of Proposition 11 in \cite{Duong_2013}. It also shows the condition of $w$ being non-degenerate can be changed by that of the dual space $B^*$ being the linear span of the image of $w$.
\end{remark}


\subsection{Quadratic Lie algebras and $\bm{n}$-quadratic families}\label{sub:quadratic_families}
According to \cite{benito2019quadratic}, the classification of quadratic nilpotent Lie algebras can be reduced, in some categorical way, to the study of symmetric invariant bilinear forms on free nilpotent Lie algebras. As stated in~\cite[Propositions 1.4 and 1.5]{gauger_1973}, and denoting by $\fn_{d,t}$ the $t$-nilpotent Lie algebra on $d$ generators, any $t$-step nilpotent Lie algebra on $d$ generators ($d=\cdim A^2$, type of the algebra in the sequel) is a homomorphic image of $\fn_{d,t}/I$ with $I$ ideal such that $\fn^t_{d,2}\subsetneq I \subsetneq \fn^2_{d,2}$. This special relation allows us to build quadratic nilpotent Lie algebras by means of free nilpotent \cite[Proposition 4.1]{benito2017free}. The main idea to do this is summarized in the following result.

\begin{proposition}[Benito et al., 2017]
	Let $A=\fn_{d,t}/I$, with $I$ ideal such that $\fn^t_{d,2}\subsetneq I \subsetneq \fn^2_{d,2}$. Then, there exists a symmetric, invariant and non-degenerate bilinear form $f\colon A\times A \to \ku$ if and only if there exists a symmetric invariant bilinear form $\psi$ on $\fn^2_{d,2}$ such that $I=\fn_{d,t}^\perp$. The relation between $f$ and $\psi$ is given by $\psi (a,b)=f(a+I,b+I)$.
\end{proposition}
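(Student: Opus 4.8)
The plan is to establish the two directions of the biconditional separately, with the harder direction being the construction of $\psi$ from $f$ (or, equivalently, showing that the quotient form on $\fn_{d,t}/I$ determines and is determined by a form on $\fn^2_{d,2}$). First I would recall the structure of the situation: since $\fn^t_{d,2} \subsetneq I \subsetneq \fn^2_{d,2}$, the ideal $I$ sits inside the abelian ideal $\fn^2_{d,2}$ of $\fn_{d,t}$, and the quotient $A = \fn_{d,t}/I$ is a $t$-step nilpotent Lie algebra whose square $A^2 = \fn^2_{d,2}/I$ is central (indeed $A$ is $2$-step in the relevant case, but the argument is uniform). The key structural input is that, for a nilpotent quadratic Lie algebra, $(A^2)^\perp = Z(A)$ by equation~\eqref{eq:centre_orto_der}; applied to $A$, this forces $\dim A = \dim A^2 + \dim Z(A)$, which will pin down the dimension of $I$ and show it must equal $\fn_{d,t}^\perp$ for the appropriate $\psi$.

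For the forward direction, suppose $f$ is a symmetric invariant non-degenerate form on $A = \fn_{d,t}/I$. I would lift $f$ to a symmetric invariant (necessarily degenerate) bilinear form $\tilde f$ on $\fn_{d,t}$ by setting $\tilde f(a,b) = f(a+I, b+I)$; invariance is inherited because the quotient map is a Lie homomorphism, and the radical of $\tilde f$ contains $I$. The non-degeneracy of $f$ on the quotient says exactly that $\rad \tilde f = I$. Now I would restrict $\tilde f$ to $\fn^2_{d,2}$; call this restriction $\psi$. It is symmetric. For invariance of $\psi$ as a form on $\fn^2_{d,2}$ (regarded with its — trivial — bracket, or rather as a module), one uses that $\tilde f$ is invariant on all of $\fn_{d,t}$ and that $\fn^2_{d,2}$ is an ideal, so the ad-action of $\fn_{d,t}$ restricts appropriately; the invariance condition~\eqref{eq:lie_invariant} then descends. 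The crucial claim is that $I = \fn_{d,t}^\perp$, where $\perp$ is taken with respect to $\psi$ inside $\fn^2_{d,2}$: one inclusion is that $\fn_{d,t}^\perp \subseteq I$ since $\tilde f$ invariant and $\rad \tilde f = I$ forces anything orthogonal (via $\psi$, hence via $\tilde f$) to the generating part to lie in the radical; the reverse inclusion $I \subseteq \fn_{d,t}^\perp$ follows because $I \subseteq \rad \tilde f$ and $\psi$ is the restriction. I would make this precise using that $\fn_{d,t}$ is generated in degree one, so $\tilde f(a, x) = 0$ for all $a$ in degree one iff $a$ is in the radical.

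For the converse, suppose $\psi$ is a symmetric invariant form on $\fn^2_{d,2}$ with $I = \fn_{d,t}^\perp$. I would define $f$ on $A = \fn_{d,t}/I$ by $f(a+I, b+I) := \psi(a, b)$ — but here there is something to check, since $\psi$ is only defined on $\fn^2_{d,2}$, not on all of $\fn_{d,t}$. The point is that in the $2$-step (or degree-bounded) setting, the invariant form on $A$ is forced: $f(a+I, b+I)$ for $a, b$ of degree one is recovered from $f$ evaluated on brackets against degree-one elements, i.e. from $\psi$ on $\fn^2_{d,2}$ via the invariance identity $f([x,y]+I, z+I) = f(x+I, [y,z]+I) = \psi(x, [y,z])$ — so one reconstructs $f$ on $A^2 \times (A/A^2)$ from $\psi$ restricted to $\fn^2_{d,2} \times (\text{image of brackets})$, and $f$ vanishes on $A^2 \times A^2$ by centrality and nilpotency of the form. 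This is exactly the mechanism described earlier in the paper for $2$-step quadratic algebras (cf. Lemma~\ref{lem:chain}), so I would cite that structural fact. Then well-definedness of $f$ — independence of representatives modulo $I$ — is precisely the hypothesis $I = \fn_{d,t}^\perp$: changing $a$ by an element of $I \subseteq \fn^2_{d,2}$ changes $\psi(a, \cdot)$ by $\psi$ of something in $\fn_{d,t}^\perp$, which is zero. Symmetry and invariance of $f$ are inherited from $\psi$. Non-degeneracy of $f$: if $f(a+I, \cdot) = 0$ then $a$ is orthogonal (under the lift) to everything, forcing $a \in I$ by the dimension count $\dim A = \dim A^2 + \dim(A/A^2)$ together with $I = \fn_{d,t}^\perp$.

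The main obstacle I anticipate is making the reconstruction of $f$ from $\psi$ airtight: $\psi$ lives only on the degree-$\geq 2$ part, and one must argue that an invariant symmetric form on $\fn_{d,t}/I$ is completely and consistently determined by its values on (the image of) $\fn^2_{d,2}$, with no obstruction — this is where the hypothesis $\fn^t_{d,2} \subsetneq I \subsetneq \fn^2_{d,2}$ (so that $A$ has the expected two-layered shape with trivial behaviour above degree $t$ and nonzero square) and the orthogonality relation~\eqref{eq:centre_orto_der} do the real work. The routine parts (symmetry, bilinearity, descent of invariance) I would dispatch quickly; the dimension bookkeeping and the $I = \fn_{d,t}^\perp$ identity are where I would spend the effort.
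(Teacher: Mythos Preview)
The paper does not actually prove this proposition: it is quoted verbatim as \cite[Proposition 4.1]{benito2017free} and stated without proof. So there is no ``paper's own proof'' to compare against.

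More importantly, the statement as printed contains what appear to be systematic typos: every occurrence of $\fn^2_{d,2}$ and $\fn^t_{d,2}$ should almost certainly read $\fn_{d,t}^2$ and $\fn_{d,t}^t$ (terms of the lower central series of the free $t$-step algebra $\fn_{d,t}$), and---crucially---$\psi$ is meant to be a symmetric invariant bilinear form on \emph{all of} $\fn_{d,t}$, not merely on its square. With that reading, the condition $I=\fn_{d,t}^\perp$ simply says $I=\rad\psi$, and the relation $\psi(a,b)=f(a+I,b+I)$ makes immediate sense for arbitrary $a,b\in\fn_{d,t}$.

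Once the statement is corrected, the argument is essentially the standard ``a symmetric invariant form descends to a non-degenerate form on the quotient by its radical, and conversely any such form pulls back'': given $f$ on $A$, set $\psi(a,b)=f(a+I,b+I)$ on $\fn_{d,t}$; invariance is inherited via the surjective Lie homomorphism, and $\rad\psi=I$ exactly because $f$ is non-degenerate. Given $\psi$ with $\rad\psi=I$, the same formula defines $f$ on $A$, well-defined and non-degenerate by construction.

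Your proposal is not wrong in spirit, but you have been misled by the typo into a much harder problem: you take $\psi$ to live only on the derived subalgebra and then try to reconstruct an invariant form on the whole quotient from that partial data. That reconstruction (your converse direction) is where you correctly sense the ``main obstacle,'' and indeed it would require nontrivial additional argument---which the intended statement does not demand. Your forward direction is fine up to the point where you restrict $\tilde f$ to the square; that restriction step is unnecessary, and the subsequent claim ``$I=\fn_{d,t}^\perp$ with $\perp$ taken inside $\fn^2_{d,2}$'' does not type-check, since $\fn_{d,t}\not\subseteq\fn^2_{d,2}$. Drop the restriction, keep $\psi=\tilde f$ on the whole free algebra, and both directions become routine.
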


For $2$-step quadratic Lie algebras, this classification process can be reformulated by using the notion of $n$-quadratic family. This is a special set of skew symmetric matrices which encodes the structural constants of a quadratic $2$-step Lie algebra of type $n$ and dimension $2n$. This approach follows preliminary ideas and techniques suggested in \cite{Tsou_Walker_1957} and relates the classification of $2$-step quadratic to that of $3$-forms (see~\cite{noui1997algebres}).

\begin{definition}
	For any $n\geq 2$, a family $\{M_1, \ldots, M_n\}$ of matrices of order $n\times n$ with entries in $\mathbb{K}$ is called $n$-quadratic if the following properties are satisfied:
	\begin{enumerate}
		\item Every matrix $M_i$ is skew-symmetric.
		\item The $i$-th column of every $M_i$ is null.
		\item For any $j>i$, the $j$-th column of $M_i$ is the additive inverse of the $i$-th column of $M_j$.
	\end{enumerate}
	In case, matrix
	\begin{equation*}
		\mathfrak{F}(M_1, \ldots, M_n) = [M_{1<j}M_{2<j}\ldots M_{d-1<j}],
	\end{equation*}
	of order $n \times \frac{n(n-1)}{2}$ has rank $n$ (maximum) we say this is a \emph{non-degenerate $n$-quadratic family}. Here $M_{i<j}$ denotes the submatrix of $M_i$ given by the set of all $j$-th columns of $M_i$ such that $i<j$.
\end{definition}

Let $(A,\varphi)$ be a quadratic 2-step Lie algebra. Since $A^2\subseteq Z(A)$, $A$ is reduced if and only if $Z(A)=A^2$, and, from equation~\eqref{eq:centre_orto_der}, $\dim A=2n$ and $d=\cdim A^2=\dim A^2$. Otherwise, Theorem \ref{thm:decomposition} tells us $A$ decomposes as an orthogonal sum of ideals $\fn \oplus \fa$ where $\fn$ is $2$-step reduced (so even dimensional) and $\fa$ is abelian. We assume in the sequel $(A,\varphi)$ is a quadratic 2-step Lie algebra of dimension $2n$.

If even $2$-step, as stated in \cite[Theorem 8]{benito2019quadratic} for reduced (see also Corollary \ref{cor:lagrangian}, since $A^2$ is lagrangian), we can find a basis, $\{v_1, \ldots, v_n, z_1, \ldots, z_n\}$, where the Lie bracket~is
\begin{align*}
	[v_i, v_j]    & := \sum_{k=1}^n m_{ijk}\,z_k, \\
	[z_i, \blank] & := 0.
\end{align*}
While the bilinear form satisfies
\begin{equation*}
	\varphi(v_i,v_j) = 0,\qquad
	\varphi(z_i,z_j) = 0,\qquad
	\varphi(v_i,z_j) = \delta_{ij}.
\end{equation*}
This means its structure constants are determined by a (non-degenerate) family of $n$-quadratic matrices $\{M_i : 1 \leq i \leq n\}$. Here, $m_{ijk}$ is the entry $(k,j)$ of $M_i$, which is the same as saying $m_{ijk}$ is the entry $(n+k,j)$ of the matrix of the inner derivation $\ad v_i$. And, by properties of the inner derivations or of the $n$-quadratic family, we have
\begin{equation*}
	m_{ijk} = m_{jki} = m_{kij} = - m_{ikj} = -m_{jik} = -m_{kji}.
\end{equation*}
Even more, $\varphi([v_i,v_j], v_k)=m_{ijk}$, and the non-degeneration of the family is equivalent~to
\begin{equation*}
	\sum_{i=1}^k \im \ad v_i = A^2 = \spa\langle z_i : i=1,\ldots,n\rangle = Z(A).
\end{equation*}
And, we can recover the Lie product of $A$ from the matrix equation:
\begin{equation*}
	([v_1,v_2],.., [v_1,v_n],[v_2,v_3], ..,[v_2,v_n],.., [v_{n-1},v_n])=\\(z_1,.., z_n)\cdot \mathfrak{F}(M_1,.., M_n).
\end{equation*}
Previous arguments let us reformulate Theorem 8 in \cite{benito2019quadratic} as follows:

\begin{theorem}\label{thm:reducedFamily}
	Let $\{M_1, \ldots, M_n\}$ be a nonzero family of $n$-quadratic matrices of order $n\times n$ with entries in $\mathbb{K}$. On the vector space $\ku^{2n}$ with canonical basis $\{e_1, \dots, e_{2n}\}$, let consider the hyperbolic form $\varphi(e_i, e_{2n-i+1})=\varphi(e_{2n-i+1}, e_i)=1$ and $\varphi(e_i, e_j)=0$ otherwise, and the product given by bilinear extension of the brackets
	\begin{equation*}
		[e_i,e_j]= (e_{n+1}, \dots, e_{2n})\cdot \mathrm{Col}_j\,M_i,
	\end{equation*}
	with $\mathrm{Col}_j\,M_i$ being the $j$-th column of the matrix $M_i$. Then, $(\ku^{2n}, \varphi)$ is a quadratic $2$-step Lie algebra. Moreover, the following conditions are equivalent:
	\begin{enumerate}[\quad (a)]
		\item $(\ku^{2n}, \varphi)$ is reduced,
		\item $\{M_1, \ldots, M_n\}$ is a non-degenerate $n$-quadratic family,
		\item the derived algebra of $\ku^{2n}$ is just $W=\spa\langle e_{n+1}, \dots, e_{2n}\rangle$.
	\end{enumerate}

	\noindent Conversely, any $2$-step quadratic reduced $2n$-dimensional Lie algebra is isometrically isomorphic to $(\ku^{2n}, f)$ for some non-degenerate $n$-quadratic family.
\end{theorem}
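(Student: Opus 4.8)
The plan is to verify the two halves of the statement directly, leveraging that all the structural data here is exactly the same bilinear/trilinear data appearing in Lemma~\ref{lem:chain} and in the $n$-quadratic family definition. First I would check that $(\ku^{2n},\varphi)$ with the stated product is a Lie algebra: skew-symmetry of the bracket follows from property~(1) (each $M_i$ skew) together with property~(3) (the $j$-th column of $M_i$ equals minus the $i$-th column of $M_j$), which together give $m_{ijk}=-m_{jik}$; the Jacobi identity is automatic because $[e_i,e_j]\in W=\spa\langle e_{n+1},\dots,e_{2n}\rangle$ and $[W,\ku^{2n}]=0$, so every triple bracket vanishes. The same containment $[\ku^{2n},\ku^{2n}]\subseteq W$ and $W^2=0$ shows the algebra is $2$-step (it is nonzero since the family is nonzero). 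Invariance of $\varphi$ reduces to checking $\varphi([e_i,e_j],e_k)$ is totally skew in $i,j,k$: by the pairing $\varphi(e_a,e_{2n-a+1})=1$ one computes $\varphi([e_i,e_j],e_k)=m_{ijk}$ for $i,j,k\le n$ (and all other cases vanish because $W$ is isotropic and in the kernel of every $\ad$), and properties (1)--(3) of the family give precisely $m_{ijk}=m_{jki}=m_{kij}=-m_{ikj}$, i.e.\ the cyclic/alternating symmetry that is equivalent to $\varphi([x,y],z)+\varphi(y,[x,z])=0$. Non-degeneracy of $\varphi$ is clear since it is the standard hyperbolic form.

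Next I would establish the equivalence of (a), (b), (c). The derived algebra is $A^2=\sum_i \im\ad e_i$, and since $\ad e_i$ sends $e_j\mapsto (e_{n+1},\dots,e_{2n})\cdot\col_j M_i$ and kills $W$, we get $A^2=\spa\langle (e_{n+1},\dots,e_{2n})\cdot\col_j M_i : i<j\rangle$, which is exactly the column space of the block matrix $\mathfrak F(M_1,\dots,M_n)$ transported into $W$. Hence $A^2=W$ if and only if that matrix has rank $n$, giving (b)$\iff$(c). For (a)$\iff$(c): since the algebra is $2$-step, $A^2\subseteq W\subseteq Z(A)$ (the last because $W$ is in the kernel of every $\ad e_i$ and $\ad w=0$ for $w\in W$), and by the orthogonality relation~\eqref{eq:centre_orto_der}, $Z(A)=(A^2)^\perp$; since $W=W^\perp$ (it is lagrangian), $A^2=W$ forces $Z(A)=W=A^2$, i.e.\ reduced, while conversely if $A$ is reduced then $Z(A)=A^2$, and combined with $A^2\subseteq W\subseteq Z(A)$ this squeezes $A^2=W$. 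So all three are equivalent, with $\varphi$ automatically hyperbolic with lagrangian $W$.

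Finally, for the converse I would invoke the reduced $2$-step normal form already recorded in the text: given any reduced quadratic $2$-step Lie algebra $(A,\varphi)$ of dimension $2n$, its derived algebra $A^2=Z(A)$ is lagrangian (from~\eqref{eq:centre_orto_der} and reducedness), so by Corollary~\ref{cor:lagrangian} / Theorem~\ref{thm:text_isometric} one may choose a basis $\{v_1,\dots,v_n,z_1,\dots,z_n\}$ with $[v_i,v_j]=\sum_k m_{ijk}z_k$, $[z_i,\blank]=0$, and $\varphi(v_i,z_j)=\delta_{ij}$, $\varphi(v_i,v_j)=\varphi(z_i,z_j)=0$. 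Setting $(M_i)_{kj}:=m_{ijk}$, the total skew-symmetry $m_{ijk}=-m_{ikj}$ (from skew-symmetry of the bracket and invariance of $\varphi$) gives property~(1); $m_{iik}=0$ gives property~(2); and $m_{ijk}=-m_{jik}$ gives property~(3). After relabeling the basis as $e_1,\dots,e_{2n}$ with $e_{n+k}=z_k$ and reversing to match $\varphi(e_i,e_{2n-i+1})=1$, this is precisely $(\ku^{2n},\varphi)$ for the family $\{M_i\}$, and reducedness forces the family non-degenerate by the equivalence just proved; the map is an isometric isomorphism by construction.

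The main obstacle I expect is purely bookkeeping: keeping the index conventions consistent between ``column $j$ of $M_i$'' and ``entry $(n+k,j)$ of $\ad v_i$'', and between the orderings $(z_1,\dots,z_n)$ versus the reversed hyperbolic pairing $\varphi(e_i,e_{2n-i+1})=1$. There is no conceptual difficulty — everything follows from the three defining properties of an $n$-quadratic family matching, term by term, the total antisymmetry of the structure constants of an invariant $2$-step bracket — but the identification $A^2=W\iff\rank\mathfrak F=n$ must be stated carefully so that (b) and (c) are genuinely the same condition and not merely both implied by (a).
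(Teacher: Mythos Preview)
Your argument is sound and is essentially the paper's own: the paper does not give a separate proof of this theorem but presents it as a reformulation of \cite[Theorem~8]{benito2019quadratic}, with the paragraphs immediately preceding it supplying exactly the verification you outline (the totally alternating structure constants $m_{ijk}$, the identity $\varphi([v_i,v_j],v_k)=m_{ijk}$, and the equivalence $\sum_i\im\ad v_i=A^2=W\iff\rank\mathfrak F=n$). Your write-up is, if anything, more explicit and self-contained than what appears in the text.

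One concrete correction, which is precisely the bookkeeping obstacle you anticipated at the end. With the form literally written as $\varphi(e_i,e_{2n-i+1})=1$ one has $\varphi(e_{n+k},e_l)=\delta_{k,\,n-l+1}$ for $l\le n$, so
\[
\varphi([e_i,e_j],e_l)=m_{ij,\,n-l+1},
\]
not $m_{ijl}$; under this reversed pairing the invariance condition becomes $m_{ij,\,n-l+1}+m_{il,\,n-j+1}=0$, which does \emph{not} follow from the $n$-quadratic axioms (for $n=3$, $i=1$, $j=2$, $l=3$ it forces $m_{123}=0$). The discussion just before the theorem uses instead the aligned pairing $\varphi(v_i,z_j)=\delta_{ij}$, under which your claimed identity $\varphi([e_i,e_j],e_k)=m_{ijk}$ is correct and invariance reduces to $m_{ijk}+m_{ikj}=0$, i.e.\ property~(1). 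So when you ``relabel the basis \dots\ and reverse to match $\varphi(e_i,e_{2n-i+1})=1$'' you must set $z_k=e_{2n-k+1}$ rather than $z_k=e_{n+k}$ (equivalently, work throughout with $\varphi(e_i,e_{n+i})=1$ as in the text preceding the theorem). With that single reindexing fixed, every step of your proposal goes through verbatim.
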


\begin{remark}
	For any $n\neq 1,2,4$, there are no non-degenerate $n$-quadratic families according to Proposition 13 in \cite{benito2019quadratic}. But if $n=1,2,4$, there are not. Moreover, \cite[Theorem 14]{benito2019quadratic} solves the problem of isomorphisms of quadratic $2$-step reduced algebras in terms of a matrix relation between the non-degenerate quadratics families attached to them.
\end{remark}

\begin{remark}
	The approach to quadratic $2$-step Lie algebras given by free quadratic families allows us to introduce computational algorithms for building examples of this class of Lie algebras.
\end{remark}


\section{Equivalence theorem}

Despite methods introduced in Section~\ref{sec:methods} are apparently totally different, all three of them end up constructing the same type of algebras and, therefore, it makes sense they are equivalent in some way. The relationship among them appears in the following theorem.

\begin{theorem}\label{thm:equivalence}
	Let $B$ be a vector space with basis $\mathcal{B}=\{b_1, \ldots, b_n\}$, $n\geq 3$ and, let $w\colon B \times B \to B^*$ be a bilinear form where $w(b_i, b_j)(b_k) = c_{ijk}$. In the vector space $\mathfrak{L}=B \oplus B^*$ we define the following product and bilinear form $\phi$ for $b,b' \in B$ and $\beta, \beta' \in B^*$:
	\begin{equation*}
		[b + \beta, b' + \beta'] = w(b,b'), \qquad \phi(b + \beta, b' + \beta') = \beta(b') + \beta'(b).
	\end{equation*}
	Then, it is equivalent:
	\begin{enumerate}[(a)]
		\item $(\mathfrak{L},\phi)$ is a 2-step quadratic Lie algebra. \label{item:equiv_a}
		\item $w$ is a nonzero cyclic 2-cocycle and $(\mathfrak{L},\phi) = (T^*_w B,q_B)$. \label{item:equiv_b}
		\item For $\{b_1^*, \dots, b_n^*\}$ dual basis of $\mathcal{B}$, the chain of one-dimensional double extensions $\{(A_k,f_k)\}_{k=0}^n$ starting with $A_0=\{0\}, f_0=0$ and given by $\{(b_{k+1},d_k)\}_{k=0}^{n-1}$, where $A_k=\spa\langle b_i, b^*_i: i=1, \dots, k\rangle$, $d_{i-1}(b_j^*)=0$, and $d_{i-1}(b_j) = \sum_{k=1}^{i-1} c_{ijk} b_k^*$ for $j< i\leq n$, satisfies properties \emph{(NNP)} and \emph{(2SP)}, and  $(A_n,f_n)=(\mathfrak{L}, \phi)$. \label{item:equiv_c}
		\item The family of matrices $\{M_1, \dots, M_n\}_{i=1}^n$, where the entrance $(k,j)$ of $M_i$ is $c_{ijk}$, is a non-null $n$-quadratic family and defines algebra $(A, \varphi) = (\mathfrak{L}, \phi)$. 
	\end{enumerate}
\end{theorem}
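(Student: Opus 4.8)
The plan is to establish the chain of equivalences $(a)\Leftrightarrow(b)\Leftrightarrow(c)\Leftrightarrow(d)$ by exploiting the fact that every construction uses \emph{the same} structure constants $c_{ijk}$, so the only real content is matching the various defining conditions (``2-step quadratic'', ``cyclic 2-cocycle'', ``(NNP)+(2SP)'', ``non-null $n$-quadratic family'') against each other and checking that the four algebras literally coincide as bilinear-form-equipped vector spaces. First I would dispose of $(a)\Leftrightarrow(b)$: the product $[b+\beta,b'+\beta']=w(b,b')$ has no $\ad^*$ terms, so comparing with \eqref{eq:TwProduct} shows this is exactly $T^*_w B$ precisely when $\ad^*(b)(\beta')=0$ for all $b,\beta'$, i.e.\ when $B$ carries the zero bracket — but that is forced anyway, since $[b+\beta,b'+\beta']\in B^*$ while $[b,b']_B\in B$, so $\fL$ is Lie with this bracket only if $[\blank,\blank]_B=0$. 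Given $B$ abelian, Proposition~\ref{prop:cond2coc} says $(\fL,\phi)$ is a quadratic Lie algebra iff $w$ is a cyclic 2-cocycle, and Corollary~\ref{cor:reduced} (or a direct computation from \eqref{eq:tstar2}) gives that it is 2-step iff $w\neq0$; the 2-cocycle condition for abelian $B$ degenerates to cyclicity of $w$, so $(a)$ and $(b)$ say the same thing.

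Next I would treat $(b)\Leftrightarrow(d)$. Writing out that $w$ is cyclic in coordinates, $w(b_i,b_j)(b_k)=w(b_k,b_i)(b_j)=w(b_j,b_k)(b_i)$, becomes $c_{ijk}=c_{kij}=c_{jki}$, and skew-symmetry of $w$ (part of being a 2-cocycle, and also forced by $\phi$-invariance) gives $c_{ijk}=-c_{jik}$; together these are exactly the symmetry relations $m_{ijk}=m_{jki}=m_{kij}=-m_{ikj}=\cdots$ characterising an $n$-quadratic family, once we set the entry $(k,j)$ of $M_i$ equal to $c_{ijk}$ — the ``$i$-th column of $M_i$ is null'' clause is $c_{iik}=0$ and clause (3) is $c_{ijk}=-c_{jik}$ read as columns. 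Non-nullity of the family is non-nullity of $w$. Finally the Lie bracket in Theorem~\ref{thm:reducedFamily}, $[e_i,e_j]=(e_{n+1},\dots,e_{2n})\cdot\col_j M_i=\sum_k c_{ijk}e_{n+k}$, matches $[b_i,b_j]=w(b_i,b_j)=\sum_k c_{ijk}b_k^*$ under $e_i\leftrightarrow b_i$, $e_{n+k}\leftrightarrow b_k^*$, and the hyperbolic forms agree; so $(b)$ and $(d)$ build the identical algebra.

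For $(b)\Leftrightarrow(c)$ — the step I expect to be the main obstacle, since it requires genuinely unwinding the telescopic construction rather than just comparing coordinates — I would lean heavily on Lemma~\ref{lem:chain}. Assuming $(c)$, one checks the prescribed $d_k$ satisfy the hypotheses of a chained double extension, namely that each $d_{i-1}$ is $f_{i-1}$-skew-symmetric and obeys \eqref{eq:conditions2stepSimplified} — this follows from the cyclicity relations among the $c_{ijk}$, which make $f_{i-1}(d_{i-1}(b_j),b_k)$ alternating — and that (NNP) holds (some $c_{ijk}\neq0$, i.e.\ $w\neq0$) and (2SP) holds ($\im d_k\subseteq A_{k,2}\subseteq\ker d_k$, immediate from the defining formulas $d_{i-1}(b_j^*)=0$ and $d_{i-1}(b_j)\in\spa\langle b_1^*,\dots,b_{i-1}^*\rangle$). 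Then Lemma~\ref{lem:chain}(a) computes $[b_i,b_j]_n=\sum_k D_{ijk}b_k^*$ with $D_{ijk}=f_{i-1}(d_{i-1}(b_i),b_j)=c_{ijk}$ (after checking the sign/permutation bookkeeping in the definition of $D_{ijk}$ unwinds to exactly $c_{ijk}$ using cyclicity), and gives the same hyperbolic $f_n$; hence $(A_n,f_n)=(T^*_w B,q_B)$, which is 2-step quadratic by $(b)$, so in particular $n\geq3$ is needed (for $n\le2$ the family is forced to vanish, as remarked after Theorem~\ref{thm:reducedFamily}). Conversely, from $(b)$ the structure constants $c_{ijk}$ are cyclic and not all zero, so the $d_k$ defined in $(c)$ are legitimate skew-symmetric derivations and the resulting chain satisfies (NNP)+(2SP) with final algebra $(\fL,\phi)$ by the same Lemma~\ref{lem:chain} computation. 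The only delicate point throughout is the permutation-sign bookkeeping in $D_{ijk}$ and in the alternating extension $\hat w_k$, but this is exactly the content already packaged in Lemma~\ref{lem:chain}, so invoking it reduces the obstacle to a finite check that the sign conventions are mutually consistent.
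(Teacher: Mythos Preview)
Your proposal is correct and follows essentially the same strategy as the paper: both reduce everything to the coordinate identities $c_{ijk}=c_{kij}=-c_{jik}$, using Proposition~\ref{prop:cond2coc} for $(a)\Leftrightarrow(b)$ and Lemma~\ref{lem:chain} for the link with $(c)$, the only organisational difference being that the paper closes a cycle $(b)\Rightarrow(c)\Rightarrow(d)\Rightarrow(b)$ while you argue $(b)\Leftrightarrow(d)$ and $(b)\Leftrightarrow(c)$ directly. One small slip worth fixing: for abelian $B$ the $2$-cocycle identity is vacuous (both sides of the cocycle equation vanish), so that condition degenerates to mere skew-symmetry, not to cyclicity --- cyclicity is the \emph{separate} invariance requirement from Proposition~\ref{prop:cond2coc}(b) --- but this does not damage your argument since you invoke that proposition correctly anyway.
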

\begin{proof}
	The construction given in this theorem is exactly the $T^*$-extension one, so $(\mathfrak{L}, \phi)$ is $(T^*_w B,q_B)$, the $T^*$-extension of the abelian Lie algebra $B$ by $w$. This proves the equivalence between \eqref{item:equiv_a} and \eqref{item:equiv_b} using Proposition~\ref{prop:cond2coc}.

	Assume now (b) holds and let decompose $A_{k}=A_{k,1}\oplus A_{k,2}$ as in equation~\eqref{eq:decomposition_Ak}. Note that $A_n=\mathfrak{L}$ (as vector spaces) and $f_k:=\phi\big|_{A_k\times A_k}$ are as in Remark~\ref{rmk:chained-bilform}. In particular, $f_n=\phi$. So, the chain will be of one-dimensional double extensions if and only if $d_{i-1}\in \der_{f_{i-1}}A_{i-1}$. Since $B^*$ is $\phi$-isotropic and $d_{i-1}(b_j^*)=0$, this assertion is equivalent to  $\lambda_{ijs}=0$ where:
	\begin{equation*}
		\lambda_{ijs}=f_{i-1}(d_{i-1}(b_j), b_s)+f_{i-1}(b_j, d_{i-1}(b_s))=\varphi(d_{i-1}(b_j), b_s)+\varphi(b_j, d_{i-1}(b_s)).
	\end{equation*}
	Now from $d_{i-1}(A_{i-1})\subseteq A_{{i-1},2}$, $\lambda_{ijs}=0$ if $s\geq i$. Otherwise $\lambda_{ijs}=c_{ijs}+c_{isj}$ and it is also null because of $w$ is cyclic and skew. Moreover since $A_{k,1}=\spa\langle b_1, \ldots, b_k\rangle$ and $ A_{k,2}=\spa\langle b_1^*, \ldots, b_k^*\rangle$ the chain satisfies property (2SP). Finally, from $w\neq 0$ we have that $c_{i_0jk}\neq 0$ for some $i_0$ index, thus $d_{i_0-1}\neq 0$ and the chain satisfies (NNP).
	Next, observe that from Lemma \ref{lem:chain}, the chain described in \eqref{item:equiv_c} ends up in a quadratic Lie algebra $(A_n, f_n)$ such that
	\begin{alignat*}{3}
		[b_i, b_j]_n(b_k) & = D_{ijk}\quad\mathrm{and}\quad & [\beta, \blank]_n=0 \quad \forall \beta\in B^*,
	\end{alignat*}

	where scalars $D_{ijk}$ are defined in that lemma. But $[b_i, b_j]_A(b_k)=c_{ijk}$ and for $j,k<i$, with $j\neq k$ we have
	\begin{align*}
		c_{ijk}=f_{i-1}(d_{i-1}(b_j), b_k) & \underset{_{j<k<i}}{=}D_{jki}=\sgn((i\,j\,k))D_{ijk}=D_{ijk}, \\
		c_{ijk}=f_{i-1}(d_{i-1}(b_j), b_k) & \underset{_{k<j<i}}{=}D_{jki}=\sgn((j\,k\,i))D_{ijk}=D_{ijk}.
	\end{align*}

	Now, from $w$ being cyclic and skew we get $c_{ijk} = \sgn(\sigma)c_{\sigma(i)\sigma(j)\sigma(k)}$ for every permutation $\sigma$ and $c_{iik}=0$. So we have $[b_i, b_j]_A(b_k)=c_{ijk}=D_{ijk}=[b_i, b_j]_n(b_k)$. Hence, $[\blank, \blank]_n=[\blank, \blank]_A$ and $(\mathfrak{L},\phi)=(A_n, f_n)$ as quadratic Lie algebras.

	To prove that (c) implies (d) just apply Lemma \ref{lem:chain} taking into account that $\sgn(\sigma)D_{\sigma(i)\sigma(j)\sigma(k)}=D_{ijk}=c_{ijk}$ and $[b_i, b_j]_n=\sum_{k=1}^nc_{ijk}b_k^*$. So, the entry $c_{ijk}$ of every matrix $M_i$ described in (d) is the entry in the position $(n+k,j)$ of the matrix of the inner derivation $\ad b_i$. This proves the matrix family is $n$-quadratic. Finally, the definition of $n$-quadratic family yields to $w$ being a nonzero cyclic $2$-cocycle.
\end{proof}

Note that, with this theorem, we can also check the reduced conditions required in each method are equivalent among them.

Once at this point, we are going to see that we can move easily between the three methods directly from their respective constructions previously given in this paper.

\begin{itemize}
	\item In a chain $\{(A_k,f_k)\}_{k=1}^n$ of one-dimensional double extensions we consider a basis $\{b_{n},\ldots,b_1,b_1^*,\ldots,b_{n}^*\}$ as before. Then
	      \begin{alignat*}{3}
		      [b_i, b_j]   & = \sum_{k=1}^{n} D_{ijk}e_k^*,\qquad & [b_i^*, \cdot\,] & = 0,           \\
		      f_n(b_i,b_j) & = f_n(b_i^*,b_j^*) = 0,\qquad        & f_n(b_i,b_j^*)   & = \delta_{ij}.
	      \end{alignat*}

	\item In $(T^*_wB,q_B)$ with basis $\{e_1,\ldots, e_n, e_1^*, \ldots, e_n^*\}$
	      \begin{equation}\label{mth:tw}
		      \begin{alignedat}{3}
			      [e_i, e_j] &= w(e_i,e_j) = \sum_{k=1}^{n} w_{ijk}e_k^*,\qquad  &[e_i^*, \cdot\,] &= 0,\\
			      q_B(e_i,e_j) &= q_B(e_i^*,e_j^*) = 0,\qquad   &q_B(e_i,e_j^*) &= \delta_{ij}.
		      \end{alignedat}
	      \end{equation}

	\item An $n$-quadratic family $\{M_1, \ldots, M_n\}$ where $m_{ijk}$ is the entry $(j,k)$ of $M_i$ defines over the basis $\{v_1,\ldots,v_n, z_1, \ldots, z_n\}$ of $A$ the quadratic Lie algebra $(A,\varphi)$ where
	      \begin{alignat*}{3}
		      [v_i, v_j]       & = \sum_{k=1}^{n} m_{ijk}z_k,\qquad & [z_i, \cdot\,]   & = 0,           \\
		      \varphi(v_i,v_j) & = \varphi(z_i,z_j) = 0,\qquad      & \varphi(v_i,z_j) & = \delta_{ij}.
	      \end{alignat*}
\end{itemize}
So the equivalence comes from just renaming
\begin{equation*}
	\begin{array}{rcccl}
		b_{i}   & \longleftrightarrow & e_i     & \longleftrightarrow & v_i,     \\
		b_{i}^* & \longleftrightarrow & e_i^*   & \longleftrightarrow & z_i,     \\
		f_n     & \longleftrightarrow & q_B     & \longleftrightarrow & f,       \\
		D_{ijk} & \longleftrightarrow & w_{ijk} & \longleftrightarrow & m_{ijk}.
	\end{array}
\end{equation*}

Therefore, if we have a $n$-quadratic family of matrices with coefficients $m_{ijk}$ we can define the equivalent $(T_w^*B, q_B)$ extension taking
\begin{equation*}
	w(b_i,b_j)(b_k) = w_{ijk} = m_{ijk}.
\end{equation*}
And we can also obtain a chain of one-dimensional double extensions taking
\begin{align*}
	d_{i-1}\colon A_{i-1} & \to A_{i-1}                                                                      \\
	b_j                   & \mapsto \sum_{k=1}^{i-1}w_{ijk}\,b_k^* = \sum_{k=1}^{i-1}w(e_i,e_j)(e_k)\,b_k^*, \\
	b_j^*                 & \mapsto 0.
\end{align*}
And vice versa, if we have built a chain, we can get the equivalent $T_w^*B$ if we take
\begin{equation*}
	w(e_i,e_j) = \sum_{k=1}^{n}D_{ijk}\,e_k^* = \sum_{k=1}^{n}f_n([b_i,b_j],b_k)\,e_k^*.
\end{equation*}
And, this also defines our $n$-quadratic family of matrices taking $m_{ijk} = D_{ijk}$.

\begin{example}
	When we try to generate a generic 2-step quadratic Lie algebra of dimension $n$ we end up with $n(n-2)(n-4)/48$ parameters, a number that grows pretty fast. Even in the 10-dimensional algebra we have 10 parameters in its general form, despite all of them are isometrically isomorphic. These parameters can be observed in any of the three equivalent constructions. For example, when constructing a (2SP) and (NNP) chain of one-dimensional double extensions we obtain the following derivations:
	\begin{gather*}
		d_0 = d_1 = 0, d_2 = \left(
		\begin{array}{cc|c}
				\multicolumn{2}{c|}{\bm{0}} & \bm{0}  \\\hline
				-D_{123} & 0 & \multirow{2}{*}{$\bm{0}$} \\
				0 & D_{123}\\
			\end{array}
		\right),
		d_3 = \left(
		\begin{array}{ccc|c}
				\multicolumn{3}{c|}{\bm{0}} & \bm{0} \\\hline
				-D_{134} & -D_{124} & 0 & \multirow{3}{*}{$\bm{0}$} \\
				-D_{234} & 0 & D_{124}\\
				0 & D_{234} & D_{134}\\
			\end{array}
		\right),\\
		d_4 =
		\left(
		\begin{array}{cccc|c}
				\multicolumn{4}{c|}{\bm{0}} & \bm{0} \\\hline
				-D_{145} & -D_{135} & -D_{125} & 0 & \multirow{4}{*}{$\bm{0}$} \\
				-D_{245} & -D_{235} & 0 & D_{125} \\
				-D_{345} & 0 & D_{235} & D_{135} \\
				0 & D_{345} & D_{245} & D_{145}\\
			\end{array}
		\right)
	\end{gather*}
	These same parameters appear in $T^*$-extensions or in a more condensed way in $5$-quadratic families, where $\mathfrak{F}(M_1,\ldots,M_5)$ is
		{\small\begin{equation*}
				\begin{pmatrix}
					0       & 0        & 0        & 0        & m_{123} & m_{124}  & m_{125}  & m_{134} & m_{135}  & m_{145} \\
					0       & -m_{123} & -m_{124} & -m_{125} & 0       & 0        & 0        & m_{234} & m_{235}  & m_{245} \\
					m_{123} & 0        & -m_{134} & -m_{135} & 0       & -m_{234} & -m_{235} & 0       & 0        & m_{345} \\
					m_{124} & m_{134}  & 0        & -m_{145} & m_{234} & 0        & -m_{245} & 0       & -m_{345} & 0       \\
					m_{125} & m_{135}  & m_{145}  & 0        & m_{235} & m_{245}  & 0        & m_{345} & 0        & 0       \\
				\end{pmatrix}
			\end{equation*}}
	All this complexity in terms of classification can be reduced using the next section. For instance, we will see all algebras in this example are isometrically isomorphic to the one where $D_{123} = D_{145} = 1$ or $m_{123} = m_{145} = 1$ and the rest of the entries are zero, named as $\mathfrak{L}_{5,1}$ in the following section.
\end{example}

\section{Trivectors and 2-step classification}

In this section, we follow the main ideas given in \cite[Section 3]{noui1997algebres}. For basic notions on multilinear algebra see \cite[Appendix~B]{fulton2013representation}.

The classification of quadratic $2$-step Lie algebras of dimension $2n$ can be reduced to that of trilinear alternating forms or trivectors over a $n$-dimensional vector space $V$. In this section we will explain \emph{why and how} under the scope of previous construction methods of quadratic algebras. We point out that, whereas the problem of classifying bilinear alternating forms is elemental, the classification of trivectors seems tractable only for small values of $n$.

Let $\{e_1, \dots, e_n\}$ be a basis of $V$. The exterior power $\Lambda^m V$ or $\alt^m V$ is a vector space associated to a universal alternating multilinear form
\begin{align*}
	\wedge\colon V \times \dots \times V & \to \Lambda^m V                       \\
	(v_1,\ldots, v_m)                    & \mapsto v_1 \wedge \ldots \wedge v_m.
\end{align*}
The dimension of $\Lambda^m V$ is $\binom{m}{n}$, and $\{e_{i_1} \wedge \cdots \wedge e_{i_m} : 1\leq i_1 < \ldots < i_m\leq n\}$ is its standard basis. Every element of $\Lambda^m V$ is called a $m$-vector. So a trivector is simply an element of $\Lambda^3 V$. Therefore, every trivector can be expressed as a linear combination of their corresponding basis $\{e_i\wedge e_j \wedge e_k : 1\leq i<j<k\leq 3\}$.

If $V^*$ is the dual space of $V$, $\varphi_i\in V^*, v_i\in V$, the map $\iota\colon \Lambda^m V^*\to (\Lambda^m V)^*$ given explicitly as
\begin{align*}
	(\varphi_1 \wedge \ldots \wedge \varphi_m) & \mapsto
	\left(
	v_1 \wedge \ldots \wedge v_m
	\mapsto
	\sum_{\sigma \in S_m} \sgn(\sigma) \prod_{i=1}^{m} \varphi_{\sigma(i)}(v_i)  = \det{(\varphi_j(v_i))}
	\right)
\end{align*}
is an  isomorphism. The elements of $(\Lambda^m V)^*$ are named $m$-alternating forms or $m$-forms.  We also note that $\iota^{-1}$ sends the linear form $(e_{i_1} \wedge \ldots \wedge e_{i_m})^*$ back into $e_{i_1}^* \wedge \ldots \wedge e_{i_m}^*$. Since $(\Lambda^m V)^*$ is isomorphic to $\Lambda^m V$, there is no difference between $m$-vectors and $m$-forms.

Now that we know what a trivector is, we can see its relationship with reduced quadratic $2$-step Lie algebras. In order to see it, we can use $T^*$-extensions of abelian Lie algebras as mentioned in Corollary~\ref{cor:duong}.
Here, every algebra $T^*_w  B$ obtained of the same dimension differs only in the mapping $w\colon B \times B \to B^*$, as $B$ is an abelian algebra and the bilinear form is defined in the same way.
At this point, we can define $\phi_w\colon \Lambda^3B\to \mathbb{K}$ taking $\phi_w(b_1,b_2,b_3) = w(b_1,b_2)(b_3)$.
Note $\phi_w \in (\Lambda^3B)^* \cong \Lambda^3 B^*$ is a trivector thanks to the bilinear map $w$ is cyclic, so satisfies equation~\eqref{eq:cyclic}, and skew-symmetric (see Remark~\ref{rmk:2cocyc_3cocyc}).
In fact, the set $\{w_{ijk}=w(e_i, e_j)(e_k) : i < j < k\}$ seen in equation~\eqref{mth:tw} is simply the coordinates of the trivector $\phi_w$ in the standard dual basis, $(e_i\wedge e_j\wedge e_k)^*\sim e_i^*\wedge e_j^*\wedge e_k^*$.
Therefore, every quadratic 2-step Lie algebra can be defined from a trivector and also a quadratic $2$-step Lie algebra gives us a trivector. In this way, we arrive at the bijection
\begin{equation}\label{eq:biy_coc_tri}
	\Delta\colon\{w\in Z^2(B, B^*): w \ \mathrm{is\ cyclic}\}\to (\Lambda^3B)^*, \quad
	w \mapsto \phi_w
\end{equation}
given by the expression $w(e_i,e_j)(e_k) = \phi_w(e_i,e_j,e_k)$.

Even more, $\ker \phi_w=\{x\in B: \phi_w(x, \blank, \blank)=0\}=\rad w$. Thus $\Delta$ sends a non-degenerate $w$ into a trivector $\phi_w$ such that $\ker \phi_w=0$, and conversely. The nullity of $\ker \phi_w$ is equivalent to say that $\phi_w$ is a trivector of (maximal) rank equal to $\dim B$. Following \cite{cohen1988trilinear}, the rank of a trivector $\phi \in \Lambda^3 V$ is $\rank \phi=\dim V-\dim \ker \phi$. The rank of $\phi$ agrees with the dimension of the smallest subspace $W$ of $V$ such that $\phi \in \Lambda^3W$ (see \cite[Section 1]{noui1994formes}).

But the important point is that not only a bijection between quadratic $2$-step Lie algebras and trivectors exits. It is the fact the bijection maps isometrically isomorphic $2$-step $T^*$-extensions into equivalent trivectors with respect to the natural equivalence relation given by the action of the general linear group (see Definition~\ref{def:equivalentTrivector}).

\begin{definition}\label{def:equivalentTrivector}
	We say two trivectors $\phi_1,\, \phi_2 \in \Lambda^3 V$ are equivalent if there exist $\sigma \in \text{GL}(V)$ such that  $\phi_1(x,y,z)=\phi_2(\sigma(x),\sigma(y),\sigma(z))$ for every $x,y,z \in V$. Hence $\sigma \cdot \phi_1=\phi_2$, letting $\sigma$ act on the trivectors by means of $(\sigma\cdot \phi)(x,y,z)=\phi(\sigma^{-1}(x),\sigma^{-1}(y),\sigma^{-1}(z))$.
\end{definition}

\begin{theorem}\label{thm:trivector-classification}
	Let $B$ be a Lie algebra and $B^*$ its coadjoint module, $w, w_1, w_2\in Z^2(B,B^*)$ and cyclic. The map $\Delta$ defined in equation~\eqref{eq:biy_coc_tri} is an involutive bijection satisfying the following properties:
	\begin{enumerate}[\quad (a)]
		\item $w$ is non-degenerate if and only if $\rank (\phi_w)=\dim B$.
		\item If $B$ is abelian and $w_1, w_2$ are non-degenerate, $T_{w_1}^*B$ and $T_{w_2}^*B$ are isometrically isomorphic if and only if $\phi_{w_1}$ and $\phi_{w_2}$ are equivalent trivectors.
	\end{enumerate}
\end{theorem}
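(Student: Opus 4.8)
The plan is to verify the two listed properties in turn, leaning on the structural results already established. Property (a) is essentially immediate: the excerpt already records that $\ker\phi_w=\{x\in B:\phi_w(x,\blank,\blank)=0\}=\rad w$, so $w$ is non-degenerate (i.e. $\rad w=0$) exactly when $\ker\phi_w=0$, and by the definition of rank following \cite{cohen1988trilinear}, $\rank(\phi_w)=\dim B-\dim\ker\phi_w$, which equals $\dim B$ precisely in that case. I would also remark that $\Delta$ is involutive because, under the identification $(\Lambda^3B)^*\cong\Lambda^3B^*$ described via $\iota$, the formula $w(e_i,e_j)(e_k)=\phi_w(e_i,e_j,e_k)$ is symmetric in the roles of $w$ and $\phi_w$ on basis vectors, so applying $\Delta$ twice returns the original cyclic $2$-cocycle; that $\Delta$ is a well-defined bijection onto $(\Lambda^3B)^*$ follows from Remark~\ref{rmk:2cocyc_3cocyc} together with the preceding discussion, since cyclicity plus the $2$-cocycle condition for $w$ correspond exactly to $\phi_w$ being a $3$-form (the $3$-cocycle condition over $\ku$ with the trivial representation is vacuous when $B$ is abelian, which is the case of interest in (b)).

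For property (b), assume $B$ is abelian and $w_1,w_2$ are non-degenerate. First I would settle the easy direction. Given $\sigma\in\mathrm{GL}(B)$ with $\phi_{w_1}(x,y,z)=\phi_{w_2}(\sigma x,\sigma y,\sigma z)$, I would build an explicit isometric isomorphism $\Phi\colon T^*_{w_1}B\to T^*_{w_2}B$ by setting $\Phi(b)=\sigma(b)+(\text{correction in }B^*)$ and $\Phi(\beta)=(\sigma^{-1})^*(\beta)=\beta\circ\sigma^{-1}$ on $B^*$. Since both algebras are $2$-step with $[B,B]\subseteq B^*$ and $[B^*,\blank]=0$, checking $\Phi$ is a Lie homomorphism reduces to $\Phi([b,b'])=[\Phi b,\Phi b']$, i.e. $\Phi(w_1(b,b'))=w_2(\sigma b,\sigma b')$; evaluating both sides on an arbitrary $\sigma b''\in B$ turns this into the trivector equivalence identity $w_1(b,b')(b'')=w_2(\sigma b,\sigma b')(\sigma b'')$, which is exactly the hypothesis. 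The compatibility $q_B(\Phi u,\Phi v)=q_B(u,v)$ follows from the dual pairing being preserved by $\sigma$ against $(\sigma^{-1})^*$, i.e. $(\beta\circ\sigma^{-1})(\sigma b)=\beta(b)$; the $B^*$-correction term in $\Phi(b)$ can be taken to be zero here since $B$ is abelian (no coadjoint action), so in fact $\Phi=\sigma\oplus(\sigma^{-1})^*$ works.

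The harder direction is to recover a trivector equivalence from an isometric isomorphism $\Phi\colon T^*_{w_1}B\to T^*_{w_2}B$, and this is where the main obstacle lies: an arbitrary such $\Phi$ need not respect the splitting $B\oplus B^*$. By Corollary~\ref{cor:reduced}, non-degeneracy of $w_i$ makes $T^*_{w_i}B$ reduced $2$-step, so $Z(T^*_{w_i}B)=(T^*_{w_i}B)^2=B^*$ is the unique lagrangian ideal of this kind; hence $\Phi(B^*)=B^*$, and $\Phi$ induces an automorphism $\bar\Phi$ of the quotient, which is the abelian algebra $B$, giving $\sigma:=\bar\Phi\in\mathrm{GL}(B)$. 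One then has $\Phi(b)=\sigma(b)+\mu(b)$ for some linear $\mu\colon B\to B^*$, and $\Phi|_{B^*}$ is some $\tau\in\mathrm{GL}(B^*)$. The isometry condition $q_B(\Phi b,\Phi b')=q_B(b,b')=0$, $q_B(\Phi\beta,\Phi\beta')=0$, and $q_B(\Phi b,\Phi\beta)=q_B(b,\beta)=\beta(b)$ forces $\tau=(\sigma^{-1})^*$ together with a symmetry constraint on $\mu$; the homomorphism condition on $[b,b']$ then reads $\tau(w_1(b,b'))=w_2(\sigma b,\sigma b')$, and composing with $\tau^{-1}=\sigma^*$ and evaluating on $b''$ yields $w_1(b,b')(b'')=w_2(\sigma b,\sigma b')(\sigma b'')$, i.e. $\phi_{w_1}=\sigma^{-1}\cdot\phi_{w_2}$ in the notation of Definition~\ref{def:equivalentTrivector}. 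The cyclic-symmetry of $w_1,w_2$ guarantees $\mu$'s contribution to the bracket identity cancels out (it lands in $B^*$, which is central), so no further obstruction arises, and combining both directions with part (a) completes the proof.
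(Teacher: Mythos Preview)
Your proposal is correct and follows essentially the same route as the paper. Both arguments hinge on the same observation for the forward direction of (b): since $w_i$ is non-degenerate, Corollary~\ref{cor:reduced} gives $Z(T^*_{w_i}B)=B^*$, so any isometric isomorphism $\Phi$ sends $B^*$ to $B^*$ and induces $\sigma\in\mathrm{GL}(B)$ on the quotient; the trivector equivalence then drops out of the homomorphism and isometry conditions. The only cosmetic difference is that the paper compresses your component-by-component analysis of $\Phi$ into a single line using the invariance of $q_B$ directly, writing $\phi_{w_1}(x,y,z)=q_B([x,y]_{w_1},z)=q_B([\Phi x,\Phi y]_{w_2},\Phi z)=q_B([\sigma x,\sigma y]_{w_2},\sigma z)=\phi_{w_2}(\sigma x,\sigma y,\sigma z)$, which bypasses the need to identify $\tau=(\sigma^{-1})^*$ explicitly (the $\mu$-terms vanish because brackets land in the isotropic ideal $B^*$).
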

\begin{proof}
	For arbitrary $B$, $\Delta$ is well defined according to Remark \ref{rmk:2cocyc_3cocyc}. Thus (a) follows from previous comments. Before proving item (b), we recall that Lie bracket of $T^*_w B=B\overset{w}{\oplus} B^*$ is given by $[a+\alpha,b+\beta]_w=w(a,b)$ if $B$ is abelian and, from Corollary \ref{cor:reduced}, $Z(T^*_w B)=B^*$ if $w$ is non-degenerate. Hence, assuming $B$ abelian and $w_1, w_2$ non-degenerate, for a given isometrically isomorphism  $\varphi$ from $T_{w_1}^*B$ onto $T_{w_2}^*B$, we have $\varphi(Z(T_{w_1}))=Z(T_{w_2})=B^*$, thus $T_{w_2}^*B=B^*\overset{w_2}{\oplus} \varphi(B)=B^*\overset{w_2}{\oplus}B$. This implies that $\sigma=\pi_B\circ \varphi\in \text{GL}(B)$, where $\pi_B$ is the projection map from $T_{w_2}^*B$ onto $B$. Then,
	\begin{multline*}
		\phi_{w_1}(x,y,z)=w_1(x,y)(z)=f([x,y]_{w_1},z)=f([\varphi(x),\varphi(y)]_{w_2},\varphi(z))=\\
		f([\sigma(x),\sigma(y)]_{w_2},\sigma(z)) =w_2(\sigma(x),\sigma(y)(\sigma(z)) = \phi_2(\sigma(x),\sigma(y),\sigma(z)).
	\end{multline*}
	Thus $\phi_{w_1}$ and $\phi_{w_1}$ are equivalent. On the contrary, if $\sigma \in \text{GL}(B)$ such that $\phi_{w_1}(x,y,z) = \phi_{w_2}(\sigma(x),\sigma(y),\sigma(z))$,  $w_1(b_1,b_2)\circ \sigma^{-1}=w_2(\sigma(b_1), \sigma(b_2))$ follows easily, and the map
	\begin{align*}
		\varphi\colon T_{w_1}B = B \oplus B^* & \to T_{w_2}B = B \oplus B^*                 \\
		b + \beta                             & \mapsto \sigma(b) + \beta \circ \sigma^{-1}
	\end{align*}
	is an isometric isomorphism.
\end{proof}

\begin{corollary}
	The map $\Delta$ defined in equation~\eqref{eq:biy_coc_tri} provides a natural bijection between isomorphism classes of reduced quadratic 2-step nilpotent Lie algebras of dimension $2n$ and the equivalence classes of trivectors of rank $n$.
\end{corollary}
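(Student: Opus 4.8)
The plan is to obtain the corollary by composing two facts already available: Corollary~\ref{cor:duong}, which says that (up to isometric isomorphism) the reduced quadratic $2$-step nilpotent Lie algebras of dimension $2n$ are exactly the $T^*$-extensions $T^*_wB$ with $B$ abelian of dimension $n$ and $w$ a non-degenerate cyclic $2$-cocycle, and Theorem~\ref{thm:trivector-classification}, which transports the $T^*$-side to trivectors. First I would make the intrinsic recipe behind $\Delta$ explicit: to a reduced quadratic $2$-step $(A,\varphi)$ of dimension $2n$ one attaches the form $\phi$ on $V:=A/A^2$ given by $\phi(\bar x,\bar y,\bar z)=\varphi([x,y],z)$. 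Skew-symmetry of the bracket and invariance of $\varphi$ make $\phi$ alternating; it is well defined because $A^2=Z(A)=(A^2)^\perp$ by \eqref{eq:centre_orto_der}, so the $A^2$-component of $z$ contributes nothing; and $\ker\phi=Z(A)/A^2=0$ since $A$ is reduced, hence $\rank\phi=\dim V=n$. Writing $A\cong T^*_wB$ with $B$ abelian as in Corollary~\ref{cor:duong}, this $\phi$ is exactly $\phi_w=\Delta(w)$ from \eqref{eq:biy_coc_tri}, so the intrinsic recipe and $\Delta$ agree.

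Next I would verify that the induced map on classes is a bijection. For surjectivity, given a rank-$n$ trivector $\phi$ on an $n$-dimensional space $V$, put $B=V$ with the abelian bracket and $w=\Delta^{-1}(\phi)$: then $w$ is skew and cyclic because $\phi$ is alternating, the $2$-cocycle identity is vacuous since $B$ is abelian, and $w$ is non-degenerate by Theorem~\ref{thm:trivector-classification}(a) because $\rank\phi=\dim B$; hence $T^*_wB$ is, by Corollary~\ref{cor:reduced}, a reduced quadratic $2$-step Lie algebra of dimension $2n$ mapping to $[\phi]$. For injectivity, suppose $(A_0,\varphi_0)$ and $(A_1,\varphi_1)$ are reduced quadratic $2$-step of dimension $2n$ whose trivectors are equivalent. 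By Corollary~\ref{cor:duong} each is isometrically isomorphic to some $T^*_{w_i}B_i$ with $B_i$ abelian of dimension $n$ and $w_i$ non-degenerate; all $n$-dimensional abelian Lie algebras being isomorphic, transporting $w_1$ along an isomorphism $B_1\to B_0$ produces an isometric isomorphism $T^*_{w_1}B_1\cong T^*_{w_1'}B_0$, so we may assume $B_0=B_1=B$. Then $\phi_{w_0}$ and $\phi_{w_1}$ are equivalent trivectors, and Theorem~\ref{thm:trivector-classification}(b) yields an isometric isomorphism $T^*_{w_0}B\cong T^*_{w_1}B$, whence $(A_0,\varphi_0)\cong(A_1,\varphi_1)$. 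That the map is well defined on isomorphism classes, and natural, is immediate from the intrinsic recipe, since an isometric isomorphism induces a linear isomorphism $A_0/A_0^2\to A_1/A_1^2$ carrying one trivector onto the other.

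The computations involved are routine: the cyclicity and cocycle bookkeeping for $w$ over an abelian base, and the transport of a $T^*$-extension $T^*_wB$ along an isomorphism $\tau$ of the base, realised by $\tau\oplus\tau^{-*}$, which preserves $q_B$ and sends $w$ to a non-degenerate cyclic cocycle. The one point needing care — which I would flag explicitly — is that ``isomorphism of quadratic Lie algebras'' is read here as isometric isomorphism, i.e.\ an isomorphism of the pair $(A,\varphi)$; this matches the convention used throughout the paper and in Theorem~\ref{thm:trivector-classification}, and it is what makes the correspondence a genuine bijection rather than merely a surjection, a reduced $2$-step Lie algebra possibly carrying invariant forms that are not mutually isometric. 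Granting this reading, there is no real obstacle: the entire content is delivered by Corollaries~\ref{cor:duong} and~\ref{cor:reduced} together with Theorem~\ref{thm:trivector-classification}.
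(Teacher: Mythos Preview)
Your proposal is correct and follows exactly the route the paper intends: the paper states this corollary without proof (merely citing \cite{noui1997algebres}), treating it as the immediate combination of Corollary~\ref{cor:duong} with Theorem~\ref{thm:trivector-classification}, and your argument spells out precisely that combination, including the well-definedness, surjectivity, and injectivity checks the paper leaves implicit. Your explicit intrinsic description $\phi(\bar x,\bar y,\bar z)=\varphi([x,y],z)$ on $A/A^2$ and your care in flagging that ``isomorphism'' means isometric isomorphism are useful additions not made explicit in the paper.
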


This result has been established in~\cite[3.5 Th\'eor\`eme]{noui1997algebres} and it is quite useful as classification tables for trivectors are available. In order to simplify notation, from now on, trivector $e_i^*\wedge e_j^* \wedge e_k^*$ will be denoted as $ijk$. So $123+456 \longleftrightarrow e_1^*\wedge e_2^* \wedge e_3^*+e_4^*\wedge e_5^* \wedge e_6^*$.

The fact each quadric reduced $2$-step Lie algebra can be associated to a trivector and vice versa means their classification, thanks to Theorem~\ref{thm:trivector-classification}, is equivalent to the trivectors one. This allows us to obtain a list of these algebras, as trivectors have been already classified for low dimensions. A nice classification up to dimension 9, over the complex field $\mathbb{C}$, appears in \cite{Vinberg_Elashvili_1988} by using a $\mathbb{Z}_3$-grading of the simple Lie algebra $\mathfrak{e}_8$. Cohen and Helminck (see~\cite{cohen1988trilinear}) classify trivectors up to dimension 7 over fields of cohomological dimension at most $1$, which includes algebraically closed fields and finite fields.

Over the complex field, we can know how many reduced quadratic 2-step Lie algebras are there up to isometrically isomorphisms using less than 9 generators. This data is showed in Table~\ref{tab:2step}, where the dimension $2n\leq 18$ of the Lie algebra is related to the rank of the corresponding trivector, which is just $n\leq 9$.
\begin{table}
	\tbl{Non-isometric reduced quadratic 2-step Lie algebras in $\mathbb{C}$ (source~\cite{Vinberg_Elashvili_1988}).}
	{\begin{tabular}{p{0.1\textwidth}*{7}{>{\centering\arraybackslash}p{0.06\textwidth}}}\toprule
			Dimension & 6 & 8 & 10 & 12 & 14 & 16 & $\geq 18$ \\\midrule
			Number    & 1 & 0 & 1  & 2  & 5  & 13 & $\infty$  \\\bottomrule
		\end{tabular}}
	\label{tab:2step}
\end{table}

Moreover, we are also able to give a representative of each of these algebras and find its multiplication table. Along the following list we consider a quadratic algebra $(A, f)$ of dimension $2n$ with basis $\{e_1, \ldots, e_n, e_1^*, \ldots, e_n^*\}$, where $A^2 = \spa\langle e_1^*, \ldots, e_n^*\rangle$, $f(e_i,e_j) = f(e_i^*,e^*_j) = 0$, and $f(e_i, e_j^*) = \delta_{ij}$. Each algebra receives a name of the form $\fL_{n,k}$, where $n$ is the type, also half the dimension, and $k$ is the position it occupies in the list among all algebras of the same type/dimension. In addition, to simplify the list we only show non-zero products of the form $[e_i, e_j]$ where $i<j$. According to the map $\Delta$ described in equation~\eqref{eq:biy_coc_tri}, the rule to display the different multiplication tables is given by the coordinates of the trivectors $\phi_w=\sum w_{ijk}e_i^*\wedge e_j^*\wedge e_k^*$, so
\begin{equation}\label{eq:process_trivector_algebra}
	[e_i,e_j]=w_{ijk}e_k^* \Longleftrightarrow \phi_w(e_i,e_j,e_k)=w_{ijk}=w(e_i,e_j)(e_k).
\end{equation}
In this way, any $2$-step quadratic reduced Lie algebras of dimension less than 17 over the complex field up to isometric isomorphisms is given in the following list:

\begin{itemize}
	\item One 6-dimensional algebra:
	      \begin{itemize}
		      \item Algebra $\fL_{3,1}$ associated to trivector $123$:
		            \begin{alignat*}{3}
			            [e_1,e_2] & = e^*_3,\qquad & [e_1,e_3] & = -e^*_2,\qquad & [e_2,e_3] & = e^*_1.
		            \end{alignat*}
	      \end{itemize}

	\item One 10-dimensional algebra:
	      \begin{itemize}
		      \item Algebra $\fL_{5,1}$ associated to trivector $123+145$:
		            \begin{alignat*}{3}
			            [e_1,e_2] & = e^*_3,\qquad  & [e_1,e_3] & = -e^*_2,\qquad & [e_1,e_4] & = e^*_5, \\
			            [e_1,e_5] & = -e^*_4,\qquad & [e_2,e_3] & = e^*_1,\qquad  & [e_4,e_5] & = e^*_1.
		            \end{alignat*}
	      \end{itemize}

	\item Two 12-dimensional algebras:
	      \begin{itemize}
		      \item Algebra $\fL_{6,1}$ associated to trivector $123+456$:
		            \begin{alignat*}{3}
			            [e_1,e_2] & = e^*_3,\qquad & [e_1,e_3] & = -e^*_2,\qquad & [e_2,e_3] & = e^*_1, \\
			            [e_4,e_5] & = e^*_6,\qquad & [e_4,e_6] & = -e^*_5,\qquad & [e_5,e_6] & = e^*_4.
		            \end{alignat*}

		      \item Algebra $\fL_{6,2}$ associated to trivector $124+135+236$:
		            \begin{alignat*}{3}
			            [e_1,e_2] & = e^*_4,\qquad  & [e_1,e_3] & = e^*_5,\qquad & [e_1,e_4] & = -e^*_2, \\
			            [e_1,e_5] & = -e^*_3,\qquad & [e_2,e_3] & = e^*_6,\qquad & [e_2,e_4] & = e^*_1,  \\
			            [e_2,e_6] & = -e^*_3,\qquad & [e_3,e_5] & = e^*_1,\qquad & [e_3,e_6] & = e^*_2.
		            \end{alignat*}
	      \end{itemize}

	\item Five 14-dimensional algebras:
	      \begin{itemize}
		      \item Algebra $\fL_{7,1}$ associated to trivector $123+145+167$:
		            \begin{alignat*}{3}
			            [e_1,e_2] & = e^*_3,\qquad  & [e_1,e_3] & = -e^*_2,\qquad & [e_1,e_4] & = e^*_5,  \\
			            [e_1,e_5] & = -e^*_4,\qquad & [e_1,e_6] & = e^*_7,\qquad  & [e_1,e_7] & = -e^*_6, \\
			            [e_2,e_3] & = e^*_1,\qquad  & [e_4,e_5] & = e^*_1,\qquad  & [e_6,e_7] & = e^*_1.
		            \end{alignat*}

		      \item Algebra $\fL_{7,2}$ associated to trivector $127+134+256$:
		            \begin{alignat*}{3}
			            [e_1,e_2] & = e^*_7,\qquad  & [e_1,e_3] & = e^*_4,\qquad & [e_1,e_4] & = -e^*_3, \\
			            [e_1,e_7] & = -e^*_2,\qquad & [e_2,e_5] & = e^*_6,\qquad & [e_2,e_6] & = -e^*_5, \\
			            [e_2,e_7] & = e^*_1,\qquad  & [e_3,e_4] & = e^*_1,\qquad & [e_5,e_6] & = e^*_2.
		            \end{alignat*}

		      \item Algebra $\fL_{7,3}$ associated to trivector $125+136+147+234$:
		            \begin{alignat*}{4}
			            [e_1,e_2] & = e^*_5, \qquad & [e_1,e_3] & = e^*_6, \qquad & [e_1,e_4] & = e^*_7, \qquad & [e_1,e_5] & = -e^*_2, \\
			            [e_1,e_6] & = -e^*_3,\qquad & [e_1,e_7] & = -e^*_4,\qquad & [e_2,e_3] & = e^*_4, \qquad & [e_2,e_4] & = -e^*_3, \\
			            [e_2,e_5] & = e^*_1, \qquad & [e_3,e_4] & = e^*_2, \qquad & [e_3,e_6] & = e^*_1, \qquad & [e_4,e_7] & = e^*_1.
		            \end{alignat*}

		      \item Algebra $\fL_{7,4}$ associated to trivector $125+137+247+346$:
		            \begin{alignat*}{4}
			            [e_1,e_2] & = e^*_5, \qquad & [e_1,e_3] & = e^*_7,\qquad & [e_1,e_5] & = -e^*_2,\qquad & [e_1,e_7] & = -e^*_3, \\
			            [e_2,e_4] & = e^*_7, \qquad & [e_2,e_5] & = e^*_1,\qquad & [e_2,e_7] & = -e^*_4,\qquad & [e_3,e_4] & = e^*_6,  \\
			            [e_3,e_6] & = -e^*_4,\qquad & [e_3,e_7] & = e^*_1,\qquad & [e_4,e_6] & = e^*_3, \qquad & [e_4,e_7] & = e^*_2.
		            \end{alignat*}

		      \item Algebra $\fL_{7,5}$ associated to trivector $123+147+257+367+456$:
		            \begin{alignat*}{4}
			            [e_1,e_2] & = e^*_3, \qquad & [e_1,e_3] & = -e^*_2,\qquad & [e_1,e_4] & = e^*_7, \qquad & [e_1,e_7] & = -e^*_4, \\
			            [e_2,e_3] & = e^*_1, \qquad & [e_2,e_5] & = e^*_7, \qquad & [e_2,e_7] & = -e^*_5,\qquad & [e_3,e_6] & = e^*_7,  \\
			            [e_3,e_7] & = -e^*_6,\qquad & [e_4,e_5] & = e^*_6, \qquad & [e_4,e_6] & = -e^*_5,\qquad & [e_4,e_7] & = e^*_1,  \\
			            [e_5,e_6] &= e^*_4, \qquad &[e_5,e_7] &= e^*_2, \qquad &[e_6,e_7] &= e^*_3.
		            \end{alignat*}
	      \end{itemize}

	\item Thirteen 16-dimensional algebras:
	      \begin{itemize}
		      \item Algebra $\fL_{8,1}$ associated to trivector $156+178+234$:
		            \begin{alignat*}{3}
			            [e_1,e_5] & = e^*_6,\qquad  & [e_1,e_6] & = -e^*_5,\qquad & [e_1,e_7] & = e^*_8,  \\
			            [e_1,e_8] & = -e^*_7,\qquad & [e_2,e_3] & = e^*_4,\qquad  & [e_2,e_4] & = -e^*_3, \\
			            [e_3,e_4] & = e^*_2,\qquad  & [e_5,e_6] & = e^*_1,\qquad  & [e_7,e_8] & = e^*_1.
		            \end{alignat*}

		      \item Algebra $\fL_{8,2}$ associated to trivector $127+138+145+236$:
		            \begin{alignat*}{4}
			            [e_1,e_2] & = e^*_7, \qquad & [e_1,e_3] & = e^*_8, \qquad & [e_1,e_4] & = e^*_5,\qquad & [e_1,e_5] & = -e^*_4, \\
			            [e_1,e_7] & = -e^*_2,\qquad & [e_1,e_8] & = -e^*_3,\qquad & [e_2,e_3] & = e^*_6,\qquad & [e_2,e_6] & = -e^*_3, \\
			            [e_2,e_7] & = e^*_1, \qquad & [e_3,e_6] & = e^*_2, \qquad & [e_3,e_8] & = e^*_1,\qquad & [e_4,e_5] & = e^*_1.
		            \end{alignat*}

		      \item Algebra $\fL_{8,3}$ associated to trivector $125+137+248+346$:
		            \begin{alignat*}{4}
			            [e_1,e_2] & = e^*_5, \qquad & [e_1,e_3] & = e^*_7,\qquad & [e_1,e_5] & = -e^*_2,\qquad & [e_1,e_7] & = -e^*_3, \\
			            [e_2,e_4] & = e^*_8, \qquad & [e_2,e_5] & = e^*_1,\qquad & [e_2,e_8] & = -e^*_4,\qquad & [e_3,e_4] & = e^*_6,  \\
			            [e_3,e_6] & = -e^*_4,\qquad & [e_3,e_7] & = e^*_1,\qquad & [e_4,e_6] & = e^*_3, \qquad & [e_4,e_8] & = e^*_2.
		            \end{alignat*}

		      \item Algebra $\fL_{8,4}$ associated to trivector $137+168+236+245$:
		            \begin{alignat*}{4}
			            [e_1,e_3] & = e^*_7,\qquad & [e_1,e_6] & = e^*_8,\qquad & [e_1,e_7] & = -e^*_3,\qquad & [e_1,e_8] & = -e^*_6, \\
			            [e_2,e_3] & = e^*_6,\qquad & [e_2,e_4] & = e^*_5,\qquad & [e_2,e_5] & = -e^*_4,\qquad & [e_2,e_6] & = -e^*_3, \\
			            [e_3,e_6] & = e^*_2,\qquad & [e_3,e_7] & = e^*_1,\qquad & [e_4,e_5] & = e^*_2, \qquad & [e_6,e_8] & = e^*_1.
		            \end{alignat*}

		      \item Algebra $\fL_{8,5}$ associated to trivector $134+178+256+278$:
		            \begin{alignat*}{4}
			            [e_1,e_3] & = e^*_4,\qquad & [e_1,e_4] & = -e^*_3,\qquad & [e_1,e_7] & = e^*_8,\qquad & [e_1,e_8] & = -e^*_7, \\
			            [e_2,e_5] & = e^*_6,\qquad & [e_2,e_6] & = -e^*_5,\qquad & [e_2,e_7] & = e^*_8,\qquad & [e_2,e_8] & = -e^*_7, \\
			            [e_3,e_4] &= e^*_1,\qquad &[e_5,e_6] &= e^*_2, \qquad &[e_7,e_8] &= e^*_1 + e^*_2.
		            \end{alignat*}

		      \item Algebra $\fL_{8,6}$ associated to trivector $128+135+147+237+246$:
		            \begin{alignat*}{4}
			            [e_1,e_2] & = e^*_8, \qquad & [e_1,e_3] & = e^*_5, \qquad & [e_1,e_4] & = e^*_7, \qquad & [e_1,e_5] & = -e^*_3, \\
			            [e_1,e_7] & = -e^*_4,\qquad & [e_1,e_8] & = -e^*_2,\qquad & [e_2,e_3] & = e^*_7, \qquad & [e_2,e_4] & = e^*_6,  \\
			            [e_2,e_6] & = -e^*_4,\qquad & [e_2,e_7] & = -e^*_3,\qquad & [e_2,e_8] & = e^*_1, \qquad & [e_3,e_5] & = e^*_1,  \\
			            [e_3,e_7] &= e^*_2, \qquad &[e_4,e_6] &= e^*_2, \qquad &[e_4,e_7] &= e^*_1.
		            \end{alignat*}

		      \item Algebra $\fL_{8,7}$ associated to trivector $127+138+156+246+345$:
		            \begin{alignat*}{4}
			            [e_1,e_2] & = e^*_7, \qquad & [e_1,e_3] & = e^*_8, \qquad & [e_1,e_5] & = e^*_6, \qquad & [e_1,e_6] & = -e^*_5, \\
			            [e_1,e_7] & = -e^*_2,\qquad & [e_1,e_8] & = -e^*_3,\qquad & [e_2,e_4] & = e^*_6, \qquad & [e_2,e_6] & = -e^*_4, \\
			            [e_2,e_7] & = e^*_1, \qquad & [e_3,e_4] & = e^*_5, \qquad & [e_3,e_5] & = -e^*_4,\qquad & [e_3,e_8] & = e^*_1,  \\
			            [e_4,e_5] &= e^*_3, \qquad &[e_4,e_6] &= e^*_2, \qquad &[e_5,e_6] &= e^*_1.
		            \end{alignat*}

		      \item Algebra $\fL_{8,8}$ associated to trivector $136+158+247+258+345$:
		            \begin{alignat*}{4}
			            [e_1,e_3] & = e^*_6, \qquad & [e_1,e_5] & = e^*_8, \qquad & [e_1,e_6] & = -e^*_3,\qquad & [e_1,e_8] & = -e^*_5, \\
			            [e_2,e_4] & = e^*_7, \qquad & [e_2,e_5] & = e^*_8, \qquad & [e_2,e_7] & = -e^*_4,\qquad & [e_2,e_8] & = -e^*_5, \\
			            [e_3,e_4] & = e^*_5, \qquad & [e_3,e_5] & = -e^*_4,\qquad & [e_3,e_6] & = e^*_1, \qquad & [e_4,e_5] & = e^*_3,  \\
			            [e_4,e_7] &= e^*_2, \qquad &[e_5,e_8] &= e^*_1 + e^*_2.
		            \end{alignat*}

		      \item Algebra $\fL_{8,9}$ associated to trivector $145+167+238+246+357$:
		            \begin{alignat*}{4}
			            [e_1,e_4] & = e^*_5,\qquad & [e_1,e_5] & = -e^*_4,\qquad & [e_1,e_6] & = e^*_7, \qquad & [e_1,e_7] & = -e^*_6, \\
			            [e_2,e_3] & = e^*_8,\qquad & [e_2,e_4] & = e^*_6, \qquad & [e_2,e_6] & = -e^*_4,\qquad & [e_2,e_8] & = -e^*_3, \\
			            [e_3,e_5] & = e^*_7,\qquad & [e_3,e_7] & = -e^*_5,\qquad & [e_3,e_8] & = e^*_2, \qquad & [e_4,e_5] & = e^*_1,  \\
			            [e_4,e_6] &= e^*_2,\qquad &[e_5,e_7] &= e^*_3, \qquad &[e_6,e_7] &= e^*_1.
		            \end{alignat*}

		      \item Algebra $\fL_{8,10}$ associated to trivector $128+167+236+247+345$:
		            \begin{alignat*}{4}
			            [e_1,e_2] & = e^*_8, \qquad & [e_1,e_6] & = e^*_7, \qquad & [e_1,e_7] & = -e^*_6,\qquad & [e_1,e_8] & = -e^*_2, \\
			            [e_2,e_3] & = e^*_6, \qquad & [e_2,e_4] & = e^*_7, \qquad & [e_2,e_6] & = -e^*_3,\qquad & [e_2,e_7] & = -e^*_4, \\
			            [e_2,e_8] & = e^*_1, \qquad & [e_3,e_4] & = e^*_5, \qquad & [e_3,e_5] & = -e^*_4,\qquad & [e_3,e_6] & = e^*_2,  \\
			            [e_4,e_5] &= e^*_3, \qquad &[e_4,e_7] &= e^*_2, \qquad &[e_6,e_7] &= e^*_1.
		            \end{alignat*}

		      \item Algebra $\fL_{8,11}$ associated to trivector $128+136+157+247+256+345$:
		            \begin{alignat*}{4}
			            [e_1,e_2] & = e^*_8, \qquad & [e_1,e_3] & = e^*_6, \qquad & [e_1,e_5] & = e^*_7, \qquad & [e_1,e_6] & = -e^*_3, \\
			            [e_1,e_7] & = -e^*_5,\qquad & [e_1,e_8] & = -e^*_2,\qquad & [e_2,e_4] & = e^*_7, \qquad & [e_2,e_5] & = e^*_6,  \\
			            [e_2,e_6] & = -e^*_5,\qquad & [e_2,e_7] & = -e^*_4,\qquad & [e_2,e_8] & = e^*_1, \qquad & [e_3,e_4] & = e^*_5,  \\
			            [e_3,e_5] & = -e^*_4,\qquad & [e_3,e_6] & = e^*_1, \qquad & [e_4,e_5] & = e^*_3, \qquad & [e_4,e_7] & = e^*_2,  \\
			            [e_5,e_6] &= e^*_2, \qquad &[e_5,e_7] &= e^*_1.
		            \end{alignat*}

		      \item Algebra $\fL_{8,12}$ associated to trivector $126+158+238+257+347+456$:
		            \begin{alignat*}{4}
			            [e_1,e_2] & = e^*_6, \qquad & [e_1,e_5] & = e^*_8, \qquad & [e_1,e_6] & = -e^*_2,\qquad & [e_1,e_8] & = -e^*_5, \\
			            [e_2,e_3] & = e^*_8, \qquad & [e_2,e_5] & = e^*_7, \qquad & [e_2,e_6] & = e^*_1, \qquad & [e_2,e_7] & = -e^*_5, \\
			            [e_2,e_8] & = -e^*_3,\qquad & [e_3,e_4] & = e^*_7, \qquad & [e_3,e_7] & = -e^*_4,\qquad & [e_3,e_8] & = e^*_2,  \\
			            [e_4,e_5] & = e^*_6, \qquad & [e_4,e_6] & = -e^*_5,\qquad & [e_4,e_7] & = e^*_3, \qquad & [e_5,e_6] & = e^*_4,  \\
			            [e_5,e_7] &= e^*_2, \qquad &[e_5,e_8] &= e^*_1.
		            \end{alignat*}

		      \item Algebra $\fL_{8,13}$ associated to trivector $123+178+257+368+456+478$:
		            \begin{alignat*}{4}
			            [e_1,e_2] & = e^*_3, \qquad & [e_1,e_3] & = -e^*_2,\qquad & [e_1,e_7] & = e^*_8, \qquad & [e_1,e_8] & = -e^*_7, \\
			            [e_2,e_3] & = e^*_1, \qquad & [e_2,e_5] & = e^*_7, \qquad & [e_2,e_7] & = -e^*_5,\qquad & [e_3,e_6] & = e^*_8,  \\
			            [e_3,e_8] & = -e^*_6,\qquad & [e_4,e_5] & = e^*_6, \qquad & [e_4,e_6] & = -e^*_5,\qquad & [e_4,e_7] & = e^*_8,  \\
			            [e_4,e_8] & = -e^*_7,\qquad & [e_5,e_6] & = e^*_4, \qquad & [e_5,e_7] & = e^*_2, \qquad & [e_6,e_8] & = e^*_3,  \\
			            [e_7,e_8] &= e^*_1 + e^*_4.
		            \end{alignat*}
	      \end{itemize}
\end{itemize}

However, despite there are an infinite number of non-isometrically isomorphic 2-step quadratic Lie algebras of dimension greater or equal than 18, for the 18-dimensional algebras we still have a classification based on seven families of trivectors, where each family depends on some parameters. This classification can be found in \cite{Vinberg_Elashvili_1988}. Here, the authors explain how every trivector can be decompose as a sum of a semisimple trivector and a nilpotent one. Details of this concepts can be found in \S 1 of the paper. That semisimple part is a linear combination of four special trivectors, and it is that specific combination in addition to the nilpotent part what defines each family. Despite this classification involves parameters $\lambda_i$, this does not affect our conversion to 2-step quadratic Lie algebras, and the procedure described in \eqref{eq:process_trivector_algebra} is still functional. We can see it in the following example.
\begin{example}
	Let us take a trivector $u$ in the sixth family, which decomposes as $u = p + e$ with $p = \lambda (123 + 456 + 789) \neq 0$, where $\lambda$ is determined up to multiplication by a sixth root of the unity, and $e$ is in table~\cite[Table 5]{Vinberg_Elashvili_1988}. For example, we consider $e = 147 + 158$. So our trivector is
	\begin{equation*}
		u = \lambda(123 + 456 + 789) + 147 + 158.
	\end{equation*}
	In this case the associated 18-dimensional Lie algebra $\fL$ is defined by products
	\begin{alignat*}{4}
		[e_1,e_2] & = \lambda e^*_3, \qquad  & [e_1,e_3] & = -\lambda e^*_2, \qquad & [e_1,e_4] & = e^*_7, \qquad        & [e_1,e_5] & = e^*_8,         \\
		[e_1,e_7] & = -e^*_4,\qquad          & [e_1,e_8] & = -e^*_5, \qquad         & [e_2,e_3] & = \lambda e^*_1,\qquad & [e_4,e_5] & = \lambda e^*_6, \\
		[e_4,e_6] & = -\lambda e^*_5, \qquad & [e_4,e_7] & = e^*_1,\qquad           & [e_5,e_6] & = \lambda e^*_4,\qquad & [e_5,e_8] & = e^*_	1,         \\
		[e_7,e_8] &= \lambda e^*_9,\qquad &[e_7,e_9] &= -\lambda e^*_8,\qquad &[e_8,e_9] &= \lambda e^*_7,
	\end{alignat*}
	when considering the basis $\{e_1, \ldots, e_9, e^*_1, \ldots, e^*_9\}$.
\end{example}
\begin{remark}
	The idea of a nilpotent or semisimple trivector does not affect the nilpotency of the algebra obtained from it. By construction, the quadratic algebra we obtain is always 2-step.
\end{remark}
\begin{remark}
	In case we are interested in greater dimensions, things start to get much more difficult as there are not a finite number of trivectors-algebras. We can get examples by applying Theorem \ref{thm:reducedFamily} or Lemma~\ref{lem:chain}. These results provides computational constructions based on non-degenerate $n$-quadratic matrices or chained one-dimensional double extensions.
\end{remark}

\section*{Funding}
The authors are partially funded by grant MTM2017-83506-C2-1-P of `Ministerio de Econom\'ia, Industria y Competitividad, Gobierno de Espa\~na' (Spain). J. Rold\'an-L\'opez is also supported by a predoctoral research grant FPI-2018 of `Universidad de La Rioja'.

\bibliographystyle{tfnlm}
\bibliography{bibliography}

\end{document}